\documentclass[12pt]{amsart}
\usepackage[top=35mm, bottom=32mm, left=30mm, right=30mm]{geometry}
\usepackage{amsmath,amssymb,amsthm,mathtools}
\usepackage{enumitem}
\usepackage{microtype}
\usepackage[hidelinks]{hyperref}
\usepackage{mathrsfs}
\usepackage{bm}

\usepackage{aliascnt}

\newtheorem{theorem}{Theorem}[section]
\newaliascnt{proposition}{theorem}
\newtheorem{proposition}[proposition]{Proposition}
\aliascntresetthe{proposition}
\newaliascnt{lemma}{theorem}
\newtheorem{lemma}[lemma]{Lemma}
\aliascntresetthe{lemma}
\newaliascnt{corollary}{theorem}
\newtheorem{corollary}[corollary]{Corollary}
\aliascntresetthe{corollary}
\newaliascnt{claim}{theorem}

\aliascntresetthe{claim}
\theoremstyle{definition}
\newaliascnt{definition}{theorem}
\newtheorem{definition}[definition]{Definition}
\aliascntresetthe{definition}
\newaliascnt{remark}{theorem}
\newtheorem{remark}[remark]{Remark}
\aliascntresetthe{remark}

\usepackage[nameinlink,capitalise]{cleveref}
\crefname{theorem}{Theorem}{Theorems}
\crefname{proposition}{Proposition}{Propositions}
\crefname{lemma}{Lemma}{Lemmas}
\crefname{corollary}{Corollary}{Corollaries}
\crefname{claim}{Claim}{Claims}
\crefname{definition}{Definition}{Definitions}
\crefname{remark}{Remark}{Remarks}

\newcommand{\Haar}{m_G}
\newcommand{\vol}{\operatorname{vol}}
\newcommand{\Prob}{\operatorname{Prob}}
\newcommand{\Map}{\operatorname{Map}}
\newcommand{\Mapavg}{\operatorname{Map}_{\mathrm{avg}}}
\newcommand{\Sep}{\operatorname{Sep}}
\newcommand{\PPP}{\operatorname{PPP}}
\newcommand{\cN}{\mathcal N}
\newcommand{\cB}{\mathcal B}
\newcommand{\cO}{\mathcal O}
\newcommand{\1}{\mathbf 1}
\newcommand{\eps}{\varepsilon}
\newcommand{\dbar}{\overline{\vol}}
\newcommand{\dd}{\,d}
\newcommand{\norm}[1]{\left\lVert#1\right\rVert}
\newcommand{\abs}[1]{\left\lvert#1\right\rvert}

\title[Poisson actions and completely positive sofic entropy]{Poisson actions of noncompact locally compact sofic groups have completely positive entropy}

\author{Zhuowei Liu}
\date{\today}
\subjclass[2020]{37A35; 37A15}

\address[Z. Liu]{School of Mathematics (Zhuhai), Sun Yat-sen University,
	Zhuhai, Guangdong, 519000, P.R. China}
\email{liuzhw55@mail2.sysu.edu.cn}
\keywords{Locally compact sofic groups; Poisson actions; completely positive entropy}
\begin{document}

\begin{abstract}
In this paper, we use completed root views to study measure sofic entropy of
Poisson actions of locally compact sofic groups. We prove that, for every
noncompact locally compact second countable sofic group and every locally
compact sofic approximation, each positive-intensity Poisson action has
completely positive measure sofic entropy. 
We also compare the averaged and pointwise spatial formulas with Singh's entropy
and show that noncompactness is necessary for the theorem as stated.
\end{abstract}

\maketitle

\section{Introduction}

Completely positive entropy is stronger than positive entropy.
A probability-measure-preserving action has completely positive entropy when
each nontrivial factor has positive entropy. So this condition rules out
zero-entropy structure both in the original action and in all its factors.
In the classical case of a single transformation, completely
positive entropy is equivalent, by the Rokhlin--Sinai theorem, to the
$K$-property and is closely tied to strong mixing \cite{Glasner}. For countable amenable group actions, it also has strong mixing consequences
\cite{RudolphWeiss}. Bernoulli actions are the basic examples. The
Ornstein--Weiss theory shows that factors of Bernoulli actions of countably
infinite amenable groups are again Bernoulli, and so such actions have
completely positive entropy \cite{OrnsteinWeiss}.

Bowen's sofic entropy extends measure entropy beyond amenable groups by
counting finite models associated with a sofic approximation
\cite{BowenSofic,B12,B20,KerrLi1}. Kerr and Li developed an operator-algebraic approach to  Bowen’s
sofic measure entropy \cite{KerrLi2}. In this setting, the entropy depends on the chosen approximation. The
amenable theory of Bernoulli factors cannot be used to prove complete
positivity. But Kerr proved that every Bernoulli action of a countable sofic group has completely positive
entropy with respect to every sofic approximation \cite{KerrCPE}. The main
point of his argument is a positive lower bound for the local entropy of a
nontrivial partition which is uniform over all sufficiently good finite
models. This shows that complete positivity can be proved directly at the
level of arbitrary factors, without first showing that those factors are Bernoulli.
Austin and Burton later studied how this result is related to uniform mixing \cite{AustinBurton}.

Entropy theory for locally compact second countable groups requires a
different kind of finite model. Singh introduced an operator-algebraic theory
of locally compact sofic entropy and proved a spatial formula on compact
models \cite[Chapter~3]{SinghThesis}. Bowen and Burton later defined local
compact soficity using finite-volume local $G$-spaces
\cite{BowenBurton}, and Bowen developed the corresponding spatial entropy
theory, including measure-conjugacy invariance and the variational principle
\cite{BowenLCEntropy}. Instead of maps from a finite set labelled by group
elements, one now studies approximately equivariant maps from a finite-volume
space on which the group law is defined only locally. This local geometry is
exactly what makes the Poisson case both natural and technically different
from the discrete Bernoulli case.

Let $G$ be a locally compact sofic group and
$\Haar$ a Haar measure on $G$. Let $\cN(G)$ be the space of locally finite
integer-valued Radon measures on $G$, and, for $\alpha>0$, let
\[
 \mu_\alpha:=\PPP(\alpha\Haar)
\]
be the law of the Poisson point process of intensity $\alpha\Haar$. We use the
left action by right shifts
\[
 (g\omega)(A)=\omega(Ag),\qquad g\in G.
\]

Poisson actions are the continuous-group versions of Bernoulli shifts:
independence is indexed by disjoint regions rather than by individual group
coordinates. This analogy is exact for a countable discrete group equipped
with counting Haar measure. In fact,
\[
\omega\longmapsto \bigl(\omega(\{g\})\bigr)_{g\in G}
\]
identifies $(\cN(G),\mu_\alpha)$ with the Bernoulli shift having base law
$\operatorname{Pois}(\alpha)$. So Kerr's theorem settles the discrete case.
But for nondiscrete groups, the Poisson process has no
coordinatewise product presentation and must be modeled through local charts.

Singh proved that positive-intensity Poisson actions of nondiscrete locally
compact sofic groups have infinite measure sofic entropy
\cite[Chapter~5]{SinghThesis}. This computation concerns the full action.
Complete positivity is stronger: even an action of infinite
entropy may have a nontrivial zero-entropy factor. Because of Kerr's theorem
and because Poisson actions are the locally compact analogue of Bernoulli
shifts, Bowen asked whether Poisson actions have completely positive
entropy relative to every locally compact sofic approximation
\cite[Question~4]{BowenLCEntropy}.

In this paper, we answer Bowen's question for every noncompact locally
compact second countable sofic group. The main obstacle is that pulling a
Poisson process back through a local chart produces a truncated
configuration lying in a null set for the target Poisson law, where an
arbitrary measurable factor map is not controlled. We solve this problem by
adding an independent exterior Poisson process. The \emph{completed root view} then
has exactly the target Poisson law at every root
and can be passed through any measurable factor map. The following is the main result.

\begin{theorem}\label{thm:main}
Let $G$ be a noncompact locally compact second countable sofic group, and
let $\Sigma=(M_i)_{i\geq1}$ be a locally compact sofic approximation of
$G$. For $\alpha>0$, let $\mu_\alpha:=\PPP(\alpha\Haar)$
be the law of the Poisson point process on $G$ with intensity measure
$\alpha\Haar$. Then the Poisson action
	\[
	G\curvearrowright(\cN(G),\mu_\alpha),
	\qquad
	(g\omega)(A)=\omega(Ag),
	\]
	has completely positive measure sofic entropy with respect to $\Sigma$.
	In other words, every nontrivial measurable factor
	\[
	\pi:(\cN(G),\mu_\alpha)\longrightarrow(Z,\zeta)
	\]
	satisfies
	\[
	h^{\mathrm{meas}}_\Sigma
	\bigl(G\curvearrowright(Z,\zeta)\bigr)>0.
	\]
\end{theorem}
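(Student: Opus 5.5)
We follow Kerr's strategy for Bernoulli actions \cite{KerrCPE}: we look for a uniform positive lower bound on the local entropy of a nontrivial observable of the factor, valid over all sufficiently good finite models, and we adapt it to the locally compact setting through the completed root views described in the introduction. Fix a nontrivial factor $\pi:(\cN(G),\mu_\alpha)\to(Z,\zeta)$. Because $\zeta$ is not a point mass, there is a Borel set $B\subseteq Z$ with $0<\zeta(B)<1$. Put $A:=\pi^{-1}(B)$ and $f:=\1_A$. By regularity of $\mu_\alpha$ we approximate $A$ in measure by a cylinder set $A'$, that is, an event depending only on $\omega|_K$ for a compact $K\subseteq G$, with $\mu_\alpha(A\triangle A')<\delta$. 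It is enough to show that the observable $f\circ$ (root view) has a spread of approximately equivariant models whose count grows exponentially in $\vol(M_i)$ at a rate bounded below in terms of $\zeta(B)(1-\zeta(B))$ and $\alpha$ alone.

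\textbf{Step 1 (random finite models).} On a finite-volume local $G$-space $M_i$, let $\Omega$ be a Poisson process of intensity $\alpha\vol$. For a root $x\in M_i$ whose $R$-neighbourhood is a good chart, pulling $\Omega$ back gives a configuration on a ball $B_R\subseteq G$. We complete it by adding an independent Poisson process on $G\setminus B_R$, which produces the completed root view $\widehat\omega_x$. By the superposition property its law is exactly $\mu_\alpha$, so $\pi(\widehat\omega_x)$ is defined almost surely and has law $\zeta$. The model map is then $\phi(x):=\pi(\widehat\omega_x)$. \textbf{Step 2 (approximate equivariance and empirical law).} Because $A'$ is determined by $\omega|_K$ and $K\subseteq B_R$, the exterior completion affects $\1_{A'}(\widehat\omega_x)$ only through the error $\delta$. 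A second-moment or concentration argument then shows that with high probability $\phi$ is a good averaged model: local equivariance holds up to $O(\delta)$ on most roots, and the pushforward of $\vol$ under $x\mapsto\widehat\omega_x$ is close to $\mu_\alpha$ in the weak$^*$ sense on the relevant observables. Here the averaged spatial formula from the earlier sections lets us pass to $h^{\mathrm{meas}}_\Sigma$.

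\textbf{Step 3 (counting, the main obstacle).} We need many $\eps$-separated good models. Since $G$ is noncompact, $M_i$ contains about $c\,\vol(M_i)$ roots with pairwise disjoint $K$-windows. On those roots the values $\1_{A'}(\widehat\omega_x)$ are independent Bernoulli$(\approx\zeta(B))$ variables, so the random model has $\Omega(\vol(M_i))$ bits of independent randomness. A Kerr-type argument then applies: any fixed model ball of radius $\eps$ captures only probability $e^{-c'\vol(M_i)}$, where $c'>0$ depends only on $\zeta(B)$ and $c$ by a Hoeffding/binomial estimate. Since the random model is good with probability near one, the number of separated good models is at least $e^{c'\vol(M_i)}$, which gives $h^{\mathrm{meas}}_\Sigma\ge c'>0$. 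The delicate point is making this uniform. The disjoint-window packing density and the chart quality have to be controlled in every sufficiently good $M_i$, and the observable must be read through the factor, where only the cylinder approximation gives locality. This approximation error $\delta$ has to be chosen smaller than the binomial gap. Noncompactness is used exactly here: for compact $G$ the windows cannot be disjoint at a positive density that grows with volume.
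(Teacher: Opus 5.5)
Your architecture matches the paper's: completed root views with exact marginal $\mu_\alpha$, a finite-window approximation of a factor label, exponential concentration, and probabilistic counting. Replacing the paper's two-copy collision bound (\cref{prop:collision}) with a fixed-centre small-ball bound would be a legitimate variant. But Step~3, as written, does not produce $\rho_Y^{M_i}$-separated microstates.

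First, a Borel set $B$ with $0<\zeta(B)<1$ gives no metric separation. Points of $B$ and of its complement can be arbitrarily close in the compact model $Y$, so label disagreement gives no lower bound on $\rho_Y$. Nor can you choose a model in which $B$ is clopen. In a jointly continuous compact model a clopen set is invariant under a neighbourhood of $e$, hence under the identity component. So for connected $G$ it would be invariant, and by ergodicity of measure $0$ or $1$. You need two compact sets $C_0,C_1$ with $\nu(C_j)=p_j>0$ at distance $\kappa>0$ (\cref{lem:separated-compacts}). For a fixed centre $s$ you then pick $j(p)$ with $\operatorname{dist}(s(p),C_{j(p)})\geq\kappa/2$, and take $\eps$ of order $\kappa\min(p_0,p_1)$.

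Second, for nondiscrete $G$, values at finitely many roots carry zero volume, so they do not control the integral $\rho_Y^{M_i}$. You need a volume-averaged local statistic. The paper gets its $e^{-cV_i}$ tail by integrating over the deep core $D_i$. It combines the overlap bound $\vol_{M_i}\{p\in D_i:z\in K.p\}\leq\Haar(K)$ (\cref{lem:overlap}) with a Poisson bounded-differences inequality (\cref{lem:poisson-bd}), so no disjoint-window packing is needed.

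Third, the local-label error is not a deterministic perturbation that can be ``chosen smaller than the binomial gap''. In the paper's notation, let $R_i$ be the fraction of roots where the local label $b(\Phi_i(p))$ differs from the true label $a(\Psi_i(p))$. It satisfies only $\mathbb E R_i<\eta$. Because $\pi$ is not local and the roots share common exterior randomness, these errors can be strongly correlated, and $R_i$ need not concentrate. So your unconditional claim that every fixed $\eps$-ball has probability at most $e^{-c'V_i}$ does not follow. The repair is to count only on the event $\{\Psi_i\in\mathscr M_i\}\cap\{R_i\leq\sqrt\eta\}$, whose probability is bounded below by Markov's inequality. You then bound $\mathbb P(\rho_Y^{M_i}(\Psi_i,s)\leq\eps,\ R_i\leq\sqrt\eta)$ exponentially, which is exactly what the paper's conditioning on $\mathcal G_i$ achieves.

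Finally, Step~2 controls equivariance only through the single set $A'$. The microstate condition, however, concerns $\rho_Y(\Psi_i(g.p),g\Psi_i(p))$ for the whole factor map. The paper gets this from four ingredients:
\begin{enumerate}
\item the exact identity $\Phi_i(g.p)|_K=(g\Phi_i(p))|_K$ on $D_i$ (\cref{prop:local-covariance});
\item a Lusin compact set on which $\pi$ is uniformly continuous;
\item a metric on $\cN(G)$ with the locality property \eqref{eq:local-metric};
\item the fact that $\Phi_i(g.p)$ and $g\Phi_i(p)$ both have law $\mu_\alpha$.
\end{enumerate}
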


The exact law of the completed root views gives well-defined random models
on every measurable factor. Local covariance and Poisson concentration,
followed by a packing argument, produce exponentially many separated factor
microstates. We also prove the equivalence of the averaged and pointwise
spatial entropy formulas and compare them with Singh's entropy.
Noncompactness is essential: it gives
\[
\vol_{M_i}(M_i)\longrightarrow\infty,
\]
while for a nontrivial compact group the total number of Poisson points
defines a nontrivial invariant factor.

\medskip
\noindent\textbf{Organization of the paper.}
Section~2 recalls the spatial entropy formalism and proves the equivalence of
averaged and pointwise equivariance. Section~3 collects the geometric
properties of locally compact sofic models. Section~4 constructs the completed
root views and proves their exact marginal and local covariance properties.
Section~5 proves exponential concentration for local Poisson observables.
Sections~6--7 pass the construction to arbitrary measurable factors and prove
the separation. Section~8 proves  packing estimates and  the main theorem.
Section~9 gives the compact-group obstruction. The final section proves the
invariance statements used above and compares the spatial formula with Singh's
entropy.

\section{Spatial measure sofic entropy on compact models}\label{sec:compact-entropy}

\subsection{Standard probability-measure-preserving actions}
\begin{definition}[Standard pmp action]
	Let $G$ be a locally compact second countable group.
	A standard probability-measure-preserving (pmp) action of $G$ is a
	Borel action
	\[
	G\times X\to X,\qquad (g,x)\mapsto gx,
	\]
	on a standard probability space $(X,\mu)$ such that
$\mu(gA)=\mu(A)$
	for every $g\in G$ and every Borel set $A\subset X$.	
	In other words, the probability measure $\mu$ is invariant under the
	action of $G$.
\end{definition}
We use the abbreviation lcsc to mean locally compact second
countable.
\begin{definition}[Measurable factor]
	Let
	$G\curvearrowright(X,\mu)$,
	$G\curvearrowright(Y,\nu)$
	be standard pmp actions. A measurable \emph{factor map} is a measurable map
	$\pi:X\to Y$
	such that
$\pi_*\mu=\nu$ (where $\pi_*\mu$ is the measure on
$Y$ defined by $\pi_*\mu(A)=\mu (\pi^{-1}A)$)
	and
$\pi(gx)=g\pi(x)$
	for every $g\in G$ and for $\mu$-almost every $x\in X$; a \emph{measure conjugacy} is a factor map for which there exist invariant
	conull Borel sets $X_0\subseteq X$ and $Y_0\subseteq Y$ such that
	$\pi|_{X_0}:X_0\to Y_0$ is a Borel bijection with Borel inverse and
	intertwines the two actions on these conull sets.

\end{definition}

\begin{definition}
A \emph{compact model} of a standard pmp action $G\curvearrowright(Z,\zeta)$
consists of a compact metrizable space $Y$, a jointly continuous action
$G\curvearrowright Y$, an invariant Borel probability measure $\nu$, and a
measure conjugacy between $(Z,\zeta)$ and $(Y,\nu)$.
\end{definition}

\begin{proposition}[Existence of compact models]\label{prop:compact-model}\cite{Var63}
Every standard pmp action of an lcsc group admits a compact metrizable
continuous model.
\end{proposition}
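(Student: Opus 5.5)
The plan is to follow Varadarajan's argument: realize the action inside a compact metrizable $G$-space built from a countable family of bounded measurable functions, chosen so that the action on that space is jointly continuous. Let $G\curvearrowright(Z,\zeta)$ be a standard pmp action.

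\textbf{Step 1: a separable algebra.} Pass to $L^\infty(Z,\zeta)$ with the weak$^*$ topology. Choose a countable family $\{f_n\}$ of bounded Borel functions that generates the Borel $\sigma$-algebra modulo null sets. Replace each $f_n$ by a convolution
\[
 \phi*f_n(z)=\int_G \phi(g)\,f_n(g^{-1}z)\dd\Haar(g),
\]
where $\phi$ ranges over a countable approximate identity in $C_c(G)$. Because the action is jointly measurable and $G$ is second countable, each such convolution is a bounded Borel function. Moreover, $g\mapsto (\phi*f_n)\circ g^{-1}$ is norm-continuous into the space of bounded Borel functions: translating $\phi$ changes the integral by at most $\|\lambda_g\phi-\phi\|_{L^1}\|f_n\|_\infty$, which tends to $0$ as $g\to e$. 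The resulting functions still separate the $\sigma$-algebra modulo null sets, since $\phi*f_n\to f_n$ in $L^2$ along the approximate identity.

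\textbf{Step 2: the Gelfand space.} Let $\mathcal A$ be the unital C$^*$-algebra of bounded Borel functions generated by the $G$-translates of these convolutions. It is separable: by the norm continuity from Step 1, translates by a countable dense subset of $G$ suffice. The algebra $\mathcal A$ is $G$-invariant, and $G$ acts on it by norm-continuous automorphisms. Let $Y$ be its Gelfand spectrum, which is compact metrizable because $\mathcal A$ is separable. The dual action $G\curvearrowright Y$ is jointly continuous, since the action on $\mathcal A$ is strongly continuous and is isometric. The evaluation map $\pi:Z\to Y$, $z\mapsto \mathrm{ev}_z$, is Borel and $G$-equivariant, and we set $\nu:=\pi_*\zeta$, which is invariant.

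\textbf{Step 3: conjugacy.} It remains to show that $\pi$ is a measure conjugacy. Because $\mathcal A$ separates points of a conull invariant Borel set $Z_0$, the map $\pi$ is injective there. Getting point separation rather than only separation modulo null sets requires discarding null sets. We make $Z_0$ invariant by taking the set of $z$ such that $gz$ lies in the good set for $\Haar$-a.e.\ $g$, and using Fubini. By Lusin--Souslin, $\pi(Z_0)$ is Borel and $\pi|_{Z_0}$ is a Borel isomorphism onto its image. The equivariance of $\pi$ then transfers to the inverse.

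The main obstacle is Step 3, specifically the invariant conull set. It needs Mackey's point-realization style argument: first replace the action by a near-action, then obtain a Borel model. This is the one genuinely delicate point of the argument, and it is exactly the content of \cite{Var63}. We cite that result for this step rather than reprove it.
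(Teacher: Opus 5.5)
Your sketch is correct and follows the convolution-and-Gelfand-spectrum argument behind the result the paper cites; the paper gives no proof of its own. The point to revise is your assessment of Step~3: it needs no point-realization or near-action argument. The paper's standard pmp actions are genuine Borel actions, and your $\mathcal A$ consists of genuine bounded Borel functions with the sup norm, closed under honest translations. So $\pi(hz)=h\pi(z)$ holds for every $h\in G$ and every $z\in Z$, not only almost everywhere. Point realization becomes an issue only if you work inside $L^\infty(Z,\zeta)$, where points cannot be evaluated.

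With equivariance at every point, your own Fubini saturation finishes the proof:
\begin{itemize}
\item Take a countable point-separating family of Borel sets $B_m\subset Z$, and Borel sets $C_m\subset Y$ with $\zeta(B_m\triangle\pi^{-1}C_m)=0$. These exist because the $\sigma$-algebra generated by $\mathcal A$ is $\pi^{-1}$ of the Borel sets of $Y$, and by Step~1 it is everything modulo null sets.
\item On $Z_1:=Z\setminus\bigcup_m(B_m\triangle\pi^{-1}C_m)$ the map $\pi$ is injective.
\item Put $Z_0:=\{z: gz\in Z_1\text{ for }\Haar\text{-a.e. }g\}$. By Fubini and invariance of $\zeta$, $Z_0$ is Borel, invariant and conull.
\item If $z,z'\in Z_0$ and $\pi(z)=\pi(z')$, then $\pi(gz)=g\pi(z)=g\pi(z')=\pi(gz')$ for every $g$. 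Choosing $g$ with $gz,gz'\in Z_1$ gives $gz=gz'$, hence $z=z'$.
\item Lusin--Souslin then makes $\pi|_{Z_0}$ a Borel $G$-isomorphism onto the invariant conull Borel set $\pi(Z_0)$.
\end{itemize}

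You can also drop the measure entirely, which gives the Borel-level form of the cited result. Take $f_n:=\1_{B_n}$ for a countable family of Borel sets separating the points of $Z$. Let $\phi$ run over a countable $L^1$-dense subset of $C_c(G)$ instead of an approximate identity. Suppose $(\phi*f_n)(z)=(\phi*f_n)(z')$ for all such $\phi$ and $n$. Then $f_n(g^{-1}z)=f_n(g^{-1}z')$ for $\Haar$-a.e.\ $g$ and all $n$, so $g^{-1}z=g^{-1}z'$ for some $g$, and therefore $z=z'$. Hence $\pi$ is injective on all of $Z$, and the $L^2$-approximation argument in Step~1 is unnecessary. You then get that $Z$ itself is Borel $G$-isomorphic to the invariant Borel set $\pi(Z)\subset Y$. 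Your measure-theoretic route yields only conull invariant sets, but that is all the paper's definition of measure conjugacy requires.
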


We work throughout with compact metrizable target systems. The Poisson
configuration space itself is noncompact and appears only as the source of the
factor map and as an auxiliary space in the random construction.

\subsection{Local \texorpdfstring{$G$}{G}-spaces and sofic approximations}

We recall the local-$G$-space formalism of Bowen and Burton
\cite{BowenBurton}. Let G be an lcsc group with identity $e$.

\begin{definition}\label{defn}
Let $M$ be a locally compact second countable Hausdorff space. A map
$\vartheta:\operatorname{dom}(\vartheta)\to M$, where
$\operatorname{dom}(\vartheta)\subseteq G\times M$ is open, is a
\emph{partial left action} of $G$ if, for every $p\in M$,
\begin{enumerate}
\item $(e,p)\in\operatorname{dom}(\vartheta)$ and
$\vartheta(e,p)=p$;
\item if $(g,p)\in\operatorname{dom}(\vartheta)$, then
$(g^{-1},\vartheta(g,p))\in\operatorname{dom}(\vartheta)$ and
$\vartheta(g^{-1},\vartheta(g,p))=p$;
\item if $(h,p)$, $(g,\vartheta(h,p))$, and $(gh,p)$ belong to
$\operatorname{dom}(\vartheta)$, then
$\vartheta(gh,p)=\vartheta(g,\vartheta(h,p))$.
\end{enumerate}
The partial action is \emph{homogeneous} if, in addition,
\begin{enumerate}[resume]
\item for every $p\in M$, there is an open neighborhood $V_p$ of $e$ such
that $V_p\times\{p\}\subseteq\operatorname{dom}(\vartheta)$ and
$\vartheta(\cdot,p)$ restricts to a homeomorphism from $V_p$ onto an open
neighborhood of $p$ in $M$.
\end{enumerate}
A pair $(M,\vartheta)$ is a \emph{local left $G$-space} if $\vartheta$ is a
homogeneous partial left action of $G$ on $M$.
\end{definition}

We usually suppress $\vartheta$ and write $g.p$ for $\vartheta(g,p)$. If
$L\subseteq M$ and $\{g\}\times L\subseteq\operatorname{dom}(\vartheta)$, we
write $g.L:=\{g.p:p\in L\}$. In the same way, if $V\subseteq G$ and
$V\times\{p\}\subseteq\operatorname{dom}(\vartheta)$, we write
$V.p:=\{g.p:g\in V\}$.

\begin{definition}\label{def:chart}
Let $(M,\vartheta)$ be a local $G$-space and let $p\in M$. A
\emph{chart centered at $p$} is a homeomorphism
$f_p:\operatorname{dom}(f_p)\to\operatorname{rng}(f_p)$ such that
\begin{enumerate}
\item $\operatorname{dom}(f_p)$ is an open neighborhood of $p$ in $M$;
\item $\operatorname{rng}(f_p)$ is an open neighborhood of $e$ in $G$;
\item $f_p(g.p)=g$ for every $g\in\operatorname{rng}(f_p)$.
\end{enumerate}
Condition~(4) of \cref{defn} gives such a chart at every point.
\end{definition}

The next two results from \cite{BowenBurton} provide the canonical measure on
a local $G$-space and its local invariance.

\begin{proposition}
Let $(M,\vartheta)$ be a local $G$-space, and fix a right Haar measure $\Haar$
on $G$. There is a unique Radon measure $\vol_M$ on $M$ such that, whenever
$f_p$ is a chart centered at $p$ and $L\subseteq\operatorname{dom}(f_p)$ is
Borel,
\[
 \vol_M(L)=\Haar\bigl(\{g\in\operatorname{rng}(f_p):g.p\in L\}\bigr)
 =\Haar(f_p(L)).
\]
When the underlying local $G$-space is clear, we write $\vol$ for $\vol_M$.
\end{proposition}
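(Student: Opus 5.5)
Existence will come from patching together local pushforwards of $\Haar$ through the charts, and uniqueness from the fact that a Radon measure is determined by its values on a base of small open sets. The main tool is consistency of charts on overlaps: if $f_p$ and $f_q$ are charts centered at $p$ and $q$, then on $\operatorname{dom}(f_p)\cap\operatorname{dom}(f_q)$ the transition map $f_q\circ f_p^{-1}$ should locally be right translation. Write $x=g.p=h.q$. For a nearby point $x'=g'.p$, set $k=g'g^{-1}$, so $x'=k.x$ for $k$ near $e$; this is where condition (3) of \cref{defn} is used, once the relevant triples lie in the domain. Likewise $h'=kh$. Hence $f_q(f_p^{-1}(g'))=g'g^{-1}h$ locally, which is right multiplication by $g^{-1}h$. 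Since $\Haar$ is right invariant, the measures $(f_p^{-1})_*(\Haar|_{\operatorname{rng}f_p})$ and $(f_q^{-1})_*(\Haar|_{\operatorname{rng}f_q})$ agree on a neighborhood of each point of the overlap. A standard second-countability argument then shows they agree on the whole overlap.

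\emph{Existence.} Define the local measures $\nu_p:=(f_p^{-1})_*(\Haar|_{\operatorname{rng}f_p})$ on $\operatorname{dom}(f_p)$. Using second countability, choose a countable cover of $M$ by chart domains $U_n=\operatorname{dom}(f_{p_n})$, and disjointify it as $B_n=U_n\setminus\bigcup_{m<n}U_m$. Set $\vol_M(L):=\sum_n\nu_{p_n}(L\cap B_n)$. This is countably additive. By the compatibility above, for any chart $f_p$ and Borel $L\subseteq\operatorname{dom}(f_p)$ we get $\vol_M(L)=\sum_n\nu_p(L\cap B_n)=\nu_p(L)=\Haar(f_p(L))$, which is the required identity. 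For the Radon property: each point has a neighborhood whose closure is compact inside a chart domain, and on it $\vol_M$ is the pushforward of a Radon measure under a homeomorphism. Hence $\vol_M$ is locally finite, and since $M$ is an lcsc Hausdorff space, a locally finite Borel measure is automatically Radon.

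\emph{Uniqueness.} Any two measures satisfying the identity agree on every Borel subset of every chart domain. Chart domains cover $M$, and by second countability countably many suffice. Disjointifying as above, the two measures agree on every Borel set.

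\emph{Main obstacle.} The delicate step is the overlap compatibility. Condition (3) of \cref{defn} only applies when $(h,p)$, $(g,h.p)$ and $(gh,p)$ all lie in the domain, so I would first shrink to a neighborhood $W$ of $e$ and a neighborhood of $x$ on which every product involved stays inside $\operatorname{dom}(\vartheta)$, using openness of the domain and continuity. I would also use condition (2) to write $p=g^{-1}.x$, so that $\vartheta(\cdot,x)$ and $\vartheta(\cdot,p)$ differ by the right translation $k\mapsto kg$. Uniqueness of the chart coordinate near $x$ then follows from $f_p$ being a homeomorphism, in particular injective.
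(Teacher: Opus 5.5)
Your proof is correct. The paper gives no argument of its own for this proposition; it simply imports it from Bowen--Burton. Your chart-patching construction is the standard proof of that fact.

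The key step is sound. You show that the transitions satisfy $f_q\circ f_p^{-1}(g')=g'g^{-1}h$ near each point of the overlap. In doing so you correctly check that the three pairs required by axiom (3) lie in the open domain of $\vartheta$, and that $kh$ stays in $\operatorname{rng}(f_q)$. The rest then follows:
\begin{itemize}
\item right invariance of $\Haar$ makes the local pushforwards agree on overlaps;
\item a countable disjointified cover by chart domains gives both existence and uniqueness;
\item local finiteness on an lcsc Hausdorff space gives the Radon property.
\end{itemize}
This is also the same chart-by-chart, countable-partition style the paper uses in the proof of \cref{lem:local-measure-preserving}.

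One point your route makes visible, which a bare citation does not: you use only right invariance of $\Haar$, never unimodularity. That is exactly why the statement is phrased with a right Haar measure. By contrast, in \cref{lmp} and \cref{lem:local-measure-preserving} a partial translation $p\mapsto g.p$ acts by left multiplication in chart coordinates, and there unimodularity is genuinely needed.
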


\begin{lemma}\label{lmp}
Let $(M,\vartheta)$ be a local $G$-space, and suppose that
$\{g\}\times L\subseteq\operatorname{dom}(\vartheta)$ for some measurable
$L\subseteq M$ and $g\in G$. If $G$ is unimodular, then
\[
 \vol(g.L)=\vol(L).
\]
\end{lemma}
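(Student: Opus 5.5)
The plan is to reduce to the case where $L$ lies inside the domain of a single chart centered at a convenient point, and then compare the two chart descriptions of $\vol$ using the partial action axioms and right invariance of $\Haar$. Unimodularity will be used to turn a left translation into something $\Haar$ preserves.

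\textbf{Reduction to small pieces.} Since $\operatorname{dom}(\vartheta)$ is open and $M$ is second countable, I will cover $L$ by countably many open sets $U_k$ with the following properties. For some $p_k$, $U_k$ lies in the domain of a chart $f_{p_k}$. Also $\{g\}\times U_k\subseteq\operatorname{dom}(\vartheta)$, and $g.U_k$ lies in the domain of a chart $f_{g.p_k}$ centered at $g.p_k$. To obtain these sets, for each $q\in L$ I use homogeneity (condition~(4)) at $q$ and at $g.q$, together with openness of the domain and continuity of $\vartheta$. I then shrink neighborhoods so that all the relevant products stay inside $\operatorname{dom}(\vartheta)$. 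Disjointifying gives Borel pieces $L_k=L\cap U_k\setminus\bigcup_{j<k}U_j$. By axiom~(2), $p\mapsto g.p$ is injective on $L$, because $g^{-1}$ inverts it. Hence the sets $g.L_k$ are disjoint with union $g.L$. Countable additivity then reduces the lemma to a single piece $L_k$.

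\textbf{Computation on one piece.} Write $p=p_k$ and $q=g.p$. A point of $L_k$ has the form $h.p$ with $h\in f_p(L_k)$. Axiom~(3), applied to $(h,p)$, $(g,h.p)$ and $(gh,p)$, gives $g.(h.p)=(gh).p$, provided $(gh,p)\in\operatorname{dom}(\vartheta)$; the shrinking above is arranged to ensure this. Applying axiom~(3) once more, writing $gh=(ghg^{-1})g$ and using base point $q$, I get $g.(h.p)=(ghg^{-1}).q$. Therefore
\[
f_q(g.L_k)=g\,f_p(L_k)\,g^{-1}.
\]
I will choose the neighborhoods small enough that every conjugate $ghg^{-1}$ lies in $\operatorname{rng}(f_q)$. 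Then
\[
\vol(g.L_k)=\Haar\bigl(gf_p(L_k)g^{-1}\bigr).
\]
Right invariance removes the factor $g^{-1}$. Left translation multiplies $\Haar$ by the modular function $\Delta(g)$, which equals $1$ because $G$ is unimodular. This yields $\vol(g.L_k)=\Haar(f_p(L_k))=\vol(L_k)$, and summing over $k$ proves the lemma.

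\textbf{Main obstacle.} The delicate step is the bookkeeping needed to apply axiom~(3): every intermediate pair must lie in $\operatorname{dom}(\vartheta)$, and each conjugate must land in the chart range. Since the partial action is only local, this has to be arranged through the choice of the cover. I would handle it pointwise. At each $q\in L$, take charts at $q$ and at $g.q$, and use openness of $\operatorname{dom}(\vartheta)$ at the points $(g,q)$ and $(e,q)$ together with joint continuity. This produces a small symmetric neighborhood $W$ of $e$ such that the maps $h\mapsto h.q$, $h\mapsto g.(h.q)$ and $h\mapsto(ghg^{-1}).(g.q)$ are all defined on $W$ and agree by axiom~(3). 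I then take $U=W.q$ and centre the chart at $p=q$ itself.
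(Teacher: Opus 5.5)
Your proposal is correct and takes essentially the same route as the paper, which proves this fact as \cref{lem:local-measure-preserving}. There, too, the set is covered by countably many chart domains and refined to a disjoint partition, the images are disjoint by the inverse axiom, and the conclusion follows chartwise from unimodularity and countable additivity. The only difference is cosmetic: the paper describes $T_g$ as left multiplication by $g$ in orbit coordinates at $p$, while you recentre a second chart at $g.p$ and see $T_g$ as conjugation $h\mapsto ghg^{-1}$, which spells out the right-invariance step the paper leaves implicit.
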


\begin{definition}\label{def:local-sofic-model}
Let $\mathcal M=(M,\vartheta)$ be a local $G$-space, let $U\subseteq G$ be
open and precompact, and let $\eps>0$. Define $M[U]=M[\vartheta,U]$ to be the
set of all $p\in M$ such that
\begin{enumerate}
\item if $g,h,gh\in U$, then $g.(h.p)=(gh).p$, with both sides defined;
\item the map $g\mapsto g.p$ is a homeomorphism from $U$ onto an open
neighborhood of $p$.
\end{enumerate}
If $0<\vol(M)<\infty$ and
\[
 \vol(M[U])\geq(1-\eps)\vol(M),
\]
then $\mathcal M$ is a \emph{$(U,\eps)$-sofic approximation} to $G$.
\end{definition}

\begin{definition}\label{def:sofic-approximation}
A \emph{sofic approximation} to $G$ is a sequence
$\Sigma=(M_i)_{i\geq1}$ such that each $M_i$ is a
$(U_i,\eps_i)$-sofic approximation, where
\begin{enumerate}
\item the $U_i$ are precompact open sets satisfying
$U_1\subseteq U_2\subseteq\cdots$ and $\bigcup_iU_i=G$;
\item $\eps_i\to0$ as $i\to\infty$.
\end{enumerate}
The group $G$ is \emph{sofic} if it admits such an approximation.
\end{definition}
\subsection{The spatial microstate formula}

 We use Bowen's spatial definition of measure sofic entropy for locally compact
 groups, specialized to compact metrizable continuous pmp targets. It is the
 local-$G$-space version of Singh's pseudometric formula
 \cite[Section~3.4]{SinghThesis}. Singh uses a normalized $L^2$ metric and
 finitely many continuous test functions, while Bowen uses a partial-map
 quasimetric and weak-star neighborhoods of the invariant measure. The two
 forms of the empirical-law condition are equivalent, and the comparison with
 Singh's normalized $L^2$ formulation is proved in
 \cref{app:singh-comparison}.
	We include metric independence and topological-conjugacy invariance in
\cref{app:singh-comparison}. The stronger measure-conjugacy invariance and
compact-model independence used in the main theorem follow from the explicit
comparison with Singh's measure entropy.

Let $G\curvearrowright(Y,\rho)$ be a jointly continuous action on a compact metric space, with $0\leq\rho\leq1$, and let $\nu$ be $G$-invariant. Fix a proper left-invariant compatible metric $d_G$ on $G$ and write
\[
 B(r):=\{g\in G:d_G(g,e)<r\}.
\]
If $M$ is a finite-volume local $G$-space, write
\[
 \overline{\vol}_M:=\frac{\vol_M}{\vol_M(M)},
 \qquad
 \rho^M(\psi,\psi'):=\int_M\rho(\psi(p),\psi'(p))\,d\overline{\vol}_M(p).
\]
For $g\in G$ let
\[
 M[g]:=\{p\in M:(g,p)\text{ lies in the domain of the partial action}\}.
\]
For a measurable map $\psi:M\to Y$ define
\begin{equation}\label{eq:defect}
 \Delta_{\rho,M}(g,\psi)
 :=\int_{M[g]}\rho\bigl(\psi(g.p),g\psi(p)\bigr)\,d\overline{\vol}_M(p)
   +\overline{\vol}_M(M\setminus M[g]).
\end{equation}
Undefined points get the maximal penalty $1$. In other words, this is Bowen's quasi-metric distance between the partially defined map $\psi\circ g$ and the everywhere defined map $g\circ\psi$.

For a precompact open identity neighborhood $L\subset G$, $\delta>0$, and a weak-star neighborhood $\cO$ of $\nu$, define the standard model space
\[
 \Map(M,Y,\rho:L,\delta,\cO)
 :=\left\{\psi:M\to Y:
 \Delta_{\rho,M}(g,\psi)<\delta\ \forall g\in L,
 \quad \psi_*\overline{\vol}_M\in\cO\right\}.
\]
We also define the averaged model space
\[
 \Mapavg(M,Y,\rho:L,\delta,\cO)
 :=\left\{\psi:M\to Y:
 \begin{aligned}
 &\frac1{\Haar(L)}\int_L\Delta_{\rho,M}(g,\psi)\,d\Haar(g)<\delta,\\
 &\psi_*\overline{\vol}_M\in\cO
 \end{aligned}
 \right\}.
\]
When the empirical condition is omitted, we suppress $\cO$ from the notation.

\begin{definition}
Let $(Y,\rho)$ be a pseudometric space and let $\eps>0$. A subset
$E\subseteq Y$ is \emph{$(\rho,\eps)$-separated} if
$\rho(x,y)>\eps$ for all distinct $x,y\in E$. We write
$\Sep_\eps(E,\rho^M)$ for the
maximum cardinality of a
$(\rho^M,\eps)$-separated subset of $E$, with the conventions
$\Sep_\eps(\varnothing,\rho^M)=0$, $\log0=-\infty$ and
$\log(+\infty)=+\infty$.
\end{definition}

\begin{definition}\label{def:spatial-entropy}
For $\Sigma=(M_i)$,  $V_i:=\vol_{M_i}(M_i)$. The spatial measure sofic
entropy of the compact model $(Y,\nu)$ relative to $\Sigma$ is defined by
\begin{align*}
 h^{\mathrm{meas}}_{\Sigma,\eps}(G,Y,\nu,\rho)
 &:={\inf}_{\cO\ni\nu}\,{\inf}_{L}\,{\inf}_{\delta>0}
 \limsup_{i\to\infty}\frac1{V_i}
 \log\Sep_\eps\bigl(\Map(M_i,Y,\rho:L,\delta,\cO),\rho^{M_i}\bigr),\\
 h^{\mathrm{meas}}_{\Sigma}(G,Y,\nu,\rho)
 &:=\lim_{\eps\downarrow0}
 h^{\mathrm{meas}}_{\Sigma,\eps}(G,Y,\nu,\rho),
\end{align*}
where $L$ ranges over precompact open identity neighborhoods of $G$.
Replacing $\Map$ by $\Mapavg$ defines
$h^{\mathrm{meas,avg}}_{\Sigma,\eps}$ and
$h^{\mathrm{meas,avg}}_\Sigma$.
\end{definition}
\begin{remark}
	The appendix compares the local-$G$-space formula above with Singh's
compact-model pseudometric entropy for the completed approximation
$\widetilde\Sigma$.  When $G$ is nondiscrete, Singh's
Theorem~3.4.2, Proposition~3.4.3, and Definition~11 identify that
pseudometric entropy with his standard measure entropy.  Since the latter
is a measure-conjugacy invariant, the value is independent of the chosen
compact model.
\end{remark}

The next comparison connects the random construction with Bowen's pointwise spatial formula. It is the empirical-law version of \cite[Proposition~4.10]{BowenLCEntropy}; we include the proof because preservation of the weak-star condition is essential below.

\begin{lemma}\label{lem:large-products}\cite[Lemma~4.11]{BowenLCEntropy}
Let $0<r_0<r_1<r_2$ and $0<\tau<1$. Assume $r_0+r_1<r_2$ and
\begin{equation}\label{eq:haar-balls-condition}
 \tau\Haar(B(r_2))<(1-\tau)\Haar(B(r_0)).
\end{equation}
If measurable sets $W_j\subset B(r_j)$ satisfy
\[
 \Haar(W_j)>(1-\tau)\Haar(B(r_j)),\qquad j=0,2,
\]
then
\[
 W_0W_2\supset B(r_1).
\]
\end{lemma}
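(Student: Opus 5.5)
Fix $g\in B(r_1)$; the goal is to write $g=w_0w_2$ with $w_0\in W_0$ and $w_2\in W_2$. Equivalently, $W_0^{-1}g$ must meet $W_2$, since $w_0^{-1}g=w_2$ gives $g=w_0w_2$. So we will show that $W_0^{-1}g\cap W_2$ has positive measure, by a measure-counting argument inside the ball $B(r_2)$.

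\emph{Containment.} Left-invariance of $d_G$ and the triangle inequality give, for $w\in B(r_0)$,
\[
d_G(w^{-1}g,e)=d_G(g,w)\le d_G(g,e)+d_G(e,w)<r_1+r_0<r_2 .
\]
Hence $W_0^{-1}g\subset B(r_0)^{-1}g\subset B(r_2)$. Note that $B(r)^{-1}=B(r)$, again by left invariance, since $d_G(w^{-1},e)=d_G(e,w)$.

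\emph{Measures.} The measure $\Haar$ is a right Haar measure, so $\Haar(W_0^{-1}g)=\Haar(W_0^{-1})$. The main subtlety is relating $\Haar(W_0^{-1})$ to $\Haar(W_0)$, because inversion carries right Haar measure to left Haar measure and the two differ by the modular function. I would handle this as follows. In the intended application $G$ is unimodular (compare \cref{lmp}); if that is available, then $\Haar(W_0^{-1})=\Haar(W_0)$. Otherwise, I would rewrite the argument with right translation on the other side. Set $S=gW_2^{-1}$, so that $g=w_0w_2$ iff $w_0\in gW_2^{-1}$, and compare $S$ with $W_0$ inside $B(r_2)$; this route needs the analogous inversion control for $W_2$. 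I would state the unimodular case, or note that the cited \cite[Lemma~4.11]{BowenLCEntropy} uses a Haar measure invariant under the translations involved together with the inversion symmetry of the balls.

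\emph{Counting.} Assume inversion preserves $\Haar$ on these sets. Then
\[
\Haar(W_0^{-1}g)=\Haar(W_0)>(1-\tau)\Haar(B(r_0)),
\]
and
\[
\Haar\bigl(B(r_2)\setminus W_2\bigr)<\tau\,\Haar(B(r_2)).
\]
If $W_0^{-1}g\cap W_2$ were empty, then $W_0^{-1}g\subset B(r_2)\setminus W_2$. This would give
\[
(1-\tau)\Haar(B(r_0))<\tau\,\Haar(B(r_2)),
\]
contradicting \eqref{eq:haar-balls-condition}. Hence the intersection is nonempty. Any $w_2$ in it has the form $w_2=w_0^{-1}g$ with $w_0\in W_0$, so $g=w_0w_2\in W_0W_2$.

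The only real obstacle is the modular-function bookkeeping in the measures step. Everything else is the triangle inequality and a pigeonhole comparison of measures.
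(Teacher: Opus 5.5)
The paper does not prove this lemma; it quotes it from \cite[Lemma~4.11]{BowenLCEntropy}, so there is no in-paper argument to compare against. Your argument is correct and is the standard translate-and-count proof. Left invariance and $r_0+r_1<r_2$ give $W_0^{-1}g\subset B(r_2)$. Then
$\Haar(W_0^{-1}g\cap W_2)\geq(1-\tau)\Haar(B(r_0))-\tau\Haar(B(r_2))>0$,
so $g\in W_0W_2$.

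The one thing to tidy up is the hedge about the modular function, which is unnecessary here. The lemma is only invoked in \cref{prop:avg-standard} for the sofic group $G$. Sofic groups are unimodular, as the paper records at the start of Section~3. So $\Haar(W_0^{-1}g)=\Haar(W_0^{-1})=\Haar(W_0)$ holds outright, and you should simply state that.

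Your fallback via $gW_2^{-1}$ would not have avoided the issue anyway. For a right Haar measure, the left translation by $g$ also introduces a modular factor, not just the inversion.

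Two minor points:
\begin{itemize}
\item Writing $\Haar(B(r_2)\setminus W_2)=\Haar(B(r_2))-\Haar(W_2)$ needs $\Haar(B(r_2))<\infty$, which follows from properness of $d_G$.
\item The hypothesis $r_0<r_1$ is never used.
\end{itemize}
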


\begin{proposition}[Averaged and pointwise spatial formulas coincide]\label{prop:avg-standard}
For every compact metrizable continuous $G$-system $(Y,\nu)$ and every compatible metric $\rho\leq1$,
\[
 h^{\mathrm{meas,avg}}_{\Sigma,\eps}(G,Y,\nu,\rho)
 =h^{\mathrm{meas}}_{\Sigma,\eps}(G,Y,\nu,\rho)
 \qquad(\eps>0).
\]
So we have
\[
 h^{\mathrm{meas,avg}}_\Sigma(G,Y,\nu,\rho)
 =h^{\mathrm{meas}}_\Sigma(G,Y,\nu,\rho).
\]
\end{proposition}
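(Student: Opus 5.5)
We prove the two inequalities between the model spaces separately; the entropy identity follows because the remaining infima and limits in \cref{def:spatial-entropy} are identical on both sides.

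\emph{Pointwise implies averaged.} If $\Delta_{\rho,M}(g,\psi)<\delta$ for every $g\in L$, then the average of $\Delta_{\rho,M}(\cdot,\psi)$ over $L$ is also $<\delta$. The empirical-law condition is the same in both spaces. Hence
\[
\Map(M_i,Y,\rho:L,\delta,\cO)\subseteq\Mapavg(M_i,Y,\rho:L,\delta,\cO),
\]
and $h^{\mathrm{meas}}_{\Sigma,\eps}\le h^{\mathrm{meas,avg}}_{\Sigma,\eps}$.

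\emph{Averaged implies pointwise.} Here we show that for every $L$ and $\delta$ there are $L'$ and $\delta'$ with
\[
\Mapavg(M_i,Y,\rho:L',\delta',\cO)\subseteq\Map(M_i,Y,\rho:L,\delta,\cO)
\]
for all large $i$. Since we keep the same $\cO$, the weak-star condition is preserved automatically; this is the point the paper stresses.

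Choose radii $r_0<r_1<r_2$ with $L\subseteq B(r_1)$ and $r_0+r_1<r_2$. Take $L'=B(r_2)$. By Markov's inequality, the set $W_2$ of $g\in B(r_2)$ with $\Delta(g,\psi)<\delta/3$ has
\[
\Haar(W_2)>(1-\tau)\Haar(B(r_2))
\]
once $\delta'\le\tau\delta/3$. We also need a set $W_0\subseteq B(r_0)$ of large measure on which $\Delta(\cdot,\psi)<\delta/3$. We get it by applying the same Markov argument to $B(r_0)$: the averaged defect over $B(r_0)$ is at most $\Haar(B(r_2))/\Haar(B(r_0))$ times the averaged defect over $B(r_2)$, so we shrink $\delta'$ accordingly. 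Finally pick $\tau$ small enough that \eqref{eq:haar-balls-condition} holds. Then \cref{lem:large-products} gives $B(r_1)\subseteq W_0W_2$, so every $g\in L$ factors as $g=hk$ with $h\in W_0$ and $k\in W_2$.

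\emph{Quasi-triangle inequality for the defect.} The main step is to prove
\[
\Delta(hk,\psi)\le\Delta(h,\psi)+\Delta(k,\psi)+\eta_i,
\]
where $\eta_i\to0$ uniformly over $h,k$ in the compact set $\overline{B(r_2)}$. To do this we work on $M_i[U_i]$ with $U_i\supseteq B(2r_2)$. There $(hk).p=h.(k.p)$ holds, and we split
\[
\rho\bigl(\psi(hk.p),hk\psi(p)\bigr)\le\rho\bigl(\psi(h.(k.p)),h\psi(k.p)\bigr)+\rho\bigl(h\psi(k.p),hk\psi(p)\bigr).
\]
The first term, integrated over $p$, is controlled by $\Delta(h,\psi)$ after the change of variables $q=k.p$. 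This substitution preserves $\vol$ by \cref{lmp}, which needs unimodularity. If unimodularity is not available, we instead use a bounded Radon--Nikodym factor on compact sets, or note that sofic $G$ admitting such approximations is unimodular as in Bowen--Burton. The exceptional set, where $k.p\notin M_i[U_i]$ or the action is undefined, has measure $O(\eps_i)$ and each point there contributes at most $1$.

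The second term is the hard one, because it is not directly $\rho\bigl(\psi(k.p),k\psi(p)\bigr)$. We handle it with uniform equicontinuity of the maps $y\mapsto hy$ for $h$ in a compact set, which follows from joint continuity and compactness of $Y$. Given $\eta>0$, choose $\beta$ so that $\rho(y,y')<\beta$ implies $\rho(hy,hy')<\eta$ for all such $h$. The measure of the set where $\rho\bigl(\psi(k.p),k\psi(p)\bigr)\ge\beta$ is at most $\Delta(k,\psi)/\beta$. Since $\Delta(k,\psi)<\delta/3$ could exceed $\beta$, we do not use the third of $\delta$ for $W_2$ directly. Instead we demand $\Delta(k,\psi)<\beta\eta$ on $W_2$, taking $\delta'$ even smaller, and the same for $W_0$. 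This gives
\[
\Delta(g,\psi)<\delta/3+2\eta+O(\eps_i)<\delta
\]
for large $i$ and every $g\in L$. This equicontinuity bookkeeping is where I expect the main care to be needed.

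Combining the two inclusions yields $h^{\mathrm{meas,avg}}_{\Sigma,\eps}\le h^{\mathrm{meas}}_{\Sigma,\eps}$, hence equality for each $\eps$. Letting $\eps\downarrow0$ gives the equality of $h^{\mathrm{meas,avg}}_\Sigma$ and $h^{\mathrm{meas}}_\Sigma$.
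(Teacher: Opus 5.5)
Your proof is correct and takes essentially the same route as the paper. You factor $g=hk$ with $h$ in the smaller ball via \cref{lem:large-products}, then bound the defect at $g$ by three pieces: an equicontinuity term in $h$ (the paper's $A_1$), a change of variables $q=k.p$ using measure preservation of partial translations (the paper's $A_2$), and the mass of the non-good set (the paper's $A_3$). The only difference is bookkeeping: you apply Markov's inequality directly to $\Delta(\cdot,\psi)$, whereas the paper thresholds the pointwise error sets $E(u)$ and bounds the pullback $\{p:k.p\in E(h)\}$.
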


\begin{proof}
Since every pointwise microstate is averaged,
\[
 \Map(M,Y,\rho:L,\delta,\cO)
 \subseteq \Mapavg(M,Y,\rho:L,\delta,\cO),
\]
and so
\[
 h^{\mathrm{meas}}_{\Sigma,\eps}(G,Y,\nu,\rho)
 \leq h^{\mathrm{meas,avg}}_{\Sigma,\eps}(G,Y,\nu,\rho).
\]

For the reverse inequality, we show that one model space is eventually contained in the other. Fix a precompact open identity neighborhood $L\subset G$, a number $0<\delta<1$, and a weak-star neighborhood $\cO$ of $\nu$. We will choose a precompact open identity neighborhood $\widetilde L$, a tolerance $\widetilde\delta>0$, and an index $I$ such that
\begin{equation}\label{eq:avg-to-standard-inclusion}
 \Mapavg(M_i,Y,\rho:\widetilde L,\widetilde\delta,\cO)
 \subseteq
 \Map(M_i,Y,\rho:L,\delta,\cO)
 \qquad(i\geq I).
\end{equation}
We do not change the microstate, so the empirical-measure condition is preserved exactly.

By joint continuity of the action and compactness of $Y$, the map
\[
 G\times Y\times Y\longrightarrow\mathbb R,
 \qquad (h,x,y)\longmapsto\rho(hx,hy),
\]
is uniformly continuous on a sufficiently small compact neighborhood of $\{e\}\times Y\times Y$. So there are $r_0>0$ and $\kappa>0$ such that
\begin{equation}\label{eq:local-equicontinuity}
 \rho(x,y)<\kappa,\quad h\in B(r_0)
 \quad\Longrightarrow\quad
 \rho(hx,hy)<\delta/10.
\end{equation}
Choose $r_1>r_0$ with $L\subset B(r_1)$, and then choose $r_2>r_0+r_1$. Since $B(r_0)$ and $B(r_2)$ have finite positive Haar measure, we may choose $0<\tau<1$ so small that
\begin{equation}\label{eq:tau-choices}
 \tau\Haar(B(r_2))<(1-\tau)\Haar(B(r_0)),
 \qquad
 \tau<\min\{\kappa,\delta/10\}.
\end{equation}
Put
\[
 \widetilde L:=B(r_2),\qquad \widetilde\delta:=\tau^{10}.
\]
The balls are symmetric because
$d_G(g^{-1},e)=d_G(e,g)=d_G(g,e)$.

Because $U_i\uparrow G$, for all sufficiently large $i$ one has
$\widetilde L^2\subset U_i$. Monotonicity of the good sets then gives
$M_i[U_i]\subset M_i[\widetilde L^2]$, so the defining sofic
condition implies
$\dbar_{M_i}(M_i[\widetilde L^2])\to1$. Choose $i$ sufficiently large
that
\begin{equation}\label{eq:comparison-core}
 \dbar_{M_i}\bigl(M_i[\widetilde L^2]\bigr)
 >1-\min\{\tau^{10},\delta/10\}.
\end{equation}
Let
\[
 \psi\in\Mapavg(M_i,Y,\rho:\widetilde L,\tau^{10},\cO).
\]
For $u\in\widetilde L$, define
\[
 E(u):=\left\{p\in M_i[u]:
 \rho\bigl(u\psi(p),\psi(u.p)\bigr)<\tau^4\right\}
\]
and
\[
 W:=\left\{u\in\widetilde L:
 \dbar_{M_i}(E(u))>1-\tau^4\right\}.
\]
If $u\notin W$, then the complement of $E(u)$, including points at which $u.p$ is undefined, has normalized measure at least $\tau^4$. On the part of $M_i[u]\setminus E(u)$ the metric defect is at least $\tau^4$, while on $M_i\setminus M_i[u]$ the undefined-domain penalty is $1\geq\tau^4$. This gives
\[
 \Delta_{\rho,M_i}(u,\psi)
 \geq \tau^4\bigl(1-\dbar_{M_i}(E(u))\bigr)
 \geq\tau^8.
\]
The averaged microstate inequality now gives
\begin{align*}
 \tau^{10}\Haar(\widetilde L)
 &>\int_{\widetilde L}\Delta_{\rho,M_i}(u,\psi)\,d\Haar(u)\\
 &\geq\tau^8\Haar(\widetilde L\setminus W).
\end{align*}
So we have
\begin{equation}\label{eq:good-group-elements}
 \Haar(\widetilde L\setminus W)<\tau^2\Haar(B(r_2)).
\end{equation}
In particular,
\[
 \Haar(W)>(1-\tau^2)\Haar(B(r_2))
 >(1-\tau)\Haar(B(r_2)).
\]
Also, using the first inequality in \eqref{eq:tau-choices},
\begin{align*}
 \Haar(B(r_0)\setminus W)
 &\leq \Haar(\widetilde L\setminus W)\\
 &<\tau^2\Haar(B(r_2))\\
 &<\tau(1-\tau)\Haar(B(r_0))
 <\tau\Haar(B(r_0)).
\end{align*}
So we have
\[
 \Haar(W\cap B(r_0))>(1-\tau)\Haar(B(r_0)).
\]
By \cref{lem:large-products}, every $g\in L\subset B(r_1)$ admits a factorization
\begin{equation}\label{eq:g-hk-factorization}
 g=hk,
 \qquad h\in W\cap B(r_0),\quad k\in W.
\end{equation}
Fix such a $g$ and such a factorization.

We now estimate the defect of $\psi$ at $g$ directly. For a point $p\in M_i$, define three nonnegative functions $A_1,A_2,A_3$, each bounded by $1$, as follows. If the expressions involved are defined, let
\begin{align*}
 A_1(p)&:=\rho\bigl(h(k\psi(p)),h\psi(k.p)\bigr),\\
 A_2(p)&:=\rho\bigl(h\psi(k.p),\psi(h.(k.p))\bigr),\\
 A_3(p)&:=\rho\bigl(\psi(h.(k.p)),\psi((hk).p)\bigr);
\end{align*}
and set the corresponding $A_j(p)$ equal to $1$ whenever one of its required partial-action expressions is undefined. If $(hk).p$ is defined and all intermediate expressions are defined, the triangle inequality gives
\[
 \rho\bigl((hk)\psi(p),\psi((hk).p)\bigr)
 \leq A_1(p)+A_2(p)+A_3(p).
\]
If $(hk).p$ is undefined, then $A_3(p)=1$, so the undefined-domain penalty in \eqref{eq:defect} is also bounded by $A_1(p)+A_2(p)+A_3(p)$. If $(hk).p$ is defined but one of the intermediate expressions is undefined, then by definition at least one of $A_1(p),A_2(p),A_3(p)$ equals $1$, while the metric defect is at most $1$. In every case, the same pointwise bound holds, so
\begin{equation}\label{eq:defect-three-direct}
 \Delta_{\rho,M_i}(g,\psi)
 \leq \int_{M_i}\bigl(A_1+A_2+A_3\bigr)\,d\dbar_{M_i}.
\end{equation}

Because $k\in W$, we have $\dbar_{M_i}(E(k))>1-\tau^4$. For $p\in E(k)$,
\[
 \rho\bigl(k\psi(p),\psi(k.p)\bigr)<\tau^4<\kappa.
\]
Since $h\in B(r_0)$, \eqref{eq:local-equicontinuity} gives $A_1(p)<\delta/10$. On the complement of $E(k)$ we only use $A_1\leq1$. This gives
\begin{equation}\label{eq:A1-bound}
 \int_{M_i}A_1\,d\dbar_{M_i}
 <\frac\delta{10}+\tau^4.
\end{equation}

To estimate $A_2$, we must use the fact that a partial translation preserves the canonical measure. Set
\[
 C_i:=M_i[\widetilde L^2],
 \qquad F:=E(h)\cap C_i.
\]
Since $h\in W$, \eqref{eq:comparison-core} gives
\begin{equation}\label{eq:F-lower-measure}
 \dbar_{M_i}(F)
 \geq1-\tau^4-\dbar_{M_i}(M_i\setminus C_i)
 >1-\tau^4-\delta/10.
\end{equation}
For every $q\in F$, the point $k^{-1}.q$ is defined because $q\in M_i[\widetilde L^2]$ and $k^{-1}\in\widetilde L$. Put
\[
 p:=k^{-1}.q.
\]
The local inverse and multiplication axioms on $M_i[\widetilde L^2]$ imply
\[
 k.p=k.(k^{-1}.q)=q.
\]
So we have
\[
 k^{-1}.F\subseteq\{p\in M_i:k.p\in E(h)\}.
\]
The partial translation $T(q):=k^{-1}.q$ is injective on $F$, with inverse $p\mapsto k.p$ on $T(F)$. Since $F\subset M_i[\widetilde L^2]$ and $k,k^{-1}\in\widetilde L$, \cref{lem:local-measure-preserving} applies and gives
\[
 \vol_{M_i}(k^{-1}.F)=\vol_{M_i}(F).
\]
Combining this identity with \eqref{eq:F-lower-measure}, we get
\begin{equation}\label{eq:pullback-Eh}
 \dbar_{M_i}\{p:k.p\in E(h)\}
 >1-\tau^4-\delta/10.
\end{equation}
Whenever $k.p\in E(h)$, all expressions in $A_2(p)$ are defined and
\[
 A_2(p)=\rho\bigl(h\psi(k.p),\psi(h.(k.p))\bigr)<\tau^4.
\]
Using $A_2\leq1$ elsewhere and \eqref{eq:pullback-Eh}, we get
\begin{equation}\label{eq:A2-bound}
 \int_{M_i}A_2\,d\dbar_{M_i}
 \leq \tau^4+\tau^4+\delta/10.
\end{equation}

Finally, if $p\in C_i=M_i[\widetilde L^2]$, then $h,k,hk\in\widetilde L^2$ and the local multiplication axiom gives
\[
 h.(k.p)=(hk).p.
\]
So $A_3(p)=0$ on $C_i$, and $A_3\leq1$ everywhere. By \eqref{eq:comparison-core},
\begin{equation}\label{eq:A3-bound}
 \int_{M_i}A_3\,d\dbar_{M_i}
 \leq\dbar_{M_i}(M_i\setminus C_i)<\delta/10.
\end{equation}

Substituting \eqref{eq:A1-bound}, \eqref{eq:A2-bound}, and \eqref{eq:A3-bound} into \eqref{eq:defect-three-direct} gives
\[
 \Delta_{\rho,M_i}(g,\psi)
 <\frac{3\delta}{10}+3\tau^4.
\]
Since $\tau<\delta/10<1$, we have $\tau^4<\delta/10$, and so
\[
 \Delta_{\rho,M_i}(g,\psi)<\frac{6\delta}{10}<\delta.
\]
The element $g\in L$ was arbitrary, so $\psi$ belongs to
$\Map(M_i,Y,\rho:L,\delta,\cO)$. This proves \eqref{eq:avg-to-standard-inclusion}.

For every $\eps>0$, the inclusion gives
\begin{align*}
 &\limsup_{i\to\infty}\frac1{V_i}\log\Sep_\eps
 \bigl(\Mapavg(M_i,Y,\rho:\widetilde L,\widetilde\delta,\cO),\rho^{M_i}\bigr)\\
 &\qquad\leq
 \limsup_{i\to\infty}\frac1{V_i}\log\Sep_\eps
 \bigl(\Map(M_i,Y,\rho:L,\delta,\cO),\rho^{M_i}\bigr).
\end{align*}
Taking the infimum over the averaged parameters on the left and then over $L$, $\delta$, and $\cO$ on the right proves
\[
 h^{\mathrm{meas,avg}}_{\Sigma,\eps}
 \leq h^{\mathrm{meas}}_{\Sigma,\eps}.
\]
Together with the first inequality, this proves equality at every scale $\eps$, and taking the limit as $\eps\downarrow0$ proves the equality of the full entropies.
\end{proof}


	\begin{definition}\label{def:cpe}
	A standard pmp action $G\curvearrowright(X,\mu)$ has
	\emph{completely positive measure sofic entropy relative to $\Sigma$} if
	every nontrivial measurable factor $G\curvearrowright(Z,\zeta)$ has
	strictly positive standard measure sofic entropy relative to $\Sigma$.
	When $G$ is nondiscrete, \cref{cor:singh-measure-identification} shows that
	this entropy may be computed from any compact metrizable continuous model
	$(Y,\nu)$ of the factor as
	\[
	h^{\mathrm{meas}}_\Sigma(G,Y,\nu,\rho),
	\]
	where $\rho\leq1$ is any compatible metric. The resulting value is
	independent of both the compact model and the metric.
\end{definition}
\begin{remark}\label{rem:cpe-model-independence}
	This compact-model independence comes from the Singh--Bowen comparison and
	Singh's measure-conjugacy invariance; it does not follow just from
	topological-conjugacy invariance of the compact-model formula.
\end{remark}


\section{Locally compact sofic models}

Throughout the remainder of the paper, $G$ denotes the locally compact second countable sofic group in \cref{thm:main}. A local $G$-space, its canonical measure, and the good sets $M[U]$ are as in Section~2. The good sets $M[U]$ are Borel in the standard formulation. Fix a right Haar measure $\Haar$ on $G$.
Since locally compact sofic groups are unimodular
\cite{BowenBurton}, $\Haar$ is also left invariant and invariant under
inversion.

Let $\Sigma=(M_i)$ be a sofic approximation. So there are precompact open identity neighborhoods $U_i\uparrow G$ and numbers $\eps_i\downarrow0$ such that
\[
 \vol_{M_i}(M_i[U_i])\geq (1-\eps_i)\vol_{M_i}(M_i).
\]
Write
\[
V_i:=\vol_{M_i}(M_i),\qquad \dbar_i:=V_i^{-1}\vol_{M_i}.
\]

\begin{lemma}[Local translations preserve canonical measure]\label{lem:local-measure-preserving}
Let $M$ be a local $G$-space with canonical measure.
For every $g\in G$, the partial translation
\[
T_g:M[g]\to M[g^{-1}],\qquad T_g(p)=g.p,
\]
is a Borel bijection with inverse $T_{g^{-1}}$, and for every Borel
$E\subset M[g]$ one has
\[
\operatorname{vol}_M(T_gE)=\operatorname{vol}_M(E).
\]
\end{lemma}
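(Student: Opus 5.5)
The plan has two parts: first show that $T_g$ is a Borel bijection with inverse $T_{g^{-1}}$, then show it preserves volume by working chart by chart and invoking \cref{lmp}, which applies because $G$ is unimodular.

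\emph{Bijection and Borel structure.} The bijectivity is essentially axiom~(2) of \cref{defn}. If $p\in M[g]$, then $(g^{-1},g.p)$ lies in $\operatorname{dom}(\vartheta)$ and $g^{-1}.(g.p)=p$. Hence $g.p\in M[g^{-1}]$ and $T_{g^{-1}}\circ T_g=\mathrm{id}_{M[g]}$. Applying the same argument to $g^{-1}$, and using $(g^{-1})^{-1}=g$, gives $T_g\circ T_{g^{-1}}=\mathrm{id}_{M[g^{-1}]}$. For the Borel properties:
\begin{itemize}
\item $M[g]$ is the slice $\{p:(g,p)\in\operatorname{dom}(\vartheta)\}$ of an open set, so it is open.
\item $T_g$ is continuous, because $\vartheta$ is continuous (it is a partial action on a locally compact space, in the Bowen--Burton standing convention).
\item $T_g$ is therefore a continuous bijection between open sets whose inverse $T_{g^{-1}}$ is also continuous, i.e.\ a homeomorphism. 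In particular it is a Borel bijection with Borel inverse, and it sends Borel sets to Borel sets.
\end{itemize}

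\emph{Measure preservation.} Let $E\subset M[g]$ be Borel. By definition of $M[g]$ we have $\{g\}\times E\subseteq\operatorname{dom}(\vartheta)$. Since locally compact sofic groups are unimodular \cite{BowenBurton}, \cref{lmp} applies directly and gives $\vol_M(g.E)=\vol_M(E)$, that is, $\vol_M(T_gE)=\vol_M(E)$.

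\emph{If \cref{lmp} is only available locally.} Suppose \cref{lmp} were stated only for sets inside a single chart. I would then reduce to that case as follows.
\begin{enumerate}
\item Second countability gives a countable cover of $M[g]$ by chart domains $\operatorname{dom}(f_p)$ on which $T_g$ maps into a chart domain at $g.p$.
\item Disjointify $E$ into Borel pieces $E_n$, each lying in one such chart.
\item Each image $T_gE_n$ is Borel, and the images are pairwise disjoint by injectivity of $T_g$.
\item Countable additivity on both sides then yields $\vol_M(T_gE)=\sum_n\vol_M(T_gE_n)=\sum_n\vol_M(E_n)=\vol_M(E)$.
\end{enumerate}
For the chart-local identity itself: in the chart $f_p$ one has $f_{g.p}(T_g(h.p))=ghg'^{-1}$-type expressions. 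The cocycle identity (3) shows that the chart pushforward of $T_g$ is a left translation composed with a right translation. Both preserve $\Haar$ by unimodularity, and this is where unimodularity enters.

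\emph{Main obstacle.} The expected difficulty is purely one of bookkeeping. One must make sure axiom~(3) applies on each small chart piece, i.e.\ that all the relevant products lie in $\operatorname{dom}(\vartheta)$. I would handle this by shrinking each piece until the required domain conditions hold, which is possible because $\operatorname{dom}(\vartheta)$ is open.
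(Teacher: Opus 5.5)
Your proposal is correct, but its main line differs from the paper's proof. For measure preservation you invoke \cref{lmp} directly: its hypothesis $\{g\}\times E\subseteq\operatorname{dom}(\vartheta)$ is exactly $E\subset M[g]$, and unimodularity is available, so $\vol_M(T_gE)=\vol_M(E)$ follows at once. The paper does not cite \cref{lmp}. It re-derives the identity from the chart definition of the canonical measure, in four steps:
\begin{enumerate}
\item around each point of $M[g]$ it takes a local orbit chart in which $T_g$ is left multiplication by $g$;
\item it uses left invariance of $\Haar$, which holds by unimodularity;
\item it covers $E$ by countably many such charts and refines the cover to a disjoint Borel partition;
\item it concludes by injectivity of $T_g$ and countable additivity.
\end{enumerate}
That is essentially your fallback. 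The only change is that you write $T_g$, in the charts at $p$ and $g.p$, as the conjugation $h\mapsto ghg^{-1}$, rather than as left multiplication in a single orbit chart at $p$. Both forms need unimodularity, and your remark about shrinking pieces so that axiom~(3) applies is exactly the bookkeeping required.

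For the Borel claim, the paper uses Lusin--Souslin to see that $T_gE$ is Borel. You instead note that $T_g$ and $T_{g^{-1}}$ are mutually inverse continuous maps between the open sets $M[g]$ and $M[g^{-1}]$, so $T_g$ is a homeomorphism. This is more elementary and gives a stronger conclusion. It does rely on joint continuity of $\vartheta$, which is not written into \cref{defn} but is assumed elsewhere in the paper (e.g.\ in \cref{lem:borel-root}). The Lusin--Souslin route would need only Borel measurability and injectivity of $T_g$. You are also right that bijectivity uses only axiom~(2); the multiplication axiom the paper mentions is not needed there.

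In short, your route shows the lemma is just \cref{lmp} plus axiom~(2). The paper's route is self-contained down to the chart definition of $\vol_M$.
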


\begin{proof}
	The inverse and multiplication axioms for a local action give
	$T_{g^{-1}}T_g=\mathrm{id}$ and $T_gT_{g^{-1}}=\mathrm{id}$ wherever the
	expressions are defined. So it is enough to prove measure
	preservation. Around every point of $M[g]$ there is a local orbit chart on
	which $T_g$ is represented in group coordinates by left multiplication by
	$g$. In these coordinates the canonical measure is $\Haar$, which is left
	invariant because $G$ is unimodular. Cover a Borel set $E\subset M[g]$ by
	countably many such chart domains and refine the cover to a disjoint Borel
	partition. The images of the pieces are disjoint because $T_g$ is injective.
	Chartwise invariance and countable additivity give
	$\vol_M(T_gE)=\vol_M(E)$. Borelness of $T_gE$ follows from Lusin--Souslin's theorem.
\end{proof}

\subsection{Divergence of model volume}

\begin{lemma}\label{lem:volume-divergence}
If $G$ is noncompact, then $V_i\to\infty$.
\end{lemma}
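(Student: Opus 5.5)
The plan is to show that the model volume dominates the Haar measure of the precompact sets $U_i$, and that these Haar measures tend to infinity because $G$ is noncompact. Fix $i$ with $\eps_i<1$. Then $\vol_{M_i}(M_i[U_i])\geq(1-\eps_i)V_i>0$, so $M_i[U_i]$ is nonempty; pick a point $p\in M_i[U_i]$.

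By condition~(2) of \cref{def:local-sofic-model}, the orbit map $g\mapsto g.p$ is a homeomorphism of $U_i$ onto an open neighborhood $U_i.p$ of $p$. Its inverse is a chart centered at $p$ in the sense of \cref{def:chart}: its domain is $U_i.p$, its range is $U_i$, and it sends $g.p$ to $g$. By the defining property of the canonical measure,
\[
V_i=\vol_{M_i}(M_i)\geq\vol_{M_i}(U_i.p)=\Haar(U_i).
\]
Note that we only need the chart formula applied to a single chart, so no consistency issue arises.

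It therefore remains to show that $\Haar(U_i)\to\infty$. The sets $U_i$ are open, increasing, and exhaust $G$. Because $G$ is $\sigma$-compact (second countable and locally compact), continuity from below gives $\Haar(U_i)\uparrow\Haar(G)$. A noncompact locally compact group has infinite Haar measure. For a short argument, take a compact identity neighborhood $K$. If $\Haar(G)<\infty$, choose a maximal family of elements $g_1,\dots,g_n$ with the translates $g_jK$ pairwise disjoint. Such a family is finite, of size at most $\Haar(G)/\Haar(K)$, by left invariance. Maximality gives $G=\bigcup_j g_jKK^{-1}$, which is compact, a contradiction. Hence $V_i\geq\Haar(U_i)\to\infty$.

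The only delicate point is verifying that the inverse of the orbit map on $U_i$ really qualifies as a chart, so that the volume formula applies. Condition~(2) of the good set provides exactly the homeomorphism onto an open neighborhood of $p$, and $U_i$ is an open identity neighborhood, so this check is immediate.
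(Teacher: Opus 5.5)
Your proposal is correct and follows the paper's proof essentially step for step: you pick a point of the nonempty good set $M_i[U_i]$, use the orbit chart to get $V_i\geq\Haar(U_i)$, and conclude by continuity from below together with the fact that a noncompact group has infinite Haar measure. The paper only cites that last fact, so your short packing argument for it is a correct and self-contained addition.
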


\begin{proof}
For all sufficiently large $i$, $\eps_i<1$, and so $M_i[U_i]$ is nonempty. Choose $p_i\in M_i[U_i]$. By the definition of $M_i[U_i]$, the orbit map
\[
 U_i\longrightarrow U_i.p_i,\qquad g\longmapsto g.p_i,
\]
is a homeomorphism onto an open neighborhood of $p_i$. By the defining property of the canonical measure,
\[
 \vol_{M_i}(U_i.p_i)=\Haar(U_i).
\]
So $V_i\geq \Haar(U_i)$. Since $U_i\uparrow G$, continuity from below gives
\[
 \lim_i\Haar(U_i)=\Haar(G).
\]
A locally compact group has finite Haar measure if and only if it is compact. So $\Haar(G)=\infty$, and the result follows.
\end{proof}

\subsection{Slowly growing chart windows}

Choose symmetric precompact open neighborhoods
\[
 B_1\subset B_2\subset\cdots,\qquad \overline{B_n}\subset B_{n+1},\qquad \bigcup_nB_n=G,
\]
with $e\in B_1$. By passing to a slowly increasing integer sequence $r(i)\to\infty$, we may arrange that
\[
 W_i:=B_{r(i)}\quad\text{satisfies}\quad W_i^4\subset U_i.
\]
Define
\[
 A_i:=M_i[W_i^2],\qquad D_i:=M_i[W_i^4].
\]
Since $W_i^4\subset U_i$, the monotonicity of the good sets gives
\[
 M_i[U_i]\subset D_i\subset A_i.
\]
So we have
\begin{equation}\label{eq:deep-core-density}
 d_i:=\dbar_i(D_i)\geq1-\eps_i\longrightarrow1.
\end{equation}

The following change-of-root lemma explains the action convention used throughout the proof.

\begin{lemma}[Change of root]\label{lem:change-root}
Let $K,L\subset G$ be compact. For all sufficiently large $i$, the following holds. If $p\in D_i$, $g\in L$, and $q=g.p$, then $q\in A_i$ and
\[
 h.q=(hg).p\qquad\text{for every }h\in K.
\]
Also, the orbit maps $h\mapsto h.p$ and $h\mapsto h.q$ are injective on $W_i$.
\end{lemma}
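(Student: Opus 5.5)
The plan is to reduce everything to the axioms of the good sets $D_i=M_i[W_i^4]$ and $A_i=M_i[W_i^2]$, using only that $W_i$ exhausts $G$. Since $K$ and $L$ are compact and the $B_n$ form an increasing open cover of $G$ with $\overline{B_n}\subset B_{n+1}$, compactness gives some $n_0$ with $K\cup L\subset B_{n_0}$. Because $r(i)\to\infty$, for all sufficiently large $i$ we have $K\cup L\subset W_i$. Hence $KL\subset W_i^2$, and more generally $W_iL\subset W_i^2$. This fixes the meaning of ``sufficiently large''.

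The identity comes first. Let $p\in D_i$, $g\in L$, $h\in K$. Then $h,g\in W_i\subset W_i^4$ and $hg\in W_i^2\subset W_i^4$. Axiom (1) of \cref{def:local-sofic-model} for $U=W_i^4$ at $p$ gives that $h.(g.p)$ and $(hg).p$ are both defined and equal. So $h.q=(hg).p$.

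Next I would show $q\in A_i$, which is the main step. For condition (1) at $q$, take $a,b,ab\in W_i^2$. I apply axiom (1) at $p$ three times, for the triples $(a,bg)$, $(b,g)$ and $(ab,g)$. All elements involved lie in $W_i^3\subset W_i^4$, and the products $bg$, $abg$ lie in $W_i^3$. This gives
\[
a.(b.q)=a.(b.(g.p))=a.((bg).p)=(abg).p=(ab).(g.p)=(ab).q,
\]
with every expression defined. For condition (2) at $q$, the map $\phi_q\colon a\mapsto a.q=(ag).p$ on $W_i^2$ factors as right multiplication by $g$, which sends $W_i^2$ homeomorphically onto the open set $W_i^2g\subset W_i^3$, followed by the orbit map of $p$. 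The orbit map of $p$ is a homeomorphism from $W_i^4$ onto an open neighborhood of $p$. Its restriction to the open set $W_i^2g$ is therefore a homeomorphism onto an open subset of $M_i$, and this image contains $q=e.q$ because $e\in W_i^2$. So $\phi_q$ is a homeomorphism onto an open neighborhood of $q$, and $q\in A_i$. The only real care needed is keeping track of which products land in $W_i^4$, so that every use of axiom (1) is at the root $p\in D_i$.

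Finally, injectivity. The orbit map of $p$ is injective on $W_i\subset W_i^4$ by condition (2). The orbit map of $q$ is injective on $W_i\subset W_i^2$ by the condition (2) just established for $q\in A_i$.
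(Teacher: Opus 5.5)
Your proof is correct and follows the paper's argument essentially step for step. The common steps are: the identity $h.q=(hg).p$ from axiom (1) at $p\in D_i=M_i[W_i^4]$; condition (2) at $q$, obtained by writing $a.q=(ag).p$ and composing right multiplication by $g$ (from $W_i^2$ onto $W_i^2g$) with the orbit homeomorphism at $p$; and condition (1) at $q$ via the chain $a.(b.q)=(abg).p=(ab).q$. Your threshold $K\cup L\subset W_i$ is slightly weaker than the paper's $K\cup L\cup KL\subset W_i$, but it suffices here because $KL\subset W_i^2\subset W_i^4$. The extra requirement $KL\subset W_i$ is only needed later, in the local covariance proposition.
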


\begin{proof}
Take $i$ so large that $K\cup L\cup KL\subset W_i$. Since $p\in M_i[W_i^4]$, the local multiplication identities at $p$ hold for every product used below. In particular,
\[
 h.(g.p)=(hg).p\qquad(\text{for every } h\in K).
\]
Put $q=g.p$. Next we check $q\in A_i=M_i[W_i^2]$.

For $a\in W_i^2$ we have $ag\in W_i^3\subset W_i^4$, and so
\[
 a.q=a.(g.p)=(ag).p.
\]
Right multiplication by $g$ is a homeomorphism from $W_i^2$ onto $W_i^2g$, while the orbit map $u\mapsto u.p$ is a homeomorphism on $W_i^4$. Their composition then shows that $a\mapsto a.q$ is a homeomorphism from $W_i^2$ onto an open neighborhood of $q$. If $a,b,ab\in W_i^2$, then the same calculation, now using $b g,abg\in W_i^4$, gives
\[
 a.(b.q)=a.(bg.p)=(abg).p=(ab).(g.p)=(ab).q.
\]
So $q\in M_i[W_i^2]=A_i$. The orbit maps at $p$ and $q$ are also injective on $W_i\subset W_i^2$.
\end{proof}

\section{The Poisson configuration space and completed root views}

\begin{definition}[Poisson point process]
	Let $(X,\mathcal B,\lambda)$ be a $\sigma$-finite measure space.
	A Poisson point process with intensity measure $\lambda$ is a random
	locally finite counting measure $\eta$ on $X$ such that for every
	finite collection of pairwise disjoint measurable sets
	$A_1,\dots,A_k$ with $\lambda(A_j)<\infty$,
	\[
	\eta(A_1),\dots,\eta(A_k)
	\]
	are independent random variables and
	\[
	\eta(A_j)\sim {\rm Pois}(\lambda(A_j)).
	\]
	We write $\PPP(\lambda)$ for this law.
\end{definition}
Let $\cN(G)$ be the space of locally finite integer-valued Radon measures on $G$, with the vague topology and its Borel $\sigma$-algebra. An element can be written as
\[
\omega = \sum_{x \in P} \delta_x,
\]
where $P$ is a locally finite point set allowing multiplicities. The vague topology is characterized by
\[
\omega_n \to \omega \quad \Leftrightarrow \quad \int f \, d\omega_n \to \int f \, d\omega, \quad \forall f \in C_c(G).
\]
Equipped with this topology, $\cN(G)$ is a Polish space. For $g\in G$ define
\begin{equation}\label{eq:right-shift}
 (g\omega)(E):=\omega(Eg),\qquad E\subset G\text{ Borel}.
\end{equation}
This is a continuous left action. Unimodularity implies that $\mu_\alpha:=\PPP(\alpha\Haar)$ is invariant.

We fix a bounded compatible metric $\rho_X$ on $\cN(G)$ with the following locality property:

\begin{equation}\label{eq:local-metric}
 \text{for every }\eta>0\text{ there is compact }K\subset G\text{ such that }
 \omega|_K=\omega'|_K\Longrightarrow \rho_X(\omega,\omega')<\eta.
\end{equation}

	To define such a metric, choose a compact exhaustion
\[
K_1\subset\operatorname{int}K_2\subset\cdots,
\qquad \bigcup_{m\geq1}K_m=G.
\]
For every $m$, choose a cutoff $\chi_m\in C_c(G)$ satisfying
$0\leq\chi_m\leq1$, $\chi_m\equiv1$ on $K_m$, and
$\operatorname{supp}(\chi_m)\subseteq K_{m+1}$.  Also choose a countable
family dense, in the uniform norm, in the continuous functions supported in
$K_m$. Enumerate the union of all these dense families and all the cutoffs as
$(f_n)_{n\geq1}$, arranging that every finite initial segment has supports in
one compact set.

This family determines convergence. In fact, if
$\int f_n\,d\omega_j\to\int f_n\,d\omega$ for every $n$, then for
each $m$ the convergence of the cutoff coordinate gives
\[
\sup_j\omega_j(K_m)
\leq \sup_j\int\chi_m\,d\omega_j<\infty.
\]
The resulting uniform local-mass bound allows uniform approximation on
$K_m$ to pass from the chosen dense family to every $f\in C_c(G)$.
So we have
\[
\rho_X(\omega,\omega')
:=\sum_{n\geq1}2^{-n}
\frac{\abs{\int f_n\dd\omega-\int f_n\dd\omega'}}
{1+\abs{\int f_n\dd\omega-\int f_n\dd\omega'}}
\]
is a bounded compatible metric on $\cN(G)$. It has the required locality
property: given $\eta>0$, choose $N$ so large that
$\sum_{n>N}2^{-n}<\eta$ and let $K$ contain the supports of
$f_1,\dots,f_N$. If $\omega|_K=\omega'|_K$, then the first $N$ summands
vanish and $\rho_X(\omega,\omega')<\eta$.

For $p\in A_i$, let
\[
 \theta_{i,p}:W_i\longrightarrow W_i.p,\qquad h\longmapsto h.p.
\]
This is a homeomorphism. Its inverse gives the coordinate of a point relative to the root $p$.

\begin{lemma}[Borel root-coordinate map]\label{lem:borel-root}
For each $i$, the set
\[
 \mathscr R_i:=\{(p,x)\in A_i\times M_i:x\in W_i.p\}
\]
is Borel, and the map
\[
 c_i:\mathscr R_i\longrightarrow W_i,
 \qquad c_i(p,x)=\theta_{i,p}^{-1}(x),
\]
is Borel. It also follows that if $\cN(M_i)$ denotes the standard Borel space of locally finite integer-valued Radon measures on $M_i$, then
\[
 (p,\xi)\longmapsto (c_i(p,\cdot))_*(\xi|_{W_i.p})
\]
is a Borel map from $A_i\times\cN(M_i)$ to $\cN(W_i)$.
\end{lemma}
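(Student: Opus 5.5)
The plan is to realize $\mathscr R_i$ and $c_i$ as the image and inverse of a single injective Borel map, and then get everything from the Lusin--Souslin theorem. Define
\[
 \Phi_i:A_i\times W_i\longrightarrow A_i\times M_i,\qquad \Phi_i(p,h)=(p,h.p).
\]
The set $A_i=M_i[W_i^2]$ is Borel, since good sets are Borel in the standard formulation. The partial action $\vartheta$ is continuous on its open domain, and for $p\in A_i$ the whole of $W_i\times\{p\}$ lies in that domain. So $\Phi_i$ is a continuous map from a Borel subset of the Polish space $M_i\times G$, hence Borel.

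First I check that $\Phi_i$ is injective. If $\Phi_i(p,h)=\Phi_i(p',h')$, then $p=p'$. Then $h.p=h'.p$ with $h,h'\in W_i$, and $\theta_{i,p}$ is a homeomorphism (injective), since $p\in A_i$ and $W_i\subset W_i^2$. So $h=h'$.

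By construction the image of $\Phi_i$ is exactly $\mathscr R_i$, and $c_i$ is the second coordinate of $\Phi_i^{-1}$. Lusin--Souslin says an injective Borel map between standard Borel spaces has Borel image and Borel inverse. This gives both that $\mathscr R_i$ is Borel and that $c_i$ is Borel.

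For the measure-valued map, I use the standard fact that the Borel structure on $\cN(W_i)$ is generated by the evaluations $\nu\mapsto\nu(B)$ for Borel $B\subset W_i$. It therefore suffices to show that
\[
 (p,\xi)\longmapsto (c_i(p,\cdot))_*(\xi|_{W_i.p})(B)=\xi(\{x:(p,x)\in\mathscr R_i,\ c_i(p,x)\in B\})=\int \1_{S_B}(p,x)\,d\xi(x)
\]
is Borel, where
\[
 S_B:=\{(p,x)\in\mathscr R_i:c_i(p,x)\in B\}=\Phi_i(A_i\times B).
\]
This set is Borel in $A_i\times M_i$ by the previous step.

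The remaining claim is that for any Borel $S\subset A_i\times M_i$, the map $(p,\xi)\mapsto\int\1_S(p,x)\,d\xi(x)$ is Borel on $A_i\times\cN(M_i)$. I prove it by a monotone class argument.
\begin{itemize}
\item For rectangles $S=P\times E$, the integral is $\1_P(p)\,\xi(E)$, which is Borel.
\item The class of $S$ for which the claim holds is closed under proper differences and increasing unions. To keep values finite, I restrict $x$ to an exhaustion of $M_i$ by compact sets, where $\xi$ has finite mass because it is locally finite, and then take increasing limits.
\item Dynkin's $\pi$-$\lambda$ theorem then gives the claim for all Borel $S$.
\end{itemize}

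One point still needs checking: that the pushforward is a locally finite measure on $W_i$, so that the map really lands in $\cN(W_i)$. This holds because $\theta_{i,p}$ is a homeomorphism, so compact subsets of $W_i$ pull back to compact subsets of $W_i.p$.

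I expect the main obstacle to be bookkeeping rather than any one hard step. The key technical point is that injectivity of $\Phi_i$ lets Lusin--Souslin turn the continuity of the partial action into Borelness of the inverse chart map $c_i$. Once that is in place, the measure-valued statement is a routine monotone class extension.
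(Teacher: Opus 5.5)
Your proposal is correct and follows essentially the same route as the paper. You both use the injective continuous map $(h,p)\mapsto(p,h.p)$ with Lusin--Souslin to get Borelness of $\mathscr R_i$ and $c_i$, and a monotone class argument for $(p,\xi)\mapsto\int u(p,x)\,d\xi(x)$ for the measure-valued map. The only cosmetic difference is in how the pushforward is tested: you use evaluations $\nu\mapsto\nu(B)$ with a Dynkin argument, keeping values finite via a compact exhaustion. The paper instead tests against a countable convergence-determining family in $C_c(W_i)$ and uses the functional monotone class theorem for $[0,\infty]$-valued integrands, which avoids that finiteness bookkeeping.
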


\begin{proof}
The set $A_i=M_i[W_i^2]$ is Borel by the standard measurability of good sets in a local $G$-space. Consider
\[
 T_i:W_i\times A_i\longrightarrow A_i\times M_i,
 \qquad T_i(h,p)=(p,h.p).
\]
The partial action is continuous on its domain, so $T_i$ is continuous. It is injective because, for every $p\in A_i=M_i[W_i^2]$, the orbit map $h\mapsto h.p$ is injective on $W_i$. Both $W_i\times A_i$ and $A_i\times M_i$ are standard Borel spaces. The Lusin--Souslin theorem then implies that the image
\[
 \mathscr R_i=T_i(W_i\times A_i)
 =\{(p,x):p\in A_i,\ x\in W_i.p\}
\]
is Borel and that $T_i^{-1}:\mathscr R_i\to W_i\times A_i$ is Borel. The first coordinate of $T_i^{-1}(p,x)$ is exactly $c_i(p,x)$, so $c_i$ is Borel.

For the claim concerning point measures, we use the following elementary fact about integration against a varying point measure. If $P$ and $X$ are standard Borel spaces and $u:P\times X\to[0,\infty]$ is Borel, then
\begin{equation}\label{eq:point-measure-param-int}
 (p,\xi)\longmapsto\int_Xu(p,x)\,d\xi(x)
\end{equation}
is Borel on $P\times\cN(X)$. To verify this, first take $u=\1_{A\times B}$. Then \eqref{eq:point-measure-param-int} equals $\1_A(p)\xi(B)$, which is Borel because the evaluation maps $\xi\mapsto\xi(B)$ generate the standard Borel structure on $\cN(X)$. The class of nonnegative Borel functions for which the claim holds is closed under nonnegative linear combinations and monotone limits. The monotone class theorem then proves the claim for every nonnegative Borel $u$. Bounded signed functions follow by applying the result to the positive and negative parts.

Fix $f\in C_c(W_i)$. Extend the function
\[
 (p,x)\longmapsto \1_{\mathscr R_i}(p,x)f(c_i(p,x))
\]
by zero outside $\mathscr R_i$. It is a bounded Borel function on $A_i\times M_i$. For every $(p,\xi)$,
\begin{align*}
 &\int_{W_i}f(h)\,d\bigl[(c_i(p,\cdot))_*(\xi|_{W_i.p})\bigr](h)\\
 &\qquad=\int_{M_i}\1_{\mathscr R_i}(p,x)f(c_i(p,x))\,d\xi(x).
\end{align*}
The right-hand side is Borel in $(p,\xi)$ by the previous integration fact. It is finite because $\operatorname{supp}f$ is compact and its image $(\operatorname{supp}f).p$ is compact, while $\xi$ is locally finite.

Finally, choose a countable convergence-determining family $(f_m)$ in $C_c(W_i)$. The standard Borel structure of $\cN(W_i)$ is generated by the maps
\[
 \eta\longmapsto\int f_m\,d\eta.
\]
Since all these coordinate maps are Borel after composition with
$(p,\xi)\mapsto(c_i(p,\cdot))_*(\xi|_{W_i.p})$, the latter map is Borel as claimed.
\end{proof}

Let
\[
 \Xi_i\sim\PPP(\alpha\vol_{M_i})
\]
be a Poisson point process on $M_i$, and independently let
\[
 Z_i\sim\PPP(\alpha\Haar)
\]
be a Poisson point process on $G$. Define the random map
\[
 \Phi_i:M_i\longrightarrow\cN(G)
\]
by
\begin{equation}\label{eq:completed-root}
 \Phi_i(p):=
 \begin{cases}
 (\theta_{i,p}^{-1})_*(\Xi_i|_{W_i.p})+Z_i|_{G\setminus W_i},&p\in A_i,\\[2mm]
 Z_i,&p\notin A_i.
 \end{cases}
\end{equation}
By \cref{lem:borel-root}, the pulled-back interior point measure is jointly Borel on $A_i\times\cN(M_i)$. Restriction of a point measure to a fixed Borel set, pushforward under a Borel map, and superposition of point measures are Borel operations on the corresponding point-measure spaces. Since $A_i$ is Borel, the two cases in \eqref{eq:completed-root} then combine to show that the map from the underlying Poisson probability space times $M_i$ into $\cN(G)$, $(\omega,p)\mapsto\Phi_i(\omega,p)$, is jointly Borel.

\begin{proposition}[Exact Poisson completion lemma]\label{prop:exact-marginal}
For every $i$ and every $p\in M_i$,
\[
 \Phi_i(p)\sim\mu_\alpha.
\]
In particular, for every bounded Borel $F:\cN(G)\to\mathbb R$,
\begin{equation}\label{eq:exact-expectation}
 \mathbb E[F(\Phi_i(p))]=\int F\dd\mu_\alpha.
\end{equation}
\end{proposition}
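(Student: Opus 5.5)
For $p\notin A_i$ there is nothing to prove: $\Phi_i(p)=Z_i\sim\PPP(\alpha\Haar)=\mu_\alpha$ by construction. So the plan is to fix $p\in A_i$ and show that each of the two summands in \eqref{eq:completed-root} is a Poisson process on its own piece of $G$. We then glue them with the superposition/independence property of Poisson processes. Identity \eqref{eq:exact-expectation} follows at once from equality of laws.

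\textbf{Step 1: the interior term.} Since $\Xi_i\sim\PPP(\alpha\vol_{M_i})$, its restriction to the Borel set $W_i.p$ is $\PPP(\alpha\vol_{M_i}|_{W_i.p})$. This set is open, being the image of $W_i$ under the homeomorphism $\theta_{i,p}$. Pushing a Poisson process forward under the injective Borel map $\theta_{i,p}^{-1}$ (Borel by \cref{lem:borel-root}) gives a Poisson process with intensity the pushforward measure. This is the mapping theorem, which one checks directly from the definition: disjoint sets pull back to disjoint sets, and the counts are Poisson with the pulled-back means.

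The key identity is
\[
 (\theta_{i,p}^{-1})_*\bigl(\vol_{M_i}|_{W_i.p}\bigr)=\Haar|_{W_i}.
\]
I would obtain it from the defining property of the canonical measure. Since $p\in A_i=M_i[W_i^2]$, the map $\theta_{i,p}$ is a homeomorphism of $W_i$ onto an open neighbourhood of $p$, with $\theta_{i,p}^{-1}(g.p)=g$. Hence $\theta_{i,p}^{-1}$ is a chart centered at $p$ in the sense of \cref{def:chart}, with domain $W_i.p$ and range $W_i$. For Borel $E\subseteq W_i$ the chart formula then gives $\vol_{M_i}(E.p)=\Haar(E)$. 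Consequently the interior term has law $\PPP(\alpha\Haar|_{W_i})$.

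\textbf{Step 2: the exterior term and gluing.} The term $Z_i|_{G\setminus W_i}$ is $\PPP(\alpha\Haar|_{G\setminus W_i})$, and it is independent of $\Xi_i$, hence of the interior term. It remains to show that the sum of independent $\PPP(\lambda_1)$ and $\PPP(\lambda_2)$ processes, with $\lambda_1,\lambda_2$ supported on the disjoint sets $W_i$ and $G\setminus W_i$, is $\PPP(\lambda_1+\lambda_2)$.

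To verify this, take disjoint $A_1,\dots,A_k$ of finite Haar measure and split each as $A_j=(A_j\cap W_i)\sqcup(A_j\setminus W_i)$. The $2k$ resulting counts are mutually independent Poisson variables: within each process by the Poisson property, and across the two processes by their independence. Each $\Phi_i(p)(A_j)$ is the sum of two of these counts, so it is $\operatorname{Pois}(\alpha\Haar(A_j))$, and the $k$ sums are independent. These finite-dimensional distributions determine the law on $\cN(G)$, because the evaluation maps generate the Borel $\sigma$-algebra. Therefore $\Phi_i(p)\sim\mu_\alpha$.

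\textbf{Main obstacle.} The only substantive point is the identification of $\theta_{i,p}^{-1}$ as a chart, so that the pushforward of $\vol_{M_i}$ is exactly $\Haar|_{W_i}$ rather than merely comparable to it. This requires the homeomorphism onto an open set (condition (2) of $M_i[W_i^2]$) and the uniqueness of $\vol_M$ on chart domains. I would also check that $\Xi_i|_{W_i.p}$ is locally finite with image in $\cN(W_i)$, which holds because $\theta_{i,p}$ maps compact sets to compact sets. Everything else is standard Poisson calculus.
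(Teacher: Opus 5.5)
Your proposal is correct and follows the paper's proof step for step. You restrict $\Xi_i$ to $W_i.p$, identify $\theta_{i,p}^{-1}$ as a chart centered at $p$ (using $p\in M_i[W_i^2]$) so that the mapping theorem yields $\PPP(\alpha\Haar|_{W_i})$, and then superpose with the independent exterior process $Z_i|_{G\setminus W_i}$ over the partition $G=W_i\sqcup(G\setminus W_i)$. Your explicit finite-dimensional verification of the superposition and of the chart identity only spells out details that the paper states in a single line.
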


\begin{proof}
If $p\notin A_i$, the claim is clear. Suppose $p\in A_i$. The restriction $\Xi_i|_{W_i.p}$ is a Poisson process with intensity $\alpha\vol_{M_i}|_{W_i.p}$. By the defining property of the canonical measure and the mapping theorem for Poisson processes, its pullback through $\theta_{i,p}$ is a Poisson process on $W_i$ with intensity $\alpha\Haar|_{W_i}$. It is independent of $Z_i|_{G\setminus W_i}$, which is Poisson with intensity $\alpha\Haar|_{G\setminus W_i}$. The superposition on the disjoint partition $G=W_i\sqcup(G\setminus W_i)$ is then $\PPP(\alpha\Haar)$.

The exterior term is essential for this exact identity. Without it, the
truncated view would be supported on configurations with no points outside
$W_i$, a $\mu_\alpha$-null set. An arbitrary measurable factor map is not
controlled on such a null set, while the completed view has the correct law
on the whole configuration space.
\end{proof}

\begin{proposition}[Exact local covariance on the deep core]\label{prop:local-covariance}
Let $K,L\subset G$ be compact. For all sufficiently large $i$, for every $p\in D_i$ and $g\in L$,
\begin{equation}\label{eq:local-covariance}
 \Phi_i(g.p)|_K=(g\Phi_i(p))|_K
\end{equation}
almost surely.
\end{proposition}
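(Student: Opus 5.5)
The plan is to show that \eqref{eq:local-covariance} holds deterministically, for every realization of $(\Xi_i,Z_i)$, and not only almost surely. The idea is that both sides only see interior points of $\Xi_i$, and the change-of-root identity of \cref{lem:change-root} matches their coordinates exactly.

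\emph{Choice of $i$.} The sets $K$, $L$ and $KL$ are compact. The windows $W_i=B_{r(i)}$ increase to $G$, and $\overline{B_n}\subset B_{n+1}$, so $K\cup L\cup KL\subset W_i$ for all large $i$. Take $i$ this large, and also large enough for \cref{lem:change-root} to apply to the pair $(K,L)$. Fix $p\in D_i$ and $g\in L$, and put $q=g.p$. By \cref{lem:change-root}, $q\in A_i$, the identity $h.q=(hg).p$ holds for all $h\in K$, and the orbit maps at $p$ and $q$ are injective on $W_i$. Since $p\in D_i\subset A_i$ as well, both $\Phi_i(p)$ and $\Phi_i(q)$ are given by the first line of \eqref{eq:completed-root}.

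\emph{Right-hand side.} Let $E\subset K$ be Borel. By \eqref{eq:right-shift}, $(g\Phi_i(p))(E)=\Phi_i(p)(Eg)$. Because $Eg\subset KL\subset W_i$, the exterior term $Z_i|_{G\setminus W_i}$ gives no mass to $Eg$. Hence
\[
\Phi_i(p)(Eg)=\Xi_i\bigl(\theta_{i,p}(Eg)\bigr)=\Xi_i\bigl(\{(hg).p:h\in E\}\bigr),
\]
where the first equality uses that $\theta_{i,p}$ is a bijection from $W_i$ onto $W_i.p$.

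\emph{Left-hand side.} Since $E\subset K\subset W_i$, the exterior term again contributes nothing. Injectivity of $\theta_{i,q}$ on $W_i$ then gives
\[
\Phi_i(q)(E)=\Xi_i\bigl(\theta_{i,q}(E)\bigr)=\Xi_i\bigl(\{h.q:h\in E\}\bigr).
\]
By the change-of-root identity, $\{h.q:h\in E\}=\{(hg).p:h\in E\}$. The two counts therefore agree for every Borel $E\subset K$, which is \eqref{eq:local-covariance} for every realization.

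\emph{Main obstacle.} The only delicate point is bookkeeping. One must check that every product involved ($h$, $g$, $hg$) lies in $W_i$, so that no point is counted through the exterior process on one side and through the interior on the other. One must also check that the coordinate maps are genuinely inverse to each other, so that no multiplicity is created by non-injectivity. Both facts are supplied by the choice $K\cup L\cup KL\subset W_i$ and by \cref{lem:change-root}; the argument above uses only these.
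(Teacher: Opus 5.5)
Your proof is correct and follows the paper's argument. Both take $i$ large so that $K\cup L\cup KL\subset W_i$, invoke \cref{lem:change-root} to get $q=g.p\in A_i$ with $h.q=(hg).p$, note that the exterior process charges neither $E$ nor $Eg$, and match the counts $\Phi_i(q)(E)$ and $\Phi_i(p)(Eg)$. Your observation that the identity holds for every realization is accurate, since the paper's argument is also deterministic.
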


\begin{proof}
For large $i$, \cref{lem:change-root} applies, $K\cup KL\subset W_i$, and $q=g.p$ belongs to $A_i$. The exterior process $Z_i|_{G\setminus W_i}$ contributes no points to $K$ and no points to $Kg$. So both sides of \eqref{eq:local-covariance} are determined by $\Xi_i$.

For a Borel set $E\subset K$, the left-hand side counts $x\in\Xi_i$ for which $x=h.q$ with $h\in E$. By \cref{lem:change-root}, $h.q=(hg).p$. So this equals the number of root-$p$ coordinates in $Eg$, that is
\[
 \Phi_i(q)(E)=\Phi_i(p)(Eg)=(g\Phi_i(p))(E)
\]
by \eqref{eq:right-shift}. So the restrictions agree, as claimed.
\end{proof}

\section{Exponential concentration for local observables}

Let $K \subseteq G$ be a compact set. A bounded Borel function
\[
F : \cN(G) \to \mathbb{R}
\]
is called $K$-local if $F(\omega)$ depends only on $\omega|_K$, for any $\omega\in \cN(G)$. That is, if
$\omega|_K = \omega'|_K$,
then
$F(\omega) = F(\omega')$.

We begin with the geometric overlap estimate.

\begin{lemma}\label{lem:overlap}
Let $K \subseteq G$ be a compact set. For all sufficiently large $i$ and every $z\in M_i$,
\begin{equation}\label{eq:overlap}
 \vol_{M_i}\bigl(\{p\in D_i:z\in K.p\}\bigr)\leq \Haar(K).
\end{equation}
\end{lemma}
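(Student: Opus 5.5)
The plan is to bound the set $S_z:=\{p\in D_i:z\in K.p\}$ by showing that it is contained in a single local orbit set around $z$ whose volume equals $\Haar(K^{-1})=\Haar(K)$. The last equality uses unimodularity, which gives inversion invariance of $\Haar$. Take $i$ so large that $K\cup K^{-1}\cup KK^{-1}\subset W_i$. This is possible because $K$ is compact and the $W_i=B_{r(i)}$ exhaust $G$ by open sets.

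First, if $S_z=\varnothing$ there is nothing to prove. Otherwise fix $p_0\in S_z$, so $z=k_0.p_0$ with $k_0\in K$. By \cref{lem:change-root}, applied with the compact sets $K^{-1}K$ and $K$, we get $z\in A_i$, and $h.z=(hk_0).p_0$ for all $h$ in a compact set containing $K^{-1}$ (after enlarging $i$). Now take any $p\in S_z$, say $z=k.p$ with $k\in K$. Since $p\in D_i=M_i[W_i^4]$ and $k^{-1}\in W_i$, the local inverse axiom gives $p=k^{-1}.(k.p)=k^{-1}.z$. Hence
\[
S_z\subseteq K^{-1}.z=\theta_{i,z}(K^{-1}),
\]
which is well defined because $z\in A_i$ and $K^{-1}\subset W_i$.

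Second, the orbit map $\theta_{i,z}$ is a homeomorphism from $W_i$ onto an open neighborhood of $z$, because $z\in A_i=M_i[W_i^2]$. So its inverse restricted to $W_i.z$ is a chart centered at $z$ in the sense of \cref{def:chart}. By the defining property of the canonical measure,
\[
\vol_{M_i}(K^{-1}.z)=\Haar(K^{-1})=\Haar(K).
\]
Monotonicity then gives \eqref{eq:overlap}. Measurability of $S_z$ is not really needed, since we may read the left side as outer measure, or note that $S_z$ is Borel via \cref{lem:borel-root}.

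The main obstacle is the step $p=k^{-1}.z$. It needs the inverse axiom (2) of \cref{defn}, which holds unconditionally whenever $(k,p)$ lies in the domain. So it is automatic once $k.p$ is defined, and no use of the deep core is needed there. The genuine subtlety is making sure the chart at $z$ exists with $K^{-1}$ inside its domain. The first thing I would try is to get this via the change-of-root lemma from one point $p_0\in S_z$. That is why we fixed $p_0$ and enlarged $i$ depending only on $K$, which keeps the threshold uniform in $z$.
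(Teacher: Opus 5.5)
Your proof is correct and follows the paper's argument: you fix one $p_0$, use \cref{lem:change-root} to place $z$ in $A_i=M_i[W_i^2]$, use the inverse axiom to get $\{p\in D_i:z\in K.p\}\subseteq K^{-1}.z$, and evaluate $\vol_{M_i}(K^{-1}.z)=\Haar(K^{-1})=\Haar(K)$ in the orbit chart at $z$. The only slip is your aside that $h.z=(hk_0).p_0$ holds on a set containing $K^{-1}$, since $K^{-1}K$ contains $K^{-1}$ only when $e\in K$; this is harmless because that identity is never used, and the argument needs only $z\in A_i$.
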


\begin{proof}
Let $E_z:=\{p\in D_i:z\in K.p\}$. If $E_z=\varnothing$ there is nothing to prove. Otherwise choose $p_0\in E_z$ and $k_0\in K$ with $z=k_0.p_0$. By \cref{lem:change-root}, applied with the fixed compact set $K$, we have $z\in A_i=M_i[W_i^2]$ for all sufficiently large $i$.

For $p\in E_z$, injectivity of the orbit map at $p$ gives a unique $k(p)\in K$ such that $z=k(p).p$. The inverse axiom for the partial action gives
\[
 p=k(p)^{-1}.z.
\]
So we have
\[
 E_z\subset K^{-1}.z.
\]
Since $z\in M_i[W_i^2]$ and $K^{-1}\subset W_i^2$ for large $i$, the single orbit chart centered at $z$ is defined and injective on all of $K^{-1}$. The canonical-measure formula in this chart gives
\[
 \vol_{M_i}(E_z)\leq \vol_{M_i}(K^{-1}.z)
 =\Haar(K^{-1})=\Haar(K),
\]
where the last equality follows from unimodularity.
\end{proof}

We use the following consequence of Wu's modified logarithmic Sobolev inequality \cite{Wu}. Since the precise Bernstein form is needed later, we include the complete Herbst argument; the logarithmic Sobolev inequality used in the first step is stated in \cite[Theorem~1.1]{BachmannPeccati}.

\begin{lemma}[Poisson bounded differences]\label{lem:poisson-bd}
Let $\eta$ be a Poisson process on a finite measure space $(S,\lambda)$, and let $H=H(\eta)$ be bounded. Suppose that
\[
 \abs{H(\xi+\delta_z)-H(\xi)}\leq c
\]
for every locally finite integer-valued $\xi$ and every $z\in S$. Then, for every $t>0$,
\begin{equation}\label{eq:poisson-bd}
 \mathbb P\bigl(\abs{H-\mathbb EH}\geq t\bigr)
 \leq2\exp\left(-\frac{t^2}{2\lambda(S)c^2+2ct/3}\right).
\end{equation}
\end{lemma}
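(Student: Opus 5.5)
The plan is the standard Herbst argument driven by a modified logarithmic Sobolev inequality on Poisson space. Write $D_zH(\xi):=H(\xi+\delta_z)-H(\xi)$ for the add-one-cost operator. By symmetry, replacing $H$ by $-H$ changes neither the hypothesis nor the constant $c$, so it suffices to prove the one-sided bound
\[
\mathbb P(H-\mathbb EH\geq t)\leq\exp\left(-\frac{t^2}{2\lambda(S)c^2+2ct/3}\right).
\]
A union bound then gives the factor $2$.

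\textbf{Step 1 (entropy inequality).} Apply the Poisson modified logarithmic Sobolev inequality of Wu, in the form of \cite[Theorem~1.1]{BachmannPeccati}, to the bounded function $e^{sH}$ for $s>0$:
\[
\operatorname{Ent}(e^{sH})\leq\mathbb E\int_S\Bigl(e^{sH}\bigl(sD_zH\,e^{sD_zH}-e^{sD_zH}+1\bigr)\Bigr)\,\lambda(dz).
\]
(Equivalently, one can use Wu's form $\mathbb E\int[\phi(F+D_zF)-\phi(F)-\phi'(F)D_zF]\,d\lambda$ with $\phi(x)=x\log x$ and $F=e^{sH}$.) The function $u\mapsto ue^u-e^u+1$ is increasing for $u\geq0$ and bounded by its value at $|u|$ when $u<0$. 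Since $|sD_zH|\leq sc$, the integrand is at most $e^{sH}\,\psi(sc)$, where $\psi(v):=ve^v-e^v+1$. Because $\lambda(S)<\infty$, this yields
\[
\operatorname{Ent}(e^{sH})\leq\lambda(S)\,\psi(sc)\,\mathbb E e^{sH}.
\]

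\textbf{Step 2 (Herbst).} Set $\Lambda(s):=\log\mathbb E e^{s(H-\mathbb EH)}$. The entropy identity $\operatorname{Ent}(e^{sH})=\mathbb E e^{sH}\bigl(s\Lambda'(s)-\Lambda(s)\bigr)$ turns Step~1 into
\[
\frac{d}{ds}\bigl(\Lambda(s)/s\bigr)\leq\lambda(S)\,\psi(sc)/s^2.
\]
Since $\Lambda(s)/s\to0$ as $s\downarrow0$, integrating gives
\[
\Lambda(s)\leq\lambda(S)\,s\int_0^s\frac{\psi(uc)}{u^2}\,du=\lambda(S)\bigl(e^{sc}-1-sc\bigr)/c^2.
\]
One can check this last equality by differentiating both sides and using $\psi(v)=v\,\frac{d}{dv}(e^v-1-v)-(e^v-1-v)$.

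\textbf{Step 3 (Bernstein form).} Use the elementary inequality $e^v-1-v\leq \frac{v^2/2}{1-v/3}$ for $0\leq v<3$. Chebyshev's exponential inequality then gives
\[
\mathbb P(H-\mathbb EH\geq t)\leq\exp\Bigl(-st+\frac{\lambda(S)s^2/2}{1-sc/3}\Bigr).
\]
Choose the standard Bernstein value $s=t/(\lambda(S)c^2+ct/3)$, which satisfies $sc<3$. This produces exactly the exponent $-t^2/(2\lambda(S)c^2+2ct/3)$.

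The main obstacle is Step~1: the hypothesis only controls $|D_zH|$, so the integrand has to be bounded uniformly in the sign of $D_zH$. This requires the monotonicity of $\psi$ on $[0,\infty)$ together with the bound $\psi(-v)\leq\psi(v)$, which follows from comparing the Taylor coefficients of $\psi(v)=\sum_{k\geq2}\frac{(k-1)v^k}{k!}$. The remaining steps are routine calculus.
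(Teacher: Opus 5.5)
Your approach is the paper's: Wu's modified logarithmic Sobolev inequality in the Bachmann--Peccati form applied to $e^{sH}$, the sign-uniform bound $\psi(sD_zH)\le\psi(sc)$, and Herbst integration. You justify $\psi(-v)\le\psi(v)$ through the nonnegative Taylor coefficients; the paper uses $\psi(a)-\psi(-a)=2(a\cosh a-\sinh a)\ge0$. The only structural difference is the last step. The paper optimizes $\theta$ exactly, obtains Bennett's bound $\exp[-\lambda(S)h(v)]$, and then applies $h(v)\ge v^2/(2(1+v/3))$. You instead bound $e^v-1-v\le\frac{v^2/2}{1-v/3}$ termwise and substitute the standard Bernstein value of $s$. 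Both routes work; the paper's also yields the sharper Bennett inequality along the way.

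As written, however, Step 2 contains a computational error that breaks Step 3. The Herbst integral is
\[
s\int_0^s\frac{\psi(uc)}{u^2}\,du=e^{sc}-1-sc,
\]
not $(e^{sc}-1-sc)/c^2$. Indeed $\frac{d}{ds}\frac{e^{sc}-1-sc}{s}=\frac{\psi(sc)}{s^2}$, whereas differentiating your right-hand side gives $\psi(sc)/(c^2s^2)$, so the derivative check you propose fails.

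For $c>1$ your bound is actually false. Take $H=c\min\{\eta(S),N\}$ with $N$ large; then $\Lambda(s)$ is close to $\lambda(S)(e^{sc}-1-sc)$, which exceeds your bound.

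The error propagates. With your displayed bound $-st+\frac{\lambda(S)s^2/2}{1-sc/3}$ and $s=t/(\lambda(S)c^2+ct/3)$, the exponent is $-\bigl(1-\frac{1}{2c^2}\bigr)\frac{t^2}{\lambda(S)c^2+ct/3}$. This equals the claimed exponent only when $c=1$, and it is positive when $c<1/\sqrt2$.

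The fix is to use the correct bound
\[
\Lambda(s)\le\lambda(S)(e^{sc}-1-sc)\le\frac{\lambda(S)s^2c^2/2}{1-sc/3}.
\]
With the same $s$ one has $1-sc/3=\lambda(S)c^2/(\lambda(S)c^2+ct/3)$, and the exponent comes out exactly as $-t^2/(2\lambda(S)c^2+2ct/3)$.

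Finally, your claim $sc<3$ needs $\lambda(S)c^2>0$, and $s$ is undefined when $c=0$. The degenerate cases $c=0$ and $\lambda(S)=0$, where $H$ is almost surely constant, must be dispatched separately, as the paper does.
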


\begin{proof}
Write
\[
 D_zH(\xi):=H(\xi+\delta_z)-H(\xi).
\]
If $c=0$, then $H(\xi+\delta_z)=H(\xi)$ for every $\xi$ and $z$. Since $\lambda(S)<\infty$, the Poisson process has finitely many points almost surely, and repeated removal of its points shows that $H(\eta)=H(0)$ almost surely. The desired estimate is then trivial. From now on, assume $c>0$. We first prove the upper-tail estimate. For $\theta\geq0$, put
\[
 L(\theta):=\log\mathbb E e^{\theta H}.
\]
Because $H$ is bounded, $L$ is differentiable and all exponential moments below are finite. Wu's modified logarithmic Sobolev inequality, in the form of \cite[Theorem~1.1]{BachmannPeccati}, states that
\begin{equation}\label{eq:wu-mlsi-applied}
 \operatorname{Ent}(e^{\theta H})
 \leq
 \mathbb E\left[e^{\theta H}
 \int_S\psi(\theta D_zH)\,d\lambda(z)\right],
\end{equation}
where $\operatorname{Ent}(X):=\mathbb E[X\log X]-\mathbb E[X]\log\mathbb E(X)$ and $\psi(u):=ue^u-e^u+1$.
The function $\psi$ is nonnegative, and for every $a\geq0$ one has
\[
 \max_{|u|\leq a}\psi(u)=\psi(a).
\]
In fact, $\psi'(u)=ue^u$, so $\psi$ decreases on $(-\infty,0]$ and increases on $[0,\infty)$. Also,
\[
 \psi(a)-\psi(-a)=2(a\cosh a-\sinh a)\geq0,
\]
because the derivative of $a\cosh a-\sinh a$ is $a\sinh a\geq0$ and its value at $a=0$ is zero. Since $|D_zH|\leq c$, \eqref{eq:wu-mlsi-applied} gives
\begin{equation}\label{eq:entropy-bound-poisson}
 \operatorname{Ent}(e^{\theta H})
 \leq \lambda(S)\psi(\theta c)\mathbb E e^{\theta H}.
\end{equation}
But we also have
\[
 \frac{\operatorname{Ent}(e^{\theta H})}{\mathbb E e^{\theta H}}
 =\theta L'(\theta)-L(\theta).
\]
So we have
\begin{equation}\label{eq:Herbst-differential}
 \frac{d}{d\theta}\left(\frac{L(\theta)}{\theta}\right)
 =\frac{\theta L'(\theta)-L(\theta)}{\theta^2}
 \leq\lambda(S)\frac{\psi(\theta c)}{\theta^2}
 \qquad(\theta>0).
\end{equation}
Since $L(\theta)/\theta\to\mathbb EH$ as $\theta\downarrow0$, integration of \eqref{eq:Herbst-differential} from $0$ to $\theta$ gives
\begin{equation}\label{eq:poisson-mgf-bound}
 \log\mathbb E e^{\theta(H-\mathbb EH)}
 \leq\lambda(S)\bigl(e^{\theta c}-1-\theta c\bigr).
\end{equation}
Here we used the identity
\[
 \frac{d}{d\theta}
 \left(\frac{e^{\theta c}-1-\theta c}{\theta}\right)
 =\frac{\psi(\theta c)}{\theta^2}.
\]

For $u>0$, Chernoff's inequality and \eqref{eq:poisson-mgf-bound} imply
\[
 \mathbb P(H-\mathbb EH\geq u)
 \leq\inf_{\theta>0}
 \exp\left[-\theta u+\lambda(S)(e^{\theta c}-1-\theta c)\right].
\]
If $\lambda(S)=0$, the random process is almost surely empty and the desired estimate is trivial. Assume $\lambda(S)>0$ and set
\[
 v:=\frac{u}{\lambda(S)c}.
\]
The minimizing value is $\theta=c^{-1}\log(1+v)$, and substitution gives the Bennett bound
\begin{equation}\label{eq:Bennett-upper}
 \mathbb P(H-\mathbb EH\geq u)
 \leq\exp[-\lambda(S)h(v)],
 \qquad
 h(v):=(1+v)\log(1+v)-v.
\end{equation}
The elementary inequality
\begin{equation}\label{eq:h-Bernstein}
 h(v)\geq\frac{v^2}{2(1+v/3)},\qquad v\geq0,
\end{equation}
then gives
\begin{equation}\label{eq:poisson-upper-Bernstein}
 \mathbb P(H-\mathbb EH\geq u)
 \leq
 \exp\left(-\frac{u^2}{2\lambda(S)c^2+2cu/3}\right).
\end{equation}
Apply the same argument to $-H$. Its add-one differences satisfy
\[
 |D_z(-H)|=|D_zH|\leq c,
\]
so
\[
 \mathbb P(\mathbb EH-H\geq u)
 \leq
 \exp\left(-\frac{u^2}{2\lambda(S)c^2+2cu/3}\right).
\]
Adding the two one-sided bounds proves \eqref{eq:poisson-bd}.
\end{proof}

\begin{proposition}[Concentration of local root averages]\label{prop:local-concentration}
Let $K\subset G$ be compact and let $F:\cN(G)\to[-1,1]$ be $K$-local. Define
\[
 \mathcal A_i(F):=\frac1{V_i}\int_{D_i}F(\Phi_i(p))\dd\vol_{M_i}(p).
\]
Then
\begin{equation}\label{eq:local-mean}
 \mathbb E\mathcal A_i(F)=d_i\int F\dd\mu_\alpha.
\end{equation}
Also, for every $t>0$ there is $c=c(t,K,\alpha)>0$ such that, for all sufficiently large $i$,
\begin{equation}\label{eq:local-concentration}
 \mathbb P\left(\abs{\mathcal A_i(F)-\mathbb E\mathcal A_i(F)}>t\right)
 \leq2e^{-cV_i}.
\end{equation}
The same claim holds for a function taking values in $[-1,1]$ that is
$K$-local in each coordinate of two independent completed root-view models.
\end{proposition}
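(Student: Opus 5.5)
The mean identity \eqref{eq:local-mean} comes from Fubini and \cref{prop:exact-marginal}. The map $(\omega,p)\mapsto\Phi_i(\omega,p)$ is jointly Borel and $F$ is bounded, so
\[
\mathbb E\mathcal A_i(F)=\frac1{V_i}\int_{D_i}\mathbb E[F(\Phi_i(p))]\dd\vol_{M_i}(p)=\dbar_i(D_i)\int F\dd\mu_\alpha=d_i\int F\dd\mu_\alpha .
\]

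For the concentration bound \eqref{eq:local-concentration}, I would first remove the exterior process. Take $i$ so large that $K\subset W_i$ and that \cref{lem:overlap} holds for $K$. For $p\in D_i\subset A_i$ the exterior part $Z_i|_{G\setminus W_i}$ puts no points in $K$. Since $F$ is $K$-local, $F(\Phi_i(p))$ is then a Borel function of $\Xi_i$ alone, namely of the pulled-back restriction $\Xi_i|_{K.p}$. So $\mathcal A_i(F)=H(\Xi_i)$ for a bounded Borel function $H$ on $\cN(M_i)$. Here $\Xi_i$ is Poisson on the finite measure space $(M_i,\alpha\vol_{M_i})$, of total mass $\alpha V_i$.

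Next I would bound the add-one differences. Adding a point $z$ to $\xi$ changes $F(\Phi_i(p))$ only when $z\in K.p$, and the change is then at most $2$. By \cref{lem:overlap},
\[
\abs{H(\xi+\delta_z)-H(\xi)}\le \frac{2}{V_i}\vol_{M_i}\{p\in D_i:z\in K.p\}\le \frac{2\Haar(K)}{V_i}=:c_i .
\]
To make this exact for every $\xi$, I would define the root-$p$ pulled-back configuration for arbitrary $\xi$ as in \cref{lem:borel-root}. This is exactly where the restriction to $D_i$ and the uniformity in $z$ of \cref{lem:overlap} are used.

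Then I would apply \cref{lem:poisson-bd} with $\lambda(S)=\alpha V_i$ and $c=c_i$:
\[
\mathbb P(\abs{H-\mathbb EH}\ge t)\le 2\exp\left(-\frac{t^2}{8\alpha\Haar(K)^2/V_i+4\Haar(K)t/(3V_i)}\right)=2e^{-c V_i},
\]
with
\[
c=\frac{t^2}{8\alpha\Haar(K)^2+4\Haar(K)t/3}>0 .
\]
If $\Haar(K)=0$, then $H$ is almost surely constant and the claim is trivial.

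For the two-model version, the sum of two independent Poisson processes $\Xi_i,\Xi_i'$ on two copies of $M_i$ is a single Poisson process on $M_i\sqcup M_i$ with intensity $\alpha\vol$ on each copy, so total mass $2\alpha V_i$. Adding a point in either copy still changes the average by at most $2\Haar(K)/V_i$, so the same computation applies with $\alpha$ replaced by $2\alpha$.

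The only delicate step is the measurability and the exact add-one bound for all configurations, not just almost surely. This is needed because \cref{lem:poisson-bd} is stated for every $\xi$, and it follows from the Borel root-coordinate map of \cref{lem:borel-root}.
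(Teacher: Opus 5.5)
Your proposal is correct and follows the paper's proof essentially step for step. The mean comes from Fubini and the exact marginal. The concentration comes from the add-one bound $2\Haar(K)/V_i$ via \cref{lem:overlap}, fed into \cref{lem:poisson-bd} with $\lambda(S)=\alpha V_i$, or with total mass $2\alpha V_i$ on $M_i\sqcup M_i$ for the two-copy version; this yields exactly the paper's explicit constants. Your remarks about defining $H$ on every configuration via \cref{lem:borel-root}, and about the degenerate case $\Haar(K)=0$, only make explicit points that the paper leaves implicit.
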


\begin{proof}
The expectation formula follows from \cref{prop:exact-marginal} and Fubini's theorem.
\begin{align*}
	\mathbb E\mathcal A_i(F) &= \frac{1}{V_i} \int_{D_i} \mathbb E[F(\Phi_i(p))] \dd\vol_{M_i}(p) \\
	&= \frac{1}{V_i} \int_{D_i} \left( \int F \dd\mu_\alpha \right) \mathrm{d}\vol_{M_i}(p) \\
	&= \frac{\vol_{M_i}(D_i)}{V_i} \int F \dd\mu_\alpha \\
	&= d_i \int F \dd\mu_\alpha.
\end{align*}

For large $i$, $K\subset W_i$. On $D_i$, the value $F(\Phi_i(p))$ depends only on the restriction of $\Xi_i$ to $K.p$ and is independent of $Z_i$.

Add a point at $z\in M_i$ to $\Xi_i$. The integrand can change only for roots $p\in D_i$ satisfying $z\in K.p$, and it changes by at most $2$. By \cref{lem:overlap},
\[
 \abs{D_z\mathcal A_i(F)}\leq\frac{2\vol_{M_i}\bigl(\{p\in D_i:z\in K.p\}\bigr)}{V_i}\leq\frac{2\Haar(K)}{V_i}=:c_i.
\]
The intensity measure of $\Xi_i$ has total mass $\alpha V_i$. Applying \cref{lem:poisson-bd} with $c=c_i$ and $\lambda(S)=\alpha V_i$ gives
\[
 \mathbb P\left(\abs{\mathcal A_i(F)-\mathbb E\mathcal A_i(F)}>t\right)
 \leq2\exp\left(
 -\frac{t^2V_i}{8\alpha\Haar(K)^2+4t\Haar(K)/3}
 \right).
\]
This gives the stated exponential bound. For a bounded function that is $K$-local in each coordinate of two independent models, view the pair $(\Xi_i,\Xi_i')$ as one Poisson process on the disjoint union $M_i\sqcup M_i$. Its intensity has total mass $2\alpha V_i$. Adding one point in either copy changes the normalized root average only at roots $p$ whose corresponding window $K.p$ contains that point, so the same overlap argument gives the add-one bound $2\Haar(K)/V_i$. Applying \cref{lem:poisson-bd} gives explicitly
\[
 2\exp\left(
 -\frac{t^2V_i}{16\alpha\Haar(K)^2+4t\Haar(K)/3}
 \right),
\]
which is $2e^{-cV_i}$ for a constant $c=c(t,K,\alpha)>0$.
\end{proof}

\begin{remark}[Uniformity of the concentration bound]
The exponential rate in \cref{prop:local-concentration} is uniform over all
$K$-local observables $F$ with $\|F\|_\infty\leq1$. In particular, it depends
only on the window $K$, the intensity $\alpha$, and the deviation parameter
$t$, and not on the particular observable. The analogous two-copy estimate
has the same uniformity.
\end{remark}

\section{Random microstates for arbitrary measurable factors}

Let
\[
 \pi:(\cN(G),\mu_\alpha)\longrightarrow(Y,\nu)
\]
be a measurable factor, where $(Y,\nu)$ is a compact metrizable continuous model as given by \cref{prop:compact-model}. Fix a compatible metric $\rho_Y\leq1$.

Choose a Borel representative of the measurable factor map $\pi$. For every fixed $g\in G$, the factor property gives
\[
 \pi(g\omega)=g\pi(\omega)
\]
for $\mu_\alpha$-almost every $\omega$. The failure set
\[
 N:=\{(g,\omega)\in G\times\cN(G):
 \pi(g\omega)\neq g\pi(\omega)\}
\]
is Borel because both actions and $\pi$ are Borel. Every vertical section $N_g$ has $\mu_\alpha$-measure zero. Since Haar measure is $\sigma$-finite, Tonelli's theorem gives
\[
 (\Haar\times\mu_\alpha)(N)=0.
\]
So our Borel representative satisfies
\begin{equation}\label{eq:factor-ae}
 \pi(g\omega)=g\pi(\omega)
\end{equation}
for $(\Haar\times\mu_\alpha)$-almost every $(g,\omega)$. This joint null-set statement is the form needed for the averaged equivariance estimates below. Define
\[
 \Psi_i:=\pi\circ\Phi_i:M_i\longrightarrow Y.
\]
\begin{proposition}
\label{prop:factor-marginal}
For every $i$ and every $p\in M_i$,
\[
 \Psi_i(p)\sim\nu.
\]
In other words, for every bounded Borel function $f:Y\to\mathbb R$,
\[
 \mathbb E[f(\Psi_i(p))]=\int_Y f\,d\nu.
\]
\end{proposition}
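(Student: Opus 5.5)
The statement follows directly from \cref{prop:exact-marginal} by pushing forward under the factor map. Fix $i$ and $p\in M_i$. By \cref{prop:exact-marginal}, the random configuration $\Phi_i(p)$ has law exactly $\mu_\alpha$. Since $\Psi_i(p)=\pi(\Phi_i(p))$ with $\pi$ the fixed Borel representative chosen above, the law of $\Psi_i(p)$ is the pushforward $\pi_*(\text{law of }\Phi_i(p))=\pi_*\mu_\alpha$. By the definition of a measurable factor map, $\pi_*\mu_\alpha=\nu$.

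For the expectation formulation, I take a bounded Borel $f:Y\to\mathbb R$. Then $F:=f\circ\pi$ is a bounded Borel function on $\cN(G)$, because $\pi$ is Borel. Applying \eqref{eq:exact-expectation} to $F$ and using the change-of-variables formula for pushforward measures gives
\[
 \mathbb E[f(\Psi_i(p))]=\mathbb E[F(\Phi_i(p))]=\int_{\cN(G)} f\circ\pi\,d\mu_\alpha=\int_Y f\,d\nu .
\]

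The only point requiring care is measurability, and it is mild. The map $\omega\mapsto\Phi_i(\omega,p)$ is Borel for fixed $p$, since it is a section of the jointly Borel map recorded after \eqref{eq:completed-root}. Composing with the Borel $\pi$ therefore gives a genuine random variable. The choice of representative of $\pi$ does not matter here: two representatives differ only on a $\mu_\alpha$-null set, and this set is also null for the law of $\Phi_i(p)$, because that law is exactly $\mu_\alpha$.

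This is precisely where the exterior completion $Z_i|_{G\setminus W_i}$ is used. Without it, $\Phi_i(p)$ would live on a $\mu_\alpha$-null set on which $\pi$ is uncontrolled. I do not expect any real obstacle; the substantive work was already done in \cref{prop:exact-marginal}.
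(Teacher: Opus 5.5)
Your proposal is correct and matches the paper's proof: both push the exact law $\Phi_i(p)\sim\mu_\alpha$ from \cref{prop:exact-marginal} forward under $\pi$ and use $\pi_*\mu_\alpha=\nu$. Your additional remarks, on Borel measurability of $\omega\mapsto\Phi_i(\omega,p)$ and on independence from the choice of Borel representative of $\pi$, are accurate and simply make explicit what the paper leaves implicit.
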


\begin{proof}
By \cref{prop:exact-marginal}, $\Phi_i(p)$ has law $\mu_\alpha$. This gives
\[
 (\pi\circ\Phi_i(p))_*\mathbb P=\pi_*\mu_\alpha=\nu.
\]
\end{proof}

\subsection{Empirical convergence}

\begin{lemma}\label{lem:local-L1}
For every bounded Borel function $H:\cN(G)\to\mathbb R$ and every $\eta>0$, there are a compact $K\subset G$ and a bounded $K$-local Borel function $H_K$ such that
\[
 \norm{H-H_K}_{L^1(\mu_\alpha)}<\eta,
 \qquad \norm{H_K}_\infty\leq\norm{H}_\infty.
\]
\end{lemma}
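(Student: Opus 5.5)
We will use a martingale approximation, taking the local approximant to be a conditional expectation. Fix the compact exhaustion $K_1\subset\operatorname{int}K_2\subset\cdots$ with $\bigcup_m K_m=G$ from the construction of $\rho_X$. Let $\mathcal F_m$ be the $\sigma$-algebra on $\cN(G)$ generated by the restriction map $\omega\mapsto\omega|_{K_m}$, i.e.\ by the evaluations $\omega\mapsto\omega(E)$ for Borel $E\subset K_m$. These form an increasing filtration. We set
\[
 H_m:=\mathbb E_{\mu_\alpha}[H\mid\mathcal F_m],
\]
and then check three things: $H_m$ is $K_m$-local, $\norm{H_m}_\infty\leq\norm H_\infty$, and $H_m\to H$ in $L^1(\mu_\alpha)$. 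Taking $K=K_m$ with $m$ large then gives the lemma.

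For the $L^1$ convergence, we need $\bigvee_m\mathcal F_m$ to be the full Borel $\sigma$-algebra of $\cN(G)$. This holds because the Borel structure is generated by the maps $\omega\mapsto\int f\,d\omega$ with $f\in C_c(G)$, and each such $f$ is supported in some $K_m$. Indeed, $\operatorname{supp}f$ is compact and is covered by the increasing open sets $\operatorname{int}K_{m+1}$. Lévy's upward martingale convergence theorem (or density of $\bigcup_m L^1(\mathcal F_m)$ in $L^1$) then gives $H_m\to H$ in $L^1(\mu_\alpha)$.

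The main subtlety is making $H_m$ an \emph{everywhere} defined, bounded, Borel, $K_m$-local function. A conditional expectation is only defined almost everywhere, and locality must hold pointwise. Here the independence property of the Poisson process gives an explicit version. Under $\mu_\alpha$, $\omega|_{K_m}$ and $\omega|_{G\setminus K_m}$ are independent, with laws $\PPP(\alpha\Haar|_{K_m})$ and $\PPP(\alpha\Haar|_{G\setminus K_m})$. So we define
\[
 H_m(\omega):=\int H\bigl(\omega|_{K_m}+\xi\bigr)\,d\PPP(\alpha\Haar|_{G\setminus K_m})(\xi).
\]
This integral is defined for every $\omega$, and it is bounded by $\norm H_\infty$. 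It is Borel in $\omega$ by Fubini's theorem, since restriction and superposition are Borel operations, as already noted after \eqref{eq:completed-root}. It is visibly $K_m$-local, and the independence shows that it is a version of $\mathbb E[H\mid\mathcal F_m]$.

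This construction is the step we would check most carefully. The rest is standard.
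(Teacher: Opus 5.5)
Your proof is correct and is essentially the paper's argument. You use the same filtration $\mathcal F_m$ generated by restriction to a compact exhaustion, the same observation that these $\sigma$-algebras generate $\cB(\cN(G))$, and martingale convergence to get $H_m\to H$ in $L^1(\mu_\alpha)$. The one difference is how you produce an everywhere-defined $K_m$-local Borel version. The paper uses the Doob--Dynkin lemma and then truncates to $[-\norm{H}_\infty,\norm{H}_\infty]$. You instead integrate out an independent exterior process $\PPP(\alpha\Haar|_{G\setminus K_m})$, which gives locality and the bound $\abs{H_m}\leq\sup\abs{H}$ pointwise without modification (if $\norm{H}_\infty$ means an essential supremum, you would still truncate). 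The cost is that your step uses the Poisson independence structure, which the paper's version of this step does not need.
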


\begin{proof}
Choose an increasing compact exhaustion
\[
 K_1\subseteq K_2\subseteq\cdots,
 \qquad \bigcup_{n\geq1}K_n=G,
\]
and let $\mathscr F_n$ be the $\sigma$-algebra generated by the restriction map
\[
 r_{K_n}:\omega\longmapsto\omega|_{K_n}.
\]
Compact restrictions generate the Borel $\sigma$-algebra of the configuration
space, so
\[
 \bigvee_{n\geq1}\mathscr F_n=\cB(\cN(G)).
\]
Let $H_n:=\mathbb E(H\mid\mathscr F_n)$.
The martingale convergence theorem gives $H_n\to H$ in $L^1(\mu_\alpha)$. Conditional expectation is a contraction on $L^\infty$, so
\[
 \|H_n\|_\infty\leq\|H\|_\infty.
\]
Choose $n$ such that $\|H-H_n\|_1<\eta$. Since $\mathscr F_n$ is generated by the Borel restriction map
\[
 r_{K_n}:\cN(G)\to\cN(K_n),\qquad\omega\mapsto\omega|_{K_n},
\]
the Doob--Dynkin lemma supplies a Borel function $\widetilde H_n$ on $\cN(K_n)$ such that
\[
 H_n(\omega)=\widetilde H_n(\omega|_{K_n})
\]
for $\mu_\alpha$-almost every $\omega$. After truncating $\widetilde H_n$ to the interval $[-\|H\|_\infty,\|H\|_\infty]$, this equality and the norm bound remain valid. Define
\[
 H_K(\omega):=\widetilde H_n(\omega|_{K_n}),\qquad K:=K_n.
\]
Then $H_K$ is Borel, $K$-local, satisfies $\|H_K\|_\infty\leq\|H\|_\infty$, and obeys $\|H-H_K\|_1<\eta$.
\end{proof}

\begin{proposition}\label{prop:empirical-factor}
For every weak-star neighborhood $\cO$ of $\nu$, when $i \to \infty$,
\[
 \mathbb P\bigl((\Psi_i)_*\dbar_i\in\cO\bigr)\longrightarrow1.
\]
\end{proposition}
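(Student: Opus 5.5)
The plan is to reduce the weak-star condition to finitely many continuous test functions and then prove concentration for each one. A basic weak-star neighborhood of $\nu$ has the form $\{\lambda:\abs{\int f_j\,d\lambda-\int f_j\,d\nu}<\gamma,\ j=1,\dots,m\}$ with $f_j\in C(Y)$ and $\norm{f_j}_\infty\leq1$. It is enough to treat one such neighborhood, and then one function $f$, because a union bound over finitely many $j$ preserves the convergence to $1$. So we must show that
\[
 \frac1{V_i}\int_{M_i}f(\Psi_i(p))\,d\vol_{M_i}(p)\longrightarrow\int f\,d\nu
\]
in probability. The integral over $M_i\setminus D_i$ has absolute value at most $1-d_i\to0$ by \eqref{eq:deep-core-density}, so it suffices to control
\[
 \mathcal A_i(H):=\frac1{V_i}\int_{D_i}H(\Phi_i(p))\,d\vol_{M_i}(p),\qquad H:=f\circ\pi .
\]
Here $H:\cN(G)\to[-1,1]$ is bounded and Borel, with $\int H\,d\mu_\alpha=\int f\,d\nu$, but it is not local. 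Since $\pi$ is arbitrary, $H$ need not be local in any sense.

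Fix $\eta>0$. By \cref{lem:local-L1}, there are a compact $K$ and a $K$-local $H_K$ with values in $[-1,1]$ such that $\norm{H-H_K}_{L^1(\mu_\alpha)}<\eta$. We split
\[
 \mathcal A_i(H)-d_i\int H\,d\mu_\alpha
 =\bigl(\mathcal A_i(H)-\mathcal A_i(H_K)\bigr)
 +\bigl(\mathcal A_i(H_K)-\mathbb E\mathcal A_i(H_K)\bigr)
 +\Bigl(d_i\int H_K\,d\mu_\alpha-d_i\int H\,d\mu_\alpha\Bigr).
\]
The second equality uses \eqref{eq:local-mean}. The third term is at most $\eta$ in absolute value. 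The middle term is controlled by \cref{prop:local-concentration}: its probability of exceeding $\eta$ is at most $2e^{-cV_i}$, which tends to $0$ because $V_i\to\infty$ by \cref{lem:volume-divergence}. Convergence in probability would already suffice even without the exponential rate.

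The first term is where the exact marginal law is essential, and it is the main point of the argument. By Fubini and \cref{prop:exact-marginal},
\[
 \mathbb E\abs{\mathcal A_i(H)-\mathcal A_i(H_K)}
 \leq\frac1{V_i}\int_{D_i}\mathbb E\abs{(H-H_K)(\Phi_i(p))}\,d\vol_{M_i}(p)
 \leq\norm{H-H_K}_{L^1(\mu_\alpha)}<\eta .
\]
This step is legitimate only because each completed root view has law exactly $\mu_\alpha$, so the a.e.-defined nonlocal $H$ is evaluated on a non-null set. Measurability of $(\omega,p)\mapsto\Phi_i(\omega,p)$ is needed here, and it was established after \eqref{eq:completed-root}. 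Markov's inequality then gives $\mathbb P(\abs{\mathcal A_i(H)-\mathcal A_i(H_K)}>\sqrt\eta)\leq\sqrt\eta$.

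Combining the three terms with $d_i\to1$: for all large $i$, the empirical integral of $f$ lies within $3\sqrt\eta+(1-d_i)\cdot 2$ of $\int f\,d\nu$ with probability at least $1-\sqrt\eta-2e^{-cV_i}$. Choosing $\eta$ small relative to $\gamma$ and then letting $i\to\infty$ gives $\liminf_i\mathbb P\geq1-\sqrt\eta$. Since $\eta$ is arbitrary, the probability tends to $1$. A finite union bound over $f_1,\dots,f_m$ completes the proof.

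The only subtlety I expect is that this approach gives only a non-exponential rate for $\mathcal A_i(H)$ itself. That is enough for the statement as given, but later packing arguments might require more.
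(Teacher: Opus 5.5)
Your proposal is correct and takes essentially the same approach as the paper. You reduce to finitely many test functions $f_j\circ\pi$, replace each by a $K$-local approximant from \cref{lem:local-L1}, bound the approximation error in expectation via the exact marginal law and Markov's inequality, and handle the local part with \cref{prop:local-concentration} together with $d_i\to1$. The only differences are cosmetic: the paper measures the approximation error over all of $M_i$ rather than over $D_i$, and it centers the local average at $\int H_{j,K}\,d\mu_\alpha$ rather than at $d_i\int H\,d\mu_\alpha$.
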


\begin{proof}
It is enough to treat a basic weak-star neighborhood
\[
 \cO=\left\{\lambda\in\Prob(Y):
 \left|\int f_j\,d\lambda-\int f_j\,d\nu\right|<\tau
 \text{ for }1\leq j\leq m\right\},
\]
where $f_j\in C(Y)$ and $\|f_j\|_\infty\leq1$. Put $H_j:=f_j\circ\pi$. To prove convergence in probability, fix an
arbitrary $\xi>0$. Choose $\eta>0$ so small that
\begin{equation}\label{eq:empirical-eta-choice}
 \sqrt\eta+2\eta<\tau/3,
 \qquad m\sqrt\eta<\xi/2.
\end{equation}
By \cref{lem:local-L1}, for each $j$ there is a compact window and a local approximation with $L^1$ error less than $\eta$. Replacing the finitely many windows by their union, we get one compact $K\subset G$ and $K$-local Borel functions $H_{j,K}$ satisfying
\[
 \|H_j-H_{j,K}\|_{L^1(\mu_\alpha)}<\eta,
 \qquad \|H_{j,K}\|_\infty\leq1
 \quad(1\leq j\leq m).
\]

Define
\[
 E_{i,j}:=\frac1{V_i}\int_{M_i}
 |H_j(\Phi_i(p))-H_{j,K}(\Phi_i(p))|\,d\vol_{M_i}(p).
\]
The exact marginal identity and Tonelli's theorem give
\[
 \mathbb E E_{i,j}
 =\int_{\cN(G)}|H_j-H_{j,K}|\,d\mu_\alpha<\eta.
\]
So we have
\begin{equation}\label{eq:empirical-approx-prob}
 \mathbb P(E_{i,j}>\sqrt\eta)<\sqrt\eta
\end{equation}
by Markov's inequality.

Set
\[
 J_{i,j}:=\frac1{V_i}\int_{D_i}H_{j,K}(\Phi_i(p))\,d\vol_{M_i}(p),
 \qquad m_{j,K}:=\int H_{j,K}\,d\mu_\alpha.
\]
By \cref{prop:local-concentration},
\[
 J_{i,j}-d_i m_{j,K}\xrightarrow{\mathbb P}0.
\]
Since $d_i\to1$, we get $J_{i,j}\to m_{j,K}$ in probability.
So there is $i_1$ such that, for every $i\geq i_1$,
\begin{equation}\label{eq:empirical-local-prob}
 \sum_{j=1}^m
 \mathbb P\left(|J_{i,j}-m_{j,K}|>\tau/3\right)<\xi/2.
\end{equation}
The contribution from the complement of the deep core satisfies
\[
 \left|\frac1{V_i}\int_{M_i\setminus D_i}
 H_{j,K}(\Phi_i(p))\,d\vol_{M_i}(p)\right|
 \leq1-d_i.
\]
Also,
\[
 |m_{j,K}-\int H_j\,d\mu_\alpha|<\eta.
\]
On the event $E_{i,j}\leq\sqrt\eta$,
\begin{align*}
 &\left|\frac1{V_i}\int_{M_i}H_j(\Phi_i(p))\,d\vol_{M_i}(p)
 -\int H_j\,d\mu_\alpha\right|\\
 &\quad\leq E_{i,j}+|J_{i,j}-m_{j,K}|+(1-d_i)+\eta.
\end{align*}
Increase $i_1$ if necessary so that
$1-d_i<\tau/3-\sqrt\eta-\eta$ for every $i\geq i_1$, which is possible by
\eqref{eq:empirical-eta-choice}. For such $i$, failure of any one of the $m$
required moment inequalities is contained in the union of the events
\[
 \{E_{i,j}>\sqrt\eta\}
 \quad\text{and}\quad
 \{|J_{i,j}-m_{j,K}|>\tau/3\},
 \qquad 1\leq j\leq m.
\]
By \eqref{eq:empirical-approx-prob},
\eqref{eq:empirical-local-prob}, and the union bound,
\[
 \mathbb P\bigl((\Psi_i)_*\dbar_i\notin\cO\bigr)
 \leq m\sqrt\eta+\xi/2<\xi.
\]
Here we used $\int H_j\,d\mu_\alpha=\int f_j\,d\nu$. Since $\xi>0$ was
arbitrary, the required probability tends to one.
\end{proof}

\subsection{Approximate equivariance}

Recall from \eqref{eq:defect} that a map $\psi:M_i\to Y$ is $(L,\delta)$-equivariant on average if
\[
 \frac1{\Haar(L)}\int_L\Delta_{\rho_Y,M_i}(g,\psi)\dd\Haar(g)<\delta.
\]

\begin{proposition}\label{prop:avg-equivariance}
For every precompact identity neighborhood $L\subset G$, when $i \to \infty$,
\[
 \frac1{\Haar(L)}\int_L
 \Delta_{\rho_Y,M_i}(g,\Psi_i)\dd\Haar(g)
 \xrightarrow{\mathbb P}0.
\]
\end{proposition}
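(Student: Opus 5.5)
Since the averaged defect is nonnegative and bounded by $1$, it suffices to show that its expectation tends to $0$; Markov's inequality then gives convergence in probability. By Tonelli's theorem,
\[
 \mathbb E\,\frac1{\Haar(L)}\int_L\Delta_{\rho_Y,M_i}(g,\Psi_i)\dd\Haar(g)
 =\frac1{\Haar(L)}\int_L\Bigl[\int_{M_i[g]}\mathbb E\,\rho_Y\bigl(\Psi_i(g.p),g\Psi_i(p)\bigr)\dd\dbar_i(p)+\dbar_i(M_i\setminus M_i[g])\Bigr]\dd\Haar(g).
\]
Joint measurability in $(g,p,\omega)$ follows from \cref{lem:borel-root} and the Borel structure of the partial action. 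Apply \cref{prop:local-covariance} with $K$ large and the compact set $\overline L$. For all large $i$, every $p\in D_i$ and $g\in L$ has $g.p$ defined, so the undefined-domain penalty is at most $1-d_i\to0$. It therefore remains to bound $\mathbb E\,\rho_Y(\pi(\Phi_i(g.p)),g\pi(\Phi_i(p)))$ for $p\in D_i$ and $g\in L$.

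For such $p$ and $g$, write $\omega:=\Phi_i(p)\sim\mu_\alpha$ and $\omega':=\Phi_i(g.p)\sim\mu_\alpha$. By \cref{prop:local-covariance}, $\omega'|_K=(g\omega)|_K$ almost surely. The key difficulty is that $\pi$ is only measurable, so agreement on $K$ does not force $\pi(\omega')$ to be close to $\pi(g\omega)$. Here I would use the exact marginals together with \cref{lem:local-L1}. Since $Y$ is compact, $\rho_Y$ is uniformly approximated by finite sums $\sum_j a_j(x)b_j(y)$ with $a_j,b_j\in C(Y)$ (Stone--Weierstrass on $Y\times Y$). It therefore suffices to control expressions of the form $\mathbb E\,|f(\pi(\omega'))-f(\pi(g\omega))|$ for finitely many $f\in C(Y)$. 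A cleaner route is to fix a countable dense family $(f_n)$ in $C(Y)$ and note that $\rho_Y(x,y)$ is small once $|f_n(x)-f_n(y)|$ is small for $n\le N$, uniformly on $Y$ by compactness.

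For each such $f$, put $H:=f\circ\pi$ and choose a $K$-local $H_K$ with $\|H-H_K\|_{L^1(\mu_\alpha)}<\eta$ (\cref{lem:local-L1}). Then
\[
 |H(\omega')-H(g\omega)|\le|H(\omega')-H_K(\omega')|+|H_K(\omega')-H_K(g\omega)|+|H_K(g\omega)-H(g\omega)|.
\]
The middle term vanishes almost surely by local covariance. The outer terms have expectation less than $\eta$, because both $\omega'$ and $g\omega$ have law $\mu_\alpha$: the first by \cref{prop:exact-marginal}, the second by invariance of $\mu_\alpha$. Finally, replace $f(g\pi(\omega))$ by $f(\pi(g\omega))$. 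By \eqref{eq:factor-ae}, $\pi(g\omega)=g\pi(\omega)$ for $\Haar\times\mu_\alpha$-almost every $(g,\omega)$. Since $\omega\sim\mu_\alpha$ for each fixed $p$, averaging over $g\in L$ makes this substitution harmless inside the $g$-integral. This is exactly why the averaged formulation is used.

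Combining these estimates, the expected averaged defect is at most $C\eta+(1-d_i)$ plus the uniform approximation error for $\rho_Y$, for all large $i$, with $K$ fixed before choosing $i$. Letting $\eta\to0$ completes the argument. I expect the main obstacle to be the order of quantifiers. One must fix the tolerance, then the finite function family and $K$, and only then take $i$ large enough that \cref{prop:local-covariance} holds uniformly on $L$, all while keeping the null-set bookkeeping for \eqref{eq:factor-ae} joint in $(g,\omega)$.
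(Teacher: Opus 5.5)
Your argument is correct, but it handles the merely measurable $\pi$ differently from the paper.

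\textbf{The paper's route.} The paper uses Lusin's theorem. It picks a compact $C\subset\cN(G)$ with $\mu_\alpha(C)>1-\eta$ on which $\pi$ is uniformly continuous. It then uses the locality property \eqref{eq:local-metric} of the specially constructed metric $\rho_X$ to turn agreement on $K$ into $\rho_X$-closeness. Finally it pays $2\eta$, by a union bound over the two exact marginals, for the event that $\Phi_i(g.p)$ or $g\Phi_i(p)$ leaves $C$.

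\textbf{Your route.} You reduce $\rho_Y$ on the compact space $Y$ to finitely many test functions, via an inequality of the form $\rho_Y(x,y)\le\eps+\sigma^{-1}\sum_{n\le N}\abs{f_n(x)-f_n(y)}$. You then approximate each $f_n\circ\pi$ by a $K$-local function using \cref{lem:local-L1}. The middle term then cancels exactly on the deep core $D_i$ by local covariance. Only the two $L^1$ errors remain, and each is controlled by the exact marginal law together with $G$-invariance of $\mu_\alpha$.

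\textbf{What each buys.} Your route needs neither Lusin's theorem nor the auxiliary local metric on $\cN(G)$. It is the same martingale device the paper itself uses later for labels in \cref{lem:local-label}. The paper's route treats $\rho_Y$ in one stroke with a single compact set and window. It thereby avoids your extra quantifier layer, in which $\eta$ must be chosen small relative to $N$ and $\sigma$ before $K$ and then $i$ are fixed.

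\textbf{One small correction.} The substitution $\pi(g\omega)=g\pi(\omega)$ does not actually rely on averaging over $g$. For each fixed $g$ the failure set $N_g$ is $\mu_\alpha$-null, and $\Phi_i(p)\sim\mu_\alpha$, so the substitution holds almost surely for every fixed pair $(g,p)$. The averaged formulation is used for a different reason. Expectation bounds for each fixed $g$ integrate directly over $L$. The pointwise space would instead require control of all $g\in L$ simultaneously, and that transfer is done separately by \cref{prop:avg-standard}.
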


\begin{proof}
Fix $\eta>0$. By Lusin's theorem, there is a compact set $C\subset\cN(G)$ such that
\[
 \mu_\alpha(C)>1-\eta
\]
and $\pi|_C$ is continuous. Since $C$ is compact, the restriction $\pi|_C$ is uniformly continuous. So there is $\delta_X>0$ such that
\begin{equation}\label{eq:lusin-modulus}
 x,x'\in C,\quad \rho_X(x,x')<\delta_X
 \quad\Longrightarrow\quad
 \rho_Y(\pi(x),\pi(x'))<\eta.
\end{equation}
By the locality property \eqref{eq:local-metric}, choose a compact set $K\subset G$ such that
\begin{equation}\label{eq:agreement-K-deltaX}
 x|_K=x'|_K\quad\Longrightarrow\quad\rho_X(x,x')<\delta_X.
\end{equation}

The set $L$ in the statement is precompact. Apply \cref{prop:local-covariance} to the two compact sets $K$ and $\overline L$. For all sufficiently large $i$, every $p\in D_i$ and every $g\in L$ satisfy
\[
 \Phi_i(g.p)|_K=(g\Phi_i(p))|_K.
\]
Combining this identity with \eqref{eq:agreement-K-deltaX}, we get
\begin{equation}\label{eq:root-close-lusin}
 \rho_X(\Phi_i(g.p),g\Phi_i(p))<\delta_X
 \qquad(p\in D_i,\ g\in L).
\end{equation}

We also justify carefully the use of the factor identity under the random root-view law. Let $N\subset G\times\cN(G)$ be the product-null failure set introduced above. For fixed $p\in M_i$, the exact marginal property gives
\begin{align*}
 &\int_L\mathbb P\bigl((g,\Phi_i(p))\in N\bigr)\,d\Haar(g)\\
 &\qquad=\int_L\int_{\cN(G)}\1_N(g,\omega)\,d\mu_\alpha(\omega)\,d\Haar(g)=0.
\end{align*}
Integrating once more over $p$ shows that, outside a
$(\Haar|_L\times\mathbb P\times\dbar_i)$-null set,
\begin{equation}\label{eq:factor-identity-random}
 g\Psi_i(p)=g\pi(\Phi_i(p))=\pi(g\Phi_i(p)).
\end{equation}

By Fubini's theorem, for $\Haar|_L\times\dbar_i$-almost every pair $(g,p)\in L\times D_i$, identity \eqref{eq:factor-identity-random} holds almost surely in the Poisson randomness. Fix such a pair $(g,p)$. If both configurations
\[
 \Phi_i(g.p)\quad\text{and}\quad g\Phi_i(p)
\]
belong to $C$, then \eqref{eq:root-close-lusin}, \eqref{eq:lusin-modulus}, and \eqref{eq:factor-identity-random} give
\begin{align*}
 \rho_Y(\Psi_i(g.p),g\Psi_i(p))
 =\rho_Y\bigl(\pi(\Phi_i(g.p)),\pi(g\Phi_i(p))\bigr)<\eta.
\end{align*}
For each fixed $p$ and $g$, the random variable $\Phi_i(g.p)$ has law $\mu_\alpha$ by \cref{prop:exact-marginal}. The variable $g\Phi_i(p)$ also has law $\mu_\alpha$, because $\Phi_i(p)\sim\mu_\alpha$ and $\mu_\alpha$ is $G$-invariant. They need not be independent, but the union bound is sufficient:
\[
 \mathbb P\bigl(\Phi_i(g.p)\notin C
 \text{ or }g\Phi_i(p)\notin C\bigr)
 \leq2\eta.
\]
Since $\rho_Y\leq1$, we get
\begin{equation}\label{eq:expected-core-defect}
 \mathbb E\,\rho_Y(\Psi_i(g.p),g\Psi_i(p))
 \leq \eta+2\eta=3\eta
 \qquad(p\in D_i,\ g\in L).
\end{equation}

For $p\in D_i=M_i[W_i^4]$ and large $i$, every $g\in L\subset W_i$ acts at $p$, so there is no undefined-domain penalty on $D_i$. On $M_i\setminus D_i$, the sum of the metric integrand on the defined part and the undefined-domain penalty is bounded by $1$. Using Tonelli's theorem and \eqref{eq:expected-core-defect}, we get
\begin{align*}
 &\mathbb E\left[
 \frac1{\Haar(L)}\int_L
 \Delta_{\rho_Y,M_i}(g,\Psi_i)\,d\Haar(g)
 \right]\\
 &\quad\leq
 \frac1{\Haar(L)}\int_L
 \left(\int_{D_i}3\eta\,d\dbar_i(p)
 +\dbar_i(M_i\setminus D_i)\right)d\Haar(g)\\
 &\quad=3\eta d_i+(1-d_i)
 \leq3\eta+(1-d_i).
\end{align*}
Letting $i\to\infty$ and using $d_i\to1$ gives
\[
 \limsup_{i\to\infty}
 \mathbb E\left[
 \frac1{\Haar(L)}\int_L
 \Delta_{\rho_Y,M_i}(g,\Psi_i)\,d\Haar(g)
 \right]
 \leq3\eta.
\]
Since $\eta>0$ was arbitrary, the expectations tend to zero. Markov's inequality then implies convergence to zero in probability, as required.
\end{proof}

Combining averaged equivariance with empirical convergence gives the random microstates used in the packing argument. The comparison theorem will later transfer the resulting lower bound to the standard model spaces.

\begin{corollary}\label{cor:good-microstates}
Let $L\subset G$ be a precompact identity neighborhood, $\delta>0$, and let $\cO$ be a weak-star neighborhood of $\nu$. Then when $i \to \infty$,
\[
 \mathbb P\left(
 \Psi_i\in\Mapavg(M_i,Y,\rho_Y:L,\delta,\cO)
 \right)\longrightarrow1.
\]
\end{corollary}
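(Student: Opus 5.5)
The plan is to combine \cref{prop:avg-equivariance} and \cref{prop:empirical-factor} by a union bound. Membership of $\Psi_i$ in $\Mapavg(M_i,Y,\rho_Y:L,\delta,\cO)$ is the intersection of two events:
\[
 \mathcal E_i^{(1)}:=\left\{\frac1{\Haar(L)}\int_L\Delta_{\rho_Y,M_i}(g,\Psi_i)\,d\Haar(g)<\delta\right\},
 \qquad
 \mathcal E_i^{(2)}:=\bigl\{(\Psi_i)_*\dbar_i\in\cO\bigr\}.
\]
It therefore suffices to show that $\mathbb P(\mathcal E_i^{(1)})\to1$ and $\mathbb P(\mathcal E_i^{(2)})\to1$, since then
\[
 \mathbb P\bigl(\Psi_i\notin\Mapavg(M_i,Y,\rho_Y:L,\delta,\cO)\bigr)
 \leq\mathbb P\bigl((\mathcal E_i^{(1)})^c\bigr)+\mathbb P\bigl((\mathcal E_i^{(2)})^c\bigr)\longrightarrow0.
\]

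For the first event, \cref{prop:avg-equivariance} states that the averaged defect converges to $0$ in probability. Taking the deviation threshold equal to $\delta$ gives $\mathbb P(\mathcal E_i^{(1)})\to1$ directly. One small point needs checking. The corollary allows $L$ to be any precompact identity neighborhood, while the definition of the model spaces uses open ones. \cref{prop:avg-equivariance} is stated for precompact identity neighborhoods and only uses compactness of $\overline L$ and finiteness and positivity of $\Haar(L)$, so it applies as stated.

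For the second event, \cref{prop:empirical-factor} gives $\mathbb P(\mathcal E_i^{(2)})\to1$ for every weak-star neighborhood $\cO$ of $\nu$.

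Before applying the union bound, I will confirm that both events are measurable in the Poisson randomness. This follows from the joint Borel measurability of $(\omega,p)\mapsto\Phi_i(\omega,p)$, which the paper established after \eqref{eq:completed-root}, composed with the Borel map $\pi$. Tonelli's theorem then makes the defect integral over $L\times M_i$ and the empirical moments $\int f\,d(\Psi_i)_*\dbar_i$ into random variables. The defect also involves the partially defined composite $p\mapsto\Psi_i(g.p)$ on $M_i[g]$, and joint measurability in $(g,p)$ of this composite follows from continuity of the partial action on its open domain.

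I do not expect a real obstacle here. The only point needing care is this measurability bookkeeping, and it is already implicit in the statements of the two propositions being combined.
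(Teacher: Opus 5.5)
Your proposal is correct and matches the paper's proof: the paper also intersects the two events, using \cref{prop:avg-equivariance} for the averaged defect (which tends to $0$ in probability, so it is below $\delta$ with probability tending to one) and \cref{prop:empirical-factor} for the empirical condition. Your extra measurability check is harmless; the paper handles that point separately, through the joint Borelness of $\Phi_i$ and \cref{lem:mapavg-borel}.
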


\begin{proof}
By \cref{prop:avg-equivariance}, the averaged equivariance condition holds with probability tending to one, and by \cref{prop:empirical-factor} the empirical measure lies in $\cO$ with probability tending to one.
\end{proof}

\section{Exponential separation in nontrivial factors}
Assume from now on that the factor is nontrivial. So $\nu$ is not a point mass.

\begin{lemma}\label{lem:separated-compacts}
There exist compact sets $C_0,C_1\subset Y$ and constants $p_0,p_1,\kappa>0$ such that
\[
 \nu(C_j)=p_j>0\quad(j=0,1),
 \qquad
 \inf\{\rho_Y(y_0,y_1):y_j\in C_j\}\geq\kappa.
\]
\end{lemma}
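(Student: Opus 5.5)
The plan is to use only that $\nu$ is a Borel probability measure on the compact metric space $(Y,\rho_Y)$ and is not a point mass, which is the standing assumption of this section. Since $\nu$ is not a Dirac mass, its support $\operatorname{supp}\nu$ contains at least two distinct points $y_0\neq y_1$. Indeed, if the support were a single point $y$, then $\nu(Y\setminus\{y\})=0$ and $\nu=\delta_y$. Here $Y$ is compact metrizable, hence second countable, so the support is well defined and has full measure.

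Set $r:=\rho_Y(y_0,y_1)>0$. Take the closed balls $C_j:=\{y\in Y:\rho_Y(y,y_j)\leq r/3\}$ for $j=0,1$. These are closed subsets of the compact space $Y$, hence compact. Each one contains the open ball of radius $r/3$ around a support point, so $p_j:=\nu(C_j)>0$ by the definition of support. For $y\in C_0$ and $y'\in C_1$, the triangle inequality gives
\[
\rho_Y(y,y')\geq\rho_Y(y_0,y_1)-\rho_Y(y,y_0)-\rho_Y(y',y_1)\geq r-\tfrac{2r}{3}=\tfrac r3 .
\]
So we can take $\kappa:=r/3$.

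There is no real obstacle. The only point to check is that nontriviality of the factor means $\nu$ is not a point mass. In the standard-space setting, a factor is trivial precisely when it is measure-conjugate to a one-point system, which happens exactly when the compact model measure is a Dirac mass. Since the section states this directly as the standing assumption, the argument above completes the proof.
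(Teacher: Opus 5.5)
Your proof is correct and follows the paper's argument: pick two distinct points of $\operatorname{supp}\nu$, take small balls around them, and separate them with the triangle inequality. The only cosmetic difference is that you use closed balls of radius $r/3$ directly, which are compact because $Y$ is compact, whereas the paper takes open balls of radius less than $d/4$ and uses inner regularity to find compact subsets of positive measure.
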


\begin{proof}
Because $\nu$ is not a point mass, its support contains two distinct points $y_0$ and $y_1$. Put
\[
 d:=\rho_Y(y_0,y_1)>0
\]
and choose $0<r<d/4$. The open balls $B_Y(y_j,r)$ have positive $\nu$-measure because $y_j\in\operatorname{supp}\nu$. By inner regularity of the Borel probability measure $\nu$ on the compact metric space $Y$, there are compact sets
\[
 C_j\subset B_Y(y_j,r)
\]
with $p_j:=\nu(C_j)>0$. If $z_j\in C_j$, then
\[
 \rho_Y(z_0,z_1)
 \geq\rho_Y(y_0,y_1)-\rho_Y(z_0,y_0)-\rho_Y(z_1,y_1)
 >d-2r>d/2.
\]
So the conclusion holds with $\kappa:=d/2$.
\end{proof}

Define
\[
 a:Y\to\{0,1,*\},\qquad
 a(y)=\begin{cases}0,&y\in C_0,\\1,&y\in C_1,\\ *,&\text{otherwise}.
 \end{cases}
\]

\begin{lemma}[Finite-window approximation of measurable factor labels]\label{lem:local-label}
For every $\eta>0$ there are a compact $K\subset G$ and a $K$-local Borel map
\[
 b:\cN(G)\to\{0,1,*\}
\]
such that
\begin{equation}\label{eq:label-error}
 \mu_\alpha\{b\neq a\circ\pi\}<\eta.
\end{equation}
Also, if $\eta<\min(p_0,p_1)/2$, then
\begin{equation}\label{eq:label-masses}
 r_j:=\mu_\alpha\{b=j\}\geq p_j-\eta>0,
 \qquad j=0,1.
\end{equation}
\end{lemma}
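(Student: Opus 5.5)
The plan is to reduce the label function to a pair of bounded indicator functions and apply \cref{lem:local-L1} to each. Put $H_j:=\1_{C_j}\circ\pi$ for $j=0,1$. These are bounded Borel functions on $\cN(G)$ with values in $\{0,1\}$, and $C_0\cap C_1=\varnothing$ by \cref{lem:separated-compacts}. Together they determine $a\circ\pi$: it equals $j$ where $H_j=1$ and $*$ where both vanish. By \cref{lem:local-L1}, applied with a small tolerance $\eta'$, there are compact windows and local approximants for $H_0$ and $H_1$. Replace the two windows by a single compact set $K$ containing both; locality on a smaller window implies locality on $K$. This gives $K$-local Borel functions $G_0,G_1$ with $\|H_j-G_j\|_{L^1(\mu_\alpha)}<\eta'$.

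Next I round to labels. Define $b(\omega):=0$ if $G_0(\omega)>1/2$. Otherwise set $b(\omega):=1$ if $G_1(\omega)>1/2$, and $b(\omega):=*$ in the remaining case. This $b$ is Borel and $K$-local, because it is a Borel function of the pair $(G_0,G_1)$, and each $G_j$ depends only on $\omega|_K$. Suppose $b(\omega)\neq a(\pi(\omega))$. Then for some $j$ the rounded value $\1\{G_j(\omega)>1/2\}$ differs from $H_j(\omega)\in\{0,1\}$. Whenever the disagreement is for $j=0$ or $j=1$ individually, this follows directly; the priority rule causes no extra error, since $H_0$ and $H_1$ are never $1$ simultaneously. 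In each such case $|G_j-H_j|\geq1/2$. Markov's inequality then gives
\[
\mu_\alpha\{b\neq a\circ\pi\}\leq\sum_{j=0,1}\mu_\alpha\{|G_j-H_j|\geq 1/2\}\leq 4\eta'.
\]
Choosing $\eta'=\eta/5$ yields \eqref{eq:label-error}.

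For \eqref{eq:label-masses}, I use $\pi_*\mu_\alpha=\nu$. This gives $\mu_\alpha\{a\circ\pi=j\}=\nu(C_j)=p_j$. Hence
\[
\mu_\alpha\{b=j\}\geq p_j-\mu_\alpha\{b\neq a\circ\pi\}> p_j-\eta,
\]
which is positive when $\eta<\min(p_0,p_1)/2$.

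I do not expect a real obstacle. The only points needing care are checking that rounding produces a genuinely Borel, $K$-local three-valued map, which holds because it is obtained by composing with $\omega\mapsto\omega|_K$, and carrying out the case analysis in the Markov bound.
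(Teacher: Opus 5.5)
Your proof is correct and follows essentially the same approach as the paper: you approximate the label indicators in $L^1(\mu_\alpha)$ by finite-window functions obtained from martingale convergence along the restriction $\sigma$-algebras (this is exactly what \cref{lem:local-L1} packages), round to a label at a factor-$2$ Markov cost, and get the mass bound from $\pi_*\mu_\alpha=\nu$. The only difference is in the rounding step: the paper takes the argmax of the three conditional probabilities $u_{0,n},u_{1,n},u_{*,n}$ after normalizing them to sum to one everywhere, whereas your threshold-at-$1/2$ rule with priority on two indicators uses the disjointness $C_0\cap C_1=\varnothing$ in place of that normalization.
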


\begin{proof}
Let
\[
 A_0:=(a\circ\pi)^{-1}(0),\qquad
 A_1:=(a\circ\pi)^{-1}(1),\qquad
 A_*:=(a\circ\pi)^{-1}(*).
\]
These three Borel sets form a partition of $\cN(G)$. Choose an increasing compact exhaustion $(K_n)$ of $G$ and write $\mathscr F_n:=\mathscr F_{K_n}$. Since $\bigvee_n\mathscr F_n=\cB(\cN(G))$, the martingale convergence theorem gives, for each $j\in\{0,1,*\}$, when $n \to \infty$,
\[
 u_{j,n}:=\mathbb E(\1_{A_j}\mid\mathscr F_n)
 \longrightarrow\1_{A_j}
 \quad\text{in }L^1(\mu_\alpha).
\]
The conditional probabilities may be chosen Borel and $K_n$-local, and they satisfy
\[
 u_{0,n}+u_{1,n}+u_{*,n}=1
\]
almost everywhere. The exceptional set is $\mathscr F_n$-measurable. Redefine the triple there to be $(1,0,0)$. This changes none of the $L^1$ classes, preserves Borelness and $K_n$-locality, and makes the displayed identity hold at every configuration. Choose $n$ so large that
\begin{equation}\label{eq:partition-martingale-error}
 2\sum_{j\in\{0,1,*\}}
 \|u_{j,n}-\1_{A_j}\|_1<\eta.
\end{equation}
Define $b(\omega)$ to be an index at which the three numbers
$u_{0,n}(\omega),u_{1,n}(\omega),u_{*,n}(\omega)$ attain their maximum, using the fixed order $0<1<*$ to break ties. Then $b$ is Borel and $K_n$-local.

We estimate its error. If $\omega\in A_j$ but $b(\omega)\neq j$, then some other conditional probability is at least $u_{j,n}(\omega)$. Since the three probabilities sum to one, this forces $u_{j,n}(\omega)\leq1/2$. This gives
\[
 \1_{A_j\cap\{b\neq j\}}(\omega)
 \leq2\1_{A_j}(\omega)(1-u_{j,n}(\omega)).
\]
After integration and summation over the three labels,
\begin{align*}
 \mu_\alpha\{b\neq a\circ\pi\}
 &\leq2\sum_j\int_{A_j}(1-u_{j,n})\,d\mu_\alpha\\
 &\leq2\sum_j\|u_{j,n}-\1_{A_j}\|_1<\eta
\end{align*}
by \eqref{eq:partition-martingale-error}. This proves \eqref{eq:label-error} with $K:=K_n$.

For $j=0,1$, the symmetric difference estimate gives
\[
 \mu_\alpha\{b=j\}
 \geq\mu_\alpha(A_j)-\mu_\alpha\{b\neq a\circ\pi\}
 >p_j-\eta.
\]
If $\eta<\min(p_0,p_1)/2$, these quantities are positive, proving \eqref{eq:label-masses}.
\end{proof}

\begin{remark}
This is the measurable analogue of finite-coordinate approximation for
Bernoulli factors. No continuity of the factor map is assumed: the argument
uses only that compact restrictions generate the standard Borel structure of
the Poisson configuration space.
\end{remark}

Let
$
 q_*:=\frac{p_0p_1}{4}>0.
$
Choose $\eta>0$ so small that
\begin{equation}\label{eq:eta-choice}
 \eta<\frac12\min(p_0,p_1),
 \qquad
 2\sqrt\eta<\frac{q_*}{2}.
\end{equation}
Apply \cref{lem:local-label} with this $\eta$, and fix the resulting compact window $K$ and local label $b$. Then \eqref{eq:label-masses} gives $r_j\geq p_j/2$ and so $r_0r_1\geq q_*$.

Let $(\Phi_i',\Psi_i')$ be an independent copy of $(\Phi_i,\Psi_i)$, built from independent Poisson processes $\Xi_i',Z_i'$. Define the local opposite-label statistic
\begin{equation}\label{eq:Zi}
 Z_i^\mathrm{opp}:=\frac1{V_i}\int_{D_i}
 \1_{\{(b(\Phi_i(p)),b(\Phi_i'(p)))\in\{(0,1),(1,0)\}\}}
 \dd\vol(p).
\end{equation}

\begin{proposition}\label{prop:opposite-label}
For all sufficiently large $i$,
\[
 \mathbb E Z_i^\mathrm{opp}=2d_i r_0r_1\geq2d_i q_*.
\]
There exists $\gamma=\gamma(K,\alpha,q_*)>0$ such that
\begin{equation}\label{eq:opposite-tail}
 \mathbb P(Z_i^\mathrm{opp}<q_*)\leq2e^{-\gamma V_i}
\end{equation}
for all sufficiently large $i$.
\end{proposition}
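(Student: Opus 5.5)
The plan is to treat $Z_i^{\mathrm{opp}}$ as a root average $\frac1{V_i}\int_{D_i}F(\Phi_i(p),\Phi_i'(p))\,d\vol(p)$ of the two-coordinate observable
\[
 F(\omega,\omega'):=\1_{\{(b(\omega),b(\omega'))\in\{(0,1),(1,0)\}\}},
\]
which takes values in $[0,1]\subset[-1,1]$ and is $K$-local in each coordinate because $b$ is $K$-local by \cref{lem:local-label}. The expectation and the tail bound then both follow from the two-copy version of \cref{prop:local-concentration}.

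\emph{Step 1 (expectation).} For each fixed $p\in M_i$, $\Phi_i(p)$ and $\Phi_i'(p)$ are built from independent Poisson processes $(\Xi_i,Z_i)$ and $(\Xi_i',Z_i')$, so they are independent. By \cref{prop:exact-marginal}, each has law $\mu_\alpha$. Hence $(b(\Phi_i(p)),b(\Phi_i'(p)))$ has independent coordinates with $\mathbb P(b=j)=r_j$, and
\[
 \mathbb E F(\Phi_i(p),\Phi_i'(p))=r_0r_1+r_1r_0=2r_0r_1 .
\]
Measurability of the integrand in $(p,\text{randomness})$ follows from the joint Borelness of $\Phi_i$ noted after \eqref{eq:completed-root}. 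Tonelli's theorem then gives $\mathbb E Z_i^{\mathrm{opp}}=\dbar_i(D_i)\cdot 2r_0r_1=2d_ir_0r_1$. The inequality $\ge 2d_iq_*$ is just $r_0r_1\ge q_*$, which follows from \eqref{eq:label-masses} and \eqref{eq:eta-choice}: $\eta<\tfrac12\min(p_0,p_1)$ gives $r_j\ge p_j/2$, so $r_0r_1\ge p_0p_1/4=q_*$.

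\emph{Step 2 (tail).} Since $d_i\to1$ by \eqref{eq:deep-core-density}, for large $i$ we have $d_i\ge 3/4$, so $\mathbb E Z_i^{\mathrm{opp}}\ge \tfrac32 q_*$. Then
\[
 \{Z_i^{\mathrm{opp}}<q_*\}\subseteq\bigl\{|Z_i^{\mathrm{opp}}-\mathbb E Z_i^{\mathrm{opp}}|>q_*/2\bigr\}.
\]
Apply the two-copy clause of \cref{prop:local-concentration} with $t=q_*/2$. View $(\Xi_i,\Xi_i')$ as a single Poisson process on $M_i\sqcup M_i$ of total intensity $2\alpha V_i$. For large $i$ we have $K\subset W_i$, so on $D_i$ the integrand depends only on $\Xi_i|_{K.p}$ and $\Xi_i'|_{K.p}$, not on the exterior processes. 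Adding a single point changes $Z_i^{\mathrm{opp}}$ by at most $2\Haar(K)/V_i$, by the overlap estimate \cref{lem:overlap}. \cref{lem:poisson-bd} then yields the bound \eqref{eq:opposite-tail} with the explicit rate
\[
 \gamma=\frac{q_*^2/4}{16\alpha\Haar(K)^2+2q_*\Haar(K)/3},
\]
which depends only on $K$, $\alpha$ and $q_*$.

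\emph{Main obstacle.} The only delicate point is justifying the two-copy concentration, namely that $Z_i^{\mathrm{opp}}$ is genuinely a function of the pooled interior process alone, so that the add-one bound applies. This holds because on $D_i$, for $i$ large enough that $K\subset W_i$, the exterior processes $Z_i$ and $Z_i'$ contribute nothing on $K$. If one prefers not to rely on the two-copy clause as stated, I would rerun its proof directly: enlarge the overlap set to count roots $p$ with $z\in K.p$ in either copy, which is still bounded by $\Haar(K)$ for each copy separately.
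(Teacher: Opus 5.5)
Your proposal is correct and follows the paper's proof essentially step for step: the expectation comes from the exact Poisson marginals, the independence of the two copies, and Tonelli's theorem, and the tail bound comes from the two-copy clause of \cref{prop:local-concentration} with $t=q_*/2$, once $d_i\geq 3/4$ forces $\mathbb E Z_i^{\mathrm{opp}}\geq 3q_*/2$. Your explicit rate $\gamma$ is exactly what the paper's displayed two-copy bound gives at $t=q_*/2$.
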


\begin{proof}
Fix $i$ large enough that $K\subset W_i$. For $p\in D_i$, the $K$-locality of $b$ and the definition of the completed root view imply that $b(\Phi_i(p))$ depends only on the restriction $\Xi_i|_{K.p}$; the exterior process contributes no point to the window $K$. The exact marginal property gives
\[
 b(\Phi_i(p))\sim b_*\mu_\alpha.
\]
The primed and unprimed constructions are independent, so the two labels at a fixed root are independent. This gives
\begin{align*}
 &\mathbb P\bigl((b(\Phi_i(p)),b(\Phi_i'(p)))
 \in\{(0,1),(1,0)\}\bigr)\\
 &\qquad=r_0r_1+r_1r_0=2r_0r_1.
\end{align*}
Tonelli's theorem now gives
\[
 \mathbb E Z_i^{\mathrm{opp}}
 =\frac{\vol_{M_i}(D_i)}{V_i}\,2r_0r_1
 =2d_i r_0r_1\geq2d_i q_*.
\]

Define on two configurations
\[
 F(\omega,\omega')
 :=\1_{\{(b(\omega),b(\omega'))\in\{(0,1),(1,0)\}\}}.
\]
This function takes values in $[0,1]$ and is local in the window $K$ in each coordinate. The two-model part of \cref{prop:local-concentration} then gives, for the fixed number $t=q_*/2$, a constant $\gamma>0$ such that
\[
 \mathbb P\left(
 |Z_i^{\mathrm{opp}}-\mathbb E Z_i^{\mathrm{opp}}|>q_*/2
 \right)\leq2e^{-\gamma V_i}
\]
for all sufficiently large $i$. Since $d_i\to1$, we may also assume $d_i\geq3/4$, and then
\[
 \mathbb E Z_i^{\mathrm{opp}}\geq2d_i q_*\geq3q_*/2.
\]
So the event $Z_i^{\mathrm{opp}}<q_*$ implies
\[
 |Z_i^{\mathrm{opp}}-\mathbb E Z_i^{\mathrm{opp}}|>q_*/2,
\]
which proves \eqref{eq:opposite-tail}.
\end{proof}

Define
\[
 R_i:=\frac1{V_i}\int_{M_i}
 \1_{\{b(\Phi_i(p))\neq a(\Psi_i(p))\}}
 \dd\vol(p),
\]
and define $R_i'$ analogously.

\begin{lemma}[Factor-label error]\label{lem:Ri}
One has
\[
 \mathbb E R_i<\eta,
 \qquad
 \mathbb P(R_i>\sqrt\eta)<\sqrt\eta.
\]
The same statements hold for $R_i'$.
\end{lemma}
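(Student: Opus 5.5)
The bound on $\mathbb E R_i$ follows from Tonelli and the exact marginal law of the completed root views, and the tail bound then follows from Markov's inequality. Write
\[
 \mathbb E R_i=\frac1{V_i}\int_{M_i}\mathbb P\bigl(b(\Phi_i(p))\neq a(\Psi_i(p))\bigr)\dd\vol(p).
\]
The interchange of expectation and integral is legitimate by Tonelli. The integrand is a nonnegative indicator, and it is jointly Borel in $(\omega,p)$: the map $(\omega,p)\mapsto\Phi_i(\omega,p)$ is jointly Borel, as noted after \eqref{eq:completed-root}, while $b$, $\pi$ and $a$ are Borel, since $C_0$ and $C_1$ are compact and hence Borel.

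Fix $p\in M_i$. Since $\Psi_i(p)=\pi(\Phi_i(p))$, the event in the integrand is $\{\Phi_i(p)\in\{\omega:b(\omega)\neq a(\pi(\omega))\}\}$. This is a Borel subset of $\cN(G)$, so \cref{prop:exact-marginal} applies. It gives the probability as $\mu_\alpha\{b\neq a\circ\pi\}$, which is $<\eta$ by \eqref{eq:label-error}.

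This bound holds for every root $p$, including those outside $A_i$ or $D_i$. Such roots need no separate treatment: there $\Phi_i(p)=Z_i$, which still has law $\mu_\alpha$. Integrating over $M_i$ against $V_i^{-1}\vol$, which is a probability measure, therefore gives $\mathbb E R_i<\eta$. The inequality stays strict because the integrand is pointwise bounded by the constant $\mu_\alpha\{b\neq a\circ\pi\}<\eta$.

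Since $R_i\geq0$, Markov's inequality gives
\[
 \mathbb P(R_i>\sqrt\eta)\leq\frac{\mathbb E R_i}{\sqrt\eta}<\sqrt\eta.
\]
The primed statements are identical. $(\Phi_i',\Psi_i')$ is built from independent Poisson processes with the same laws, so \cref{prop:exact-marginal} applies verbatim.

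There is no real obstacle here. The only point requiring care is that the label comparison is made through the exact law $\mu_\alpha$ of $\Phi_i(p)$, not through any truncated configuration. This is what allows the arbitrary measurable map $\pi$, which is controlled only $\mu_\alpha$-almost everywhere, to be used. Without the exterior completion the event would be evaluated on a $\mu_\alpha$-null set, where \eqref{eq:label-error} says nothing.
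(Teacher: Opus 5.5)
Your proposal is correct and follows the paper's proof essentially line for line. Both bound the per-root probability by $\mu_\alpha\{b\neq a\circ\pi\}<\eta$ using the exact marginal law $\Phi_i(p)\sim\mu_\alpha$, integrate over $M_i$ with Tonelli, and then apply Markov's inequality.
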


\begin{proof}
For every fixed $p\in M_i$, the exact marginal property and the definition $\Psi_i(p)=\pi(\Phi_i(p))$ give
\begin{align*}
 &\mathbb P\bigl(b(\Phi_i(p))\neq a(\Psi_i(p))\bigr)\\
 &\qquad=\mathbb P\bigl(b(\Phi_i(p))\neq a(\pi(\Phi_i(p)))\bigr)\\
 &\qquad=\mu_\alpha\{b\neq a\circ\pi\}<\eta.
\end{align*}
Tonelli's theorem then gives
\[
 \mathbb E R_i
 =\frac1{V_i}\int_{M_i}
 \mathbb P\bigl(b(\Phi_i(p))\neq a(\Psi_i(p))\bigr)
 \,d\vol_{M_i}(p)<\eta.
\]
Since $R_i\geq0$, Markov's inequality gives
\[
 \mathbb P(R_i>\sqrt\eta)
 \leq\frac{\mathbb E R_i}{\sqrt\eta}<\sqrt\eta.
\]
The primed construction has the same law, so the same argument applies to $R_i'$.
\end{proof}

\begin{lemma}\label{lem:metric-label}
Suppose
$ Z_i^\mathrm{opp}\geq q_*$,
 $R_i,R_i'\leq\sqrt\eta$.
Then
\begin{equation}\label{eq:metric-separation}
 \rho_Y^{M_i}(\Psi_i,\Psi_i')\geq
 \kappa(q_*-2\sqrt\eta)\geq\frac{\kappa q_*}{2}.
\end{equation}
\end{lemma}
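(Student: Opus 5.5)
The argument is a pointwise lower bound on the integrand of $\rho_Y^{M_i}$, followed by set inclusion and integration. By definition,
\[
 \rho_Y^{M_i}(\Psi_i,\Psi_i')=\frac1{V_i}\int_{M_i}\rho_Y(\Psi_i(p),\Psi_i'(p))\dd\vol(p),
\]
and the integrand is nonnegative. So it suffices to find a set $G_i\subset D_i$ of normalized volume at least $q_*-2\sqrt\eta$ on which the integrand is at least $\kappa$.

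Let $O\subset D_i$ be the set of roots with opposite local labels, i.e.\ $(b(\Phi_i(p)),b(\Phi_i'(p)))\in\{(0,1),(1,0)\}$. The hypothesis $Z_i^{\mathrm{opp}}\geq q_*$ says precisely that $\dbar_i(O)\geq q_*$. Let $B:=\{p:b(\Phi_i(p))\neq a(\Psi_i(p))\}$ and $B':=\{p:b(\Phi_i'(p))\neq a(\Psi_i'(p))\}$. Their normalized volumes are $R_i$ and $R_i'$, each at most $\sqrt\eta$.

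Set $G_i:=O\setminus(B\cup B')$. Then
\[
 \dbar_i(G_i)\geq q_*-2\sqrt\eta.
\]
For $p\in G_i$, the factor labels satisfy $a(\Psi_i(p))=b(\Phi_i(p))$ and $a(\Psi_i'(p))=b(\Phi_i'(p))$, so the factor labels are $\{0,1\}$ in some order. By the definition of $a$, one of $\Psi_i(p),\Psi_i'(p)$ lies in $C_0$ and the other lies in $C_1$. \Cref{lem:separated-compacts} then gives $\rho_Y(\Psi_i(p),\Psi_i'(p))\geq\kappa$.

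Integrating over $G_i$ and discarding the rest of $M_i$ yields
\[
 \rho_Y^{M_i}(\Psi_i,\Psi_i')\geq\kappa\,\dbar_i(G_i)\geq\kappa(q_*-2\sqrt\eta).
\]
The second inequality in \eqref{eq:metric-separation} follows from \eqref{eq:eta-choice}: since $2\sqrt\eta<q_*/2$, we have $q_*-2\sqrt\eta\geq q_*/2$.

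There is no real obstacle here. The only points to check are that $R_i$ is integrated over all of $M_i$ (so restricting $B$ to $D_i$ only decreases its measure), and that all the sets involved are measurable, which holds by the joint Borel measurability of $\Phi_i$ and $\Phi_i'$.
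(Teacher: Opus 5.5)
Your proof is correct and matches the paper's argument: you take the opposite-label set inside $D_i$, remove the roots where either local label disagrees with the true factor label (total normalized loss at most $R_i+R_i'\leq2\sqrt\eta$), and use \cref{lem:separated-compacts} on the surviving roots. The second inequality then follows from \eqref{eq:eta-choice}, exactly as in the paper.
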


\begin{proof}
Let $E_i\subset D_i$ be the set of roots where the two local labels are opposite. Its normalized volume is $Z_i^\mathrm{opp}\geq q_*$. Remove from $E_i$ the roots where either local label differs from the corresponding true factor label. The removed normalized volume is at most $R_i+R_i'\leq2\sqrt\eta$. At every remaining root, one target point lies in $C_0$ and the other in $C_1$. Their $\rho_Y$-distance is at least $\kappa$. Integrating proves the first inequality; the second follows from \eqref{eq:eta-choice}.
\end{proof}

\section{Proof of the main theorem}

We first record the measurability needed for the conditioning argument. Let $\mathscr L_i=L^1(M_i,Y)$ denote the space of measurable maps modulo equality almost everywhere, equipped with the metric $\rho_Y^{M_i}$ (all such maps are integrable because $\rho_Y\leq1$). By \cref{lem:mapavg-borel}, $\mathscr L_i$ is Polish and every averaged model space $\Mapavg(M_i,Y,\rho_Y:L,\delta,\cO)$ is Borel. Joint measurability of $(\omega,p)\mapsto\Psi_i(\omega,p)$ and the standard Fubini-to-$L^1$ measurability theorem imply that $\Psi_i$ is an $\mathscr L_i$-valued random variable. So every conditioning event used below is measurable.

Fix arbitrary microstate parameters: a precompact identity neighborhood $L\subset G$, $\delta>0$, and a weak-star neighborhood $\cO$ of $\nu$. Let
\[
 \mathscr M_i:=\Mapavg(M_i,Y,\rho_Y:L,\delta,\cO).
\]
Define the event
\[
 \mathcal G_i:=\{\Psi_i\in\mathscr M_i\}\cap\{R_i\leq\sqrt\eta\}.
\]
By \cref{cor:good-microstates,lem:Ri},
\begin{equation}\label{eq:good-probability}
 \liminf_i\mathbb P(\mathcal G_i)\geq1-\sqrt\eta.
\end{equation}
Set $c_0:=(1-\sqrt\eta)/2>0$. Then $\mathbb P(\mathcal G_i)\geq c_0$ for all sufficiently large $i$.
For all sufficiently large $i$, let $Q_i$ be the conditional law of $\Psi_i$ given $\mathcal G_i$. It is supported on $\mathscr M_i$.

Set
$
 \eps_0:=\frac{\kappa q_*}{4}>0.
$ We have the following result.

\begin{proposition}\label{prop:collision}
If $\psi,\psi'$ are independent with law $Q_i$, then for all sufficiently large $i$,
\begin{equation}\label{eq:collision}
 \mathbb P\bigl(\rho_Y^{M_i}(\psi,\psi')\leq\eps_0\bigr)
 \leq C_0e^{-\gamma V_i},
\end{equation}
where $C_0=2c_0^{-2}$ and $\gamma>0$ is independent of $L,\delta,\cO$.
\end{proposition}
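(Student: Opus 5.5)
The plan is to realize $Q_i\otimes Q_i$ through the unconditioned independent pair $(\Psi_i,\Psi_i')$ and then apply the separation estimate of \cref{lem:metric-label}. Let $\mathcal G_i'$ be the event defined like $\mathcal G_i$ but for the primed copy, built from $(\Xi_i',Z_i')$. The two constructions are independent and identically distributed, so conditioning the pair $(\Psi_i,\Psi_i')$ on $\mathcal G_i\cap\mathcal G_i'$ gives exactly the law $Q_i\otimes Q_i$. This is because $\mathcal G_i$ is measurable with respect to the unprimed randomness and $\mathcal G_i'$ with respect to the primed randomness. (The measurability of these events and of $\Psi_i$ as an $\mathscr L_i$-valued variable was recorded at the start of this section.) Consequently
\[
 \mathbb P\bigl(\rho_Y^{M_i}(\psi,\psi')\leq\eps_0\bigr)
 =\frac{\mathbb P\bigl(\{\rho_Y^{M_i}(\Psi_i,\Psi_i')\leq\eps_0\}\cap\mathcal G_i\cap\mathcal G_i'\bigr)}{\mathbb P(\mathcal G_i)^2}.
\]
For all sufficiently large $i$ the denominator is at least $c_0^2$, by \eqref{eq:good-probability} and the choice of $c_0$.

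Next I bound the numerator. On $\mathcal G_i\cap\mathcal G_i'$ we have $R_i,R_i'\leq\sqrt\eta$. If in addition $Z_i^{\mathrm{opp}}\geq q_*$, then \cref{lem:metric-label} gives
\[
 \rho_Y^{M_i}(\Psi_i,\Psi_i')\geq\kappa q_*/2=2\eps_0>\eps_0 .
\]
So the event in the numerator is contained in $\{Z_i^{\mathrm{opp}}<q_*\}$. By \cref{prop:opposite-label}, that event has probability at most $2e^{-\gamma V_i}$ for all sufficiently large $i$. Combining the two bounds gives \eqref{eq:collision} with $C_0=2c_0^{-2}$.

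Finally, $\gamma$ does not depend on $L,\delta,\cO$. It comes from \cref{prop:opposite-label}, so it depends only on $K$, $\alpha$ and $q_*$, and these were fixed through $\eta$, $b$ and the compacts $C_0,C_1$ before the microstate parameters were chosen. Likewise $c_0$ depends only on $\eta$. The parameters $L,\delta,\cO$ affect only how large $i$ must be: the threshold must be large enough that $\mathbb P(\mathcal G_i)\geq c_0$ and that \cref{prop:opposite-label} applies.

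The main subtlety is the identification of the conditional product law. One must check that $\mathcal G_i$ depends only on $(\Xi_i,Z_i)$, which holds because $R_i$ and $\Psi_i$ are functions of the unprimed process alone. This means conditioning on $\mathcal G_i\cap\mathcal G_i'$ does not couple the two copies. Once that is verified, the remaining steps are the two bounds above and contain no real difficulty.
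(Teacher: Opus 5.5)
Your proposal is correct and follows the paper's proof. The collision event intersected with $\mathcal G_i\cap\mathcal G_i'$ is contained in $\{Z_i^{\mathrm{opp}}<q_*\}$ by \cref{lem:metric-label}, which \cref{prop:opposite-label} bounds by $2e^{-\gamma V_i}$, and you then divide by $\mathbb P(\mathcal G_i)^2\geq c_0^2$. Your explicit check that $\mathcal G_i$ and $\mathcal G_i'$ depend on the two independent copies separately, so that the conditional law is exactly $Q_i\otimes Q_i$, spells out a step the paper leaves implicit.
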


\begin{proof}
Under the unconditioned product law, if both copies satisfy $R_i,R_i'\leq\sqrt\eta$ and their distance is at most $\eps_0<\kappa q_*/2$, then \cref{lem:metric-label} implies $Z_i^\mathrm{opp}<q_*$. This gives
\[
 \mathbb P\bigl(
 \rho_Y^{M_i}(\Psi_i,\Psi_i')\leq\eps_0,
 \mathcal G_i,\mathcal G_i'
 \bigr)
 \leq\mathbb P(Z_i^\mathrm{opp}<q_*)
 \leq2e^{-\gamma V_i}.
\]
Divide by $\mathbb P(\mathcal G_i)^2\geq c_0^2$. So conditioning on the
good-microstate event changes the collision bound only by a fixed
multiplicative factor, and the exponential decay rate in $V_i$ is preserved.
\end{proof}

\begin{proposition}\label{prop:packing}
For all sufficiently large $i$,
\[
 \Sep_{\eps_0}(\mathscr M_i,\rho_Y^{M_i})
 \geq \left\lfloor\exp\left(\frac\gamma4V_i\right)\right\rfloor.
\]
\end{proposition}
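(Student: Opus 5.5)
The plan is the standard probabilistic packing (greedy/first-moment) argument driven by the collision bound of \cref{prop:collision}. Put $N_i:=\lfloor\exp(\frac\gamma4V_i)\rfloor$ and draw $\psi_1,\dots,\psi_{N_i}$ independently with law $Q_i$. Each $\psi_k$ lies in $\mathscr M_i$ almost surely, because $Q_i$ is supported there. The aim is to show that, with positive probability, the family $\{\psi_k\}$ is pairwise $(\rho_Y^{M_i},\eps_0)$-separated. A realization of that event is then an $\eps_0$-separated subset of $\mathscr M_i$ of cardinality $N_i$. The points are distinct, since $\rho_Y^{M_i}(\psi_k,\psi_l)>\eps_0>0$ for $k\neq l$. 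This gives the bound.

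For the key estimate, apply a union bound over unordered pairs. Each pair $(\psi_k,\psi_l)$ with $k\neq l$ is an independent pair with law $Q_i\otimes Q_i$, so \cref{prop:collision} gives
\[
 \mathbb P\bigl(\exists\,k\neq l:\ \rho_Y^{M_i}(\psi_k,\psi_l)\leq\eps_0\bigr)
 \leq\binom{N_i}{2}C_0e^{-\gamma V_i}
 \leq\tfrac12C_0e^{\frac\gamma2V_i}e^{-\gamma V_i}
 =\tfrac12C_0e^{-\frac\gamma2V_i}.
\]
By \cref{lem:volume-divergence} and noncompactness, $V_i\to\infty$. Hence the right-hand side is $<1$ for all sufficiently large $i$, and the complementary event has positive probability. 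This is exactly where noncompactness enters. For large $i$ we also have $N_i\geq1$, so the statement is nonvacuous. One could even take $N_i$ up to about $e^{(\gamma/2-o(1))V_i}$, but the exponent $\gamma/4$ leaves comfortable room.

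The main obstacle is a measurability point, not the counting. The event $\{\rho_Y^{M_i}(\psi_k,\psi_l)\leq\eps_0\}$ must be measurable, and "a realization exists" must make sense. I would handle this by working in the Polish space $\mathscr L_i=L^1(M_i,Y)$ set up just before \cref{prop:collision}, on which $\rho_Y^{M_i}$ is a continuous metric. There $Q_i$ is a Borel probability measure supported on the Borel set $\mathscr M_i$ (\cref{lem:mapavg-borel}), and the pairwise-distance events are closed in $\mathscr L_i^{N_i}$. Separation is a property of a.e.-classes, and so is membership in $\mathscr M_i$. Therefore choosing representatives of the sampled classes gives genuine maps $M_i\to Y$ in $\mathscr M_i$ with the same pairwise distances. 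Finally, the threshold for "sufficiently large $i$" depends on $L,\delta,\cO$ only through the requirement $\mathbb P(\mathcal G_i)\geq c_0$ inherited from \cref{prop:collision}. The constants $\gamma$ and $\eps_0$ do not depend on them, which is what the later entropy bound needs.
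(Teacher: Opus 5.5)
Your proposal is correct and is essentially the paper's proof. Like the paper, you draw $N_i=\lfloor e^{\gamma V_i/4}\rfloor$ independent samples from $Q_i$, bound the collision probability by $\binom{N_i}{2}C_0e^{-\gamma V_i}\lesssim C_0e^{-\gamma V_i/2}$ using a union bound and \cref{prop:collision}, and let $V_i\to\infty$. Your remarks on measurability in $\mathscr L_i$, and on $\gamma,\eps_0$ not depending on $(L,\delta,\cO)$, only make explicit what the paper handles just before \cref{prop:collision} and in the proof of \cref{thm:main}.
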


\begin{proof}
Let
$
 N_i:=\left\lfloor e^{\gamma V_i/4}\right\rfloor
$
and sample $\psi_1,\dots,\psi_{N_i}$ independently from $Q_i$. By the union bound and \cref{prop:collision},
\[
 \mathbb P\left(\exists j<k:\rho_Y^{M_i}(\psi_j,\psi_k)\leq\eps_0\right)
 \leq \binom{N_i}{2}C_0e^{-\gamma V_i}
 \leq C_0e^{-\gamma V_i/2}
\]
for all sufficiently large $i$. The right-hand side tends to zero, so there exists an $(\rho_Y^{M_i},\eps_0)$-separated $N_i$-tuple in $\mathscr M_i$.
\end{proof}

Now we can prove  the main theorem.

\begin{proof}[Proof of \cref{thm:main}]
	If $G$ is discrete, then second countability makes $G$ countable and
noncompactness makes it infinite. The coordinate map
$\omega\mapsto(\omega(\{g\}))_{g\in G}$ identifies the Poisson action,
up to the standard inversion conjugacy, with the Bernoulli action having
base law $\operatorname{Pois}(\alpha)$. In this case the local-$G$-space
formalism reduces to the usual finite sofic formalism, so Kerr's theorem
\cite{KerrCPE} gives completely positive entropy for the given
approximation. From now on, assume that $G$ is nondiscrete, which is exactly the
range in which Singh's Theorem~3.4.2 applies.	
	
Let $\pi:(\cN(G),\mu_\alpha)\to(Z,\zeta)$ be a nontrivial measurable factor, and choose a compact metrizable continuous model $(Y,\nu)$ of this factor. The constants $\eps_0>0$ and $\gamma>0$ constructed above depend on the factor, on the chosen finite-window approximation of one nontrivial factor label, and on $\alpha$, but they do not depend on the microstate parameters $L,\delta,\cO$.

For every such triple, \cref{prop:packing} gives
\[
 \limsup_{i\to\infty}\frac1{V_i}
 \log\Sep_{\eps_0}
 \bigl(\Mapavg(M_i,Y,\rho_Y:L,\delta,\cO),\rho_Y^{M_i}\bigr)
 \geq\frac\gamma4.
\]
Taking the infimum over $\cO$, $L$, and $\delta$ gives
\[
 h^{\mathrm{meas,avg}}_{\Sigma,\eps_0}(G,Y,\nu,\rho_Y)
 \geq\frac\gamma4>0.
\]
Because $\eps_0$ and $\gamma$ are independent of the microstate parameters $(L,\delta,\cO)$, the positive lower bound survives the defining infima.

By \cref{prop:avg-standard},
\[
 h^{\mathrm{meas}}_{\Sigma,\eps_0}(G,Y,\nu,\rho_Y)
 =h^{\mathrm{meas,avg}}_{\Sigma,\eps_0}(G,Y,\nu,\rho_Y)>0.
\]
Since the full entropy is the limit as the separation scale decreases,
\[
 h^{\mathrm{meas}}_\Sigma(G,Y,\nu)
 =\lim_{\eps\downarrow0}h^{\mathrm{meas}}_{\Sigma,\eps}(G,Y,\nu,\rho_Y)
 \geq h^{\mathrm{meas}}_{\Sigma,\eps_0}(G,Y,\nu,\rho_Y)>0.
\]
	By \cref{cor:singh-measure-identification}, this is
Singh's standard measure sofic entropy of the factor.  Singh's
measure-conjugacy invariance shows that it is independent of the chosen
compact model. Since the factor was arbitrary, the Poisson action has
completely positive measure sofic entropy relative to $\Sigma$.
\end{proof}

\section{Necessity of the noncompactness hypothesis}

We now show that the noncompactness assumption in \cref{thm:main} is essential for the theorem as stated.

\begin{proposition}\label{prop:compact-obstruction}
Let $G$ be a nontrivial compact lcsc group and let $M_i=G$ with the global left action for every $i$. Then $(M_i)$ is a sofic approximation. For every $\alpha>0$, the Poisson process of intensity $\alpha\Haar$ does not have completely positive measure sofic entropy with respect to this approximation.
\end{proposition}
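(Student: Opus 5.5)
The plan is to exhibit one explicit nontrivial factor whose model spaces are empty for suitable parameters. Its entropy is then $-\infty$, which is in particular not $>0$.

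\emph{Step 1: $(M_i)$ is a sofic approximation.} With $M_i=G$ and the global left action, the partial action is defined everywhere. The multiplication identities hold exactly, and for every precompact open $U$ the orbit map $g\mapsto g.p=gp$ is a homeomorphism of $U$ onto the open set $Up$. Hence $M_i[U]=G$ for every $U$. The canonical measure is $\Haar$, and $0<\Haar(G)<\infty$ because $G$ is compact. So each $M_i$ is a $(U,\eps)$-sofic approximation for all $U,\eps$. Any exhaustion $U_i\uparrow G$ with $\eps_i\to0$ works, for instance $U_i=G$ eventually. Note that $V_i=\Haar(G)$ is constant, which is exactly where \cref{lem:volume-divergence} fails.

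\emph{Step 2: the nontrivial factor.} I take $N(\omega):=\omega(G)$. Since $(g\omega)(G)=\omega(Gg)=\omega(G)$, the map $N$ is $G$-invariant. It is finite $\mu_\alpha$-a.s. and has law $\zeta:=\operatorname{Pois}(\alpha\Haar(G))$. This law is not a point mass because $\alpha\Haar(G)>0$. So $N$ is a nontrivial factor onto a system with the trivial action. For the compact model I use $Y=\mathbb N\cup\{\infty\}$, the one-point compactification, with the trivial action and a compatible metric $\rho\leq1$, and put $\nu=\zeta$. This model is jointly continuous. By the model independence in \cref{def:cpe} (or, if $G$ is finite, directly through the finite sofic formalism), it suffices to compute with this model.

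\emph{Step 3: the model spaces are eventually empty.} This is the heart of the argument. Take $L=G$, which is a precompact open identity neighborhood. Since the action on $Y$ is trivial and $M_i[g]=M_i$, we get
\[
\frac1{\Haar(G)}\int_G\Delta_{\rho,M_i}(g,\psi)\,d\Haar(g)=\int_G\int_G\rho(\psi(gp),\psi(p))\,d\dbar(p)\,d\dbar(g)=\iint\rho\,d\lambda\,d\lambda ,
\]
where $\lambda:=\psi_*\dbar$. The last equality uses invariance of Haar measure under $g\mapsto gp$. Set $c:=\iint\rho\,d\nu\,d\nu$; we have $c>0$ since $\nu$ is not a point mass. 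The functional $\lambda\mapsto\iint\rho\,d(\lambda\otimes\lambda)$ is weak-star continuous on $\Prob(Y)$, because $\rho\in C(Y\times Y)$ and $\lambda\mapsto\lambda\otimes\lambda$ is continuous. So there is a weak-star neighborhood $\cO$ of $\nu$ on which this functional exceeds $c/2$. Now take $\delta<c/2$. Every $\psi\in\Mapavg(M_i,Y,\rho:G,\delta,\cO)$ would satisfy both $\iint\rho\,d\lambda\,d\lambda<c/2$ and $\lambda\in\cO$, which is a contradiction. Hence $\Mapavg$ is empty for every $i$, and therefore $\Map\subseteq\Mapavg$ is empty too. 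By the convention $\log 0=-\infty$, we obtain $h^{\mathrm{meas}}_{\Sigma,\eps}=-\infty$ for every $\eps$, so $h^{\mathrm{meas}}_\Sigma=-\infty\not>0$.

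The only delicate point is the use of the pointwise definition. I can avoid it by working with $\Mapavg$ as above and then invoking \cref{prop:avg-standard}. Alternatively I can bound directly: for every $g$, the pointwise condition gives $\int\rho(\psi(gp),\psi(p))\,d\dbar(p)<\delta$, and averaging over $g$ yields the same inequality. The second step I would double-check is the identification of entropy across compact models via \cref{cor:singh-measure-identification}. The main obstacle, namely choosing $\cO$ so that it forces a non-concentrated empirical law, is resolved by the continuity of the energy functional $\lambda\mapsto\iint\rho\,d\lambda\,d\lambda$.
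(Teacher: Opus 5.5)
Your proof is correct and follows the paper's route. You use the total-count factor $N(\omega)=\omega(G)$ on a countable compact model with trivial action ($\mathbb N\cup\{\infty\}$, homeomorphic to the paper's $Y_0$) and take $L=G$. Haar invariance and Fubini turn the averaged defect into $\iint\rho\,d\lambda\,d\lambda$. A weak-star neighborhood of $\nu$ keeps this quantity bounded below, so $\Mapavg$, and hence $\Map$, is empty and the entropy is $-\infty$.

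The only differences are minor. You get the neighborhood from weak-star continuity of $\lambda\mapsto\iint\rho\,d(\lambda\otimes\lambda)$, while the paper uses the continuous indicators of two isolated atoms $y_a,y_b$. You also check the sofic-approximation claim explicitly, which the paper's proof leaves implicit.
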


\begin{proof}
The total-point map
$
 N(\omega)=\omega(G)
$
is a nontrivial invariant factor with law $\lambda=\operatorname{Poi}(\alpha\Haar(G))$ and trivial action. To realize it on a compact target, set
\[
 Y_0:=\{0\}\cup\{y_n:n\in\mathbb N_0\},
 \qquad y_n:=\frac1{n+1},
\]
with the Euclidean metric and the trivial $G$-action. Let $\iota(n)=y_n$ and $\widetilde\lambda=\iota_*\lambda$. Since $\widetilde\lambda(\{0\})=0$, the map $\iota$ is a measure conjugacy from the count-valued factor onto the compact probability system $(Y_0,\widetilde\lambda)$.

Choose distinct $a,b\in\mathbb N_0$ with $\lambda(a),\lambda(b)>0$ and put
$
 \kappa:=|y_a-y_b|>0.
$
The points $y_a,y_b$ are isolated in $Y_0$, so their singleton indicators are continuous. We can choose a weak-star neighborhood $\cO$ of $\widetilde\lambda$ such that every $\beta\in\cO$ satisfies
\[
 \beta(\{y_a\})>\lambda(a)/2,
 \qquad
 \beta(\{y_b\})>\lambda(b)/2.
\]
Let $\overline\Haar:=\Haar/\Haar(G)$ be normalized Haar measure. Let $\varphi:G\to Y_0$ be measurable and write $\beta=\varphi_*\overline\Haar$. Since $(g,p)\mapsto(gp,p)$ preserves $\overline\Haar\times\overline\Haar$, Fubini gives
\begin{align*}
 \frac1{\Haar(G)}\int_G\rho_{Y_0}^{G}(\varphi\circ g,\varphi)\,d\Haar(g)
 &=\iint_{Y_0\times Y_0}|y-z|\,d\beta(y)d\beta(z)\\
 &\geq2\kappa\,\beta(\{y_a\})\beta(\{y_b\})\\
 &>\frac\kappa2\lambda(a)\lambda(b).
\end{align*}
The target action is trivial, so $g\circ\varphi=\varphi$. Taking $L=G$ and any
$
 0<\delta<\frac\kappa2\lambda(a)\lambda(b)
$
shows that
\[
 \Mapavg(G,Y_0,\rho_{Y_0}:G,\delta,\cO)=\varnothing.
\]
Since $\Map\subseteq\Mapavg$, the corresponding standard model space is also empty. So this nontrivial compact factor has entropy $-\infty$, and complete positivity fails for this approximation.
\end{proof}

The proof of \cref{thm:main} also shows exactly where we use noncompactness. By \cref{lem:volume-divergence}, it gives $V_i\to\infty$, which turns a local bounded-difference estimate into an exponential-in-volume packing estimate.

\medskip

\textit{Acknowledgments}.
I am grateful to my advisor, Professor Siming Tu, for introducing me to the subject and his continuous guidance, support and encouragement.

\appendix
\section{Comparison and invariance of spatial entropy }\label{app:singh-comparison}

\subsection{Invariance of spatial entropy}

\begin{lemma}[Measurability of averaged model spaces]\label{lem:mapavg-borel}
	Let $M$ be a finite-volume local $G$-space and let $Y$ be compact metrizable.
	Then $L^1(M,Y)$, equipped with $\rho^M$, is Polish. The empirical-measure
	map
	\[
	\mathsf E_M:L^1(M,Y)\longrightarrow\Prob(Y),
	\qquad \mathsf E_M(\psi)=\psi_*\overline{\vol}_M,
	\]
	is Borel (in fact continuous), the map
	\[
	(g,\psi)\longmapsto\Delta_{\rho,M}(g,\psi)
	\]
	is Borel on $G\times L^1(M,Y)$, and so
	\[
	\psi\longmapsto
	\frac1{\Haar(L)}\int_L\Delta_{\rho,M}(g,\psi)\,d\Haar(g)
	\]
	is Borel for every precompact open subset $L\subset G$. In particular,
	$\Mapavg(M,Y,\rho:L,\delta,\cO)$ is a Borel subset of $L^1(M,Y)$.
\end{lemma}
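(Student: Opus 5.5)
We must prove \cref{lem:mapavg-borel}: $L^1(M,Y)$ with $\rho^M$ is Polish, the empirical map is continuous, $(g,\psi)\mapsto\Delta_{\rho,M}(g,\psi)$ is Borel, and so the averaged model space is Borel.

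\emph{Step 1: $L^1(M,Y)$ is Polish.} Since $\overline{\vol}_M$ is a finite Radon measure on a second countable space, it is a standard finite measure. Fix a countable dense set $\{y_k\}\subset Y$ and a countable algebra of Borel sets generating the Borel $\sigma$-algebra of $M$. Simple maps taking values $y_k$ on finitely many such sets form a countable dense family; density follows from approximating $\psi$ by maps into a finite $\epsilon$-net and then approximating preimages in measure. Completeness is the usual argument: a Cauchy sequence has a subsequence with $\sum\rho^M(\psi_n,\psi_{n+1})<\infty$, so it converges almost everywhere, and the limit is measurable because $Y$ is compact, hence complete. Alternatively, embed $Y$ isometrically into a Banach space, or use the convergence-in-measure metric on the standard space $L^0(M,Y)$.

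\emph{Step 2: continuity of the empirical map.} For $f\in C(Y)$, uniform continuity on the compact $Y$ gives $|\int f\circ\psi-\int f\circ\psi'|\le\omega_f(\epsilon)+2\|f\|_\infty\rho^M(\psi,\psi')/\epsilon$ by Markov's inequality. So $\psi\mapsto\int f\,d\mathsf E_M(\psi)$ is continuous for each $f$, which is exactly weak-star continuity.

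\emph{Step 3: Borel measurability of $\Delta$ (the main obstacle).} This needs care because $\psi$ is only an equivalence class composed with the partial translation $p\mapsto g.p$. First, $\psi\circ T_g$ is well defined on classes, since $T_g$ preserves $\vol_M$ by \cref{lem:local-measure-preserving} and so maps null sets to null sets. Next, for a fixed $g$, the map $\psi\mapsto\Delta(g,\psi)$ is continuous. The bound
\[
|\Delta(g,\psi)-\Delta(g,\psi')|\le \rho^M(\psi,\psi')+\int_{M[g]}\rho(g\psi,g\psi')\,d\overline{\vol}_M
\]
uses measure preservation for the first term, after pushing forward by $T_g$. The second term tends to zero by the uniform continuity of $y\mapsto gy$ together with the Markov argument of Step 2.

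For joint measurability in $g$, I would first show that $g\mapsto\Delta(g,\psi)$ is Borel for a fixed simple $\psi$. This holds because the integrand $(g,p)\mapsto\1_{\operatorname{dom}}(g,p)\,\rho(\psi(g.p),g\psi(p))$ is Borel on $G\times M$: the domain is open and the partial action is continuous there. Tonelli then gives Borelness in $g$. Combined with continuity in $\psi$, the map is Carathéodory, hence jointly Borel on $G\times L^1(M,Y)$, since $L^1(M,Y)$ is separable metric and $G$ is standard.

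\emph{Step 4: conclusion.} Since $\Delta$ is jointly Borel and bounded by $1$, Tonelli shows that the average over $L$ is Borel in $\psi$. Consequently $\Mapavg(M,Y,\rho:L,\delta,\cO)$ is Borel: it is the intersection of the preimage of $(-\infty,\delta)$ under this average with $\mathsf E_M^{-1}(\cO)$.
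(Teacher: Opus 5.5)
Your proposal is correct, and its ingredients match the paper's: measure preservation of the partial translations $T_g$ (used both for well-definedness on classes and for an $L^1$-continuity estimate in $\psi$), Borelness in $g$ on a countable dense family by parameterized integration, a Borel choice of approximants, and Tonelli over $L$.

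The difference is where the dense-family argument is carried out. The paper does it by hand for the $L^1(M,Y)$-valued partial-composition map $(g,\psi)\mapsto C_g\psi$, using the contraction $\rho^M(C_gs,C_g\psi)\le\rho^M(s,\psi)$. It then writes $\Delta_{\rho,M}(g,\psi)$ as a continuous expression in $C_g\psi$, $g\psi$ and $\1_{M[g]}$. That route needs joint continuity of $(g,\psi)\mapsto g\psi$ on $G\times L^1(M,Y)$ and Borelness of $g\mapsto\1_{M[g]}$ as an $L^1(M)$-valued map. In return it gives the stronger, reusable fact that the partial-composition operator is jointly Borel.

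You instead apply the argument to the scalar defect, packaged as the Carath\'eodory joint-measurability theorem. You need only two things. The first is continuity in $\psi$ for each fixed $g$: measure preservation handles the $\psi(g.p)$ term, and uniform continuity of the single homeomorphism $y\mapsto gy$ handles the other. The second is Borelness in $g$ for each fixed representative, which comes directly from Tonelli on $G\times M$. So your route is somewhat leaner.

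Your Tonelli argument in $g$ works for every Borel representative, not only simple ones, so the Carath\'eodory hypotheses hold as stated; in any case its proof only uses a dense set. As in the paper, the final Borelness of $\Mapavg$ tacitly takes $\cO$ open (or at least Borel). This is harmless, since every neighborhood contains an open one and the model spaces are monotone in $\cO$.
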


\begin{proof}
	The Polishness of $L^1(M,Y)$ is standard for a standard probability space
	and a complete separable bounded target. If $\psi_n\to\psi$ in
	$\rho^M$, then $\psi_n\to\psi$ in measure. Uniform continuity of every
	$f\in C(Y)$, followed by bounded convergence in measure, gives
	\[
	\int_M f(\psi_n(p))\,d\overline{\vol}_M(p)
	\longrightarrow
	\int_M f(\psi(p))\,d\overline{\vol}_M(p).
	\]
	So $\mathsf E_M$ is continuous for the weak-star topology on
	$\Prob(Y)$.
	
	Fix $y_*\in Y$. For $g\in G$ define the partial-composition operator
	$C_g:L^1(M,Y)\to L^1(M,Y)$ by
	\[
	(C_g\psi)(p):=
	\begin{cases}
		\psi(g.p),&p\in M[g],\\
		y_*,&p\notin M[g].
	\end{cases}
	\]
	This is well defined on almost-everywhere classes because partial
	translations preserve the canonical measure and so carry null sets to null
	sets. We claim that $(g,\psi)\mapsto C_g\psi$ is Borel. Choose a countable
	dense family $(s_n)$ in $L^1(M,Y)$. For each fixed $n$, the map
	$g\mapsto C_gs_n$ is Borel: the domain of the partial action is Borel, the
	partial action is Borel, and the distance from $C_gs_n$ to any fixed member of
	a countable dense family is a parameterized integral of a bounded Borel
	function. For each $k$, choose Borelly an index $n_k(\psi)$ such that
	\[
	\rho^M(s_{n_k(\psi)},\psi)<2^{-k}.
	\]
	Local measure preservation gives the contraction estimate
	\[
	\rho^M(C_gs_{n_k(\psi)},C_g\psi)
	\leq \rho^M(s_{n_k(\psi)},\psi)<2^{-k}.
	\]
	So $C_gs_{n_k(\psi)}\to C_g\psi$ in $L^1$, proving the claim as a
	pointwise limit of Borel maps.
	
	The map $(g,\psi)\mapsto g\psi$, where $(g\psi)(p)=g(\psi(p))$, is
	continuous from $G\times L^1(M,Y)$ to $L^1(M,Y)$ by joint continuity of the
	action and compactness of $Y$. Also
	$g\mapsto\1_{M[g]}$ is Borel as an $L^1(M)$-valued map, again by
	parameterized integration. This gives
	\begin{align*}
		\Delta_{\rho,M}(g,\psi)
		&=\int_M\1_{M[g]}(p)
		\rho\bigl((C_g\psi)(p),(g\psi)(p)\bigr)
		\,d\overline{\vol}_M(p)\\
		&\qquad+\overline{\vol}_M(M\setminus M[g])
	\end{align*}
	is Borel in $(g,\psi)$, since integration of the pointwise metric is
	continuous in the $L^1$ variables. A final application of parameterized
	integration over $L$ gives the averaged defect, and the claim about
	$\Mapavg$ follows from continuity of $\mathsf E_M$.
\end{proof}

\begin{proposition}[Metric independence]\label{prop:compact-entropy-invariance}
	The quantities
	\[
	h^{\mathrm{meas}}_{\Sigma}(G,Y,\nu,\rho)
	\quad\text{and}\quad
	h^{\mathrm{meas,avg}}_{\Sigma}(G,Y,\nu,\rho)
	\]
	are independent of the compatible metric $\rho$ on the fixed compact model $Y$.
\end{proposition}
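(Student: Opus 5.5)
The plan is to compare two compatible metrics $\rho,\rho'\leq1$ on the compact space $Y$. The argument is symmetric in the two metrics, so it suffices to show $h_\Sigma(\rho)\leq h_\Sigma(\rho')$ for both the standard and averaged versions. Compactness will supply the uniform comparisons that make everything work. The identity map $(Y,\rho)\to(Y,\rho')$ is uniformly continuous, as is its inverse. So for every $\beta>0$ there is $\beta'>0$ with $\rho(x,y)<\beta'\Rightarrow\rho'(x,y)<\beta$, and vice versa. Let $\omega(t)$ denote a modulus of continuity of the identity $(Y,\rho)\to(Y,\rho')$, with $\omega(t)\to0$ as $t\to0$. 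Integrating over the normalized canonical measure gives the pointwise estimate
\[
\rho'(x,y)\leq\omega(\beta')+\1_{\{\rho(x,y)\geq\beta'\}}\leq\omega(\beta')+\rho(x,y)/\beta' .
\]
Applied to the $\rho^M$ and defect functionals, this yields
\[
\rho'^M(\psi,\psi')\leq\omega(\beta')+\rho^M(\psi,\psi')/\beta'.
\]
The same estimate holds for the metric part of $\Delta_{\rho',M}(g,\psi)$. The undefined-domain penalty $\overline{\vol}_M(M\setminus M[g])$ does not depend on the metric, so $\Delta_{\rho',M}(g,\psi)\leq\omega(\beta')+\Delta_{\rho,M}(g,\psi)/\beta'$.

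\textbf{Model spaces.} Given $L,\delta,\cO$ for $\rho'$, first choose $\beta'$ with $\omega(\beta')<\delta/2$, and then $\delta_1=\beta'\delta/2$. This gives
\[
\Map(M,Y,\rho:L,\delta_1,\cO)\subseteq\Map(M,Y,\rho':L,\delta,\cO),
\]
and the analogous inclusion for $\Mapavg$, since averaging over $L$ is linear. The empirical condition $\cO$ is a weak-star neighborhood, and the weak-star topology does not depend on the metric, so the parameter $\cO$ is left unchanged. Taking infima over parameters then shows that the $\rho$-model spaces eventually sit inside the $\rho'$-model spaces.

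\textbf{Separation scale.} This is the main point, because $\Sep$ is monotone in the set but the two metrics must still be compared at some scale. By the integrated estimate above, a $(\rho'^M,\eps)$-separated set is $(\rho^M,\eps_1)$-separated once $\omega(\beta')<\eps/2$ and $\eps_1=\beta'\eps/2$. Here the roles of the metrics are swapped, which is legitimate because the estimate is symmetric. Combining this with the inclusion of model spaces gives
\[
\Sep_\eps\bigl(\Map(\rho':L,\delta,\cO),\rho'^M\bigr)\leq\Sep_{\eps_1}\bigl(\Map(\rho':L,\delta,\cO),\rho^M\bigr).
\]
However, this bound still refers to $\rho'$-model spaces, while the right-hand side needs $\rho$-model spaces; the inclusion above runs the other way. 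I therefore plan to use the reverse inclusion. Given $\rho$-parameters $(L,\delta,\cO)$, choose $\rho'$-parameters $(L,\delta',\cO)$ with $\Map(\rho':\delta')\subseteq\Map(\rho:\delta)$. Then, for every $\eps$ with $\eps_1=\eps_1(\eps)$,
\[
\Sep_\eps\bigl(\Map(\rho':\delta'),\rho'^M\bigr)\leq\Sep_{\eps_1}\bigl(\Map(\rho:\delta),\rho^M\bigr).
\]
Taking $\limsup_i\frac1{V_i}\log$ and then infima over $\rho'$-parameters on the left and $\rho$-parameters on the right gives $h_{\Sigma,\eps}(\rho')\leq h_{\Sigma,\eps_1}(\rho)$. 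Here $\eps_1\to0$ as $\eps\to0$. Both sides are monotone in the scale, so letting $\eps\downarrow0$ yields $h_\Sigma(\rho')\leq h_\Sigma(\rho)$. Symmetry then gives equality.

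\textbf{Expected obstacle.} The main care is bookkeeping the order of the infima, since the inclusion is needed "for every parameter on one side there is one on the other side." Conventions for empty sets need no special handling, because the inclusions are honest. The averaged case runs identically, using linearity of the defect bound under $\frac1{\Haar(L)}\int_L$.
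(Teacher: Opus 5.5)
Your argument is correct and is essentially the paper's proof. Your indicator bound $\rho'\le\omega(\beta')+\rho/\beta'$ is the paper's Markov step, your defect estimate is its inequality $\Delta_{\rho',M}\le a+s^{-1}\Delta_{\rho,M}$, and the bookkeeping (``for every parameter on one side there is one on the other'', plus monotonicity of $h_{\Sigma,\eps}$ in $\eps$) is the same. The differences are minor: you prove the inequality in the opposite direction, and you treat $\Map$ directly rather than reducing to $\Mapavg$ via \cref{prop:avg-standard}. You should also take $\beta'\le1$, so that the undefined-domain penalty is absorbed into $\Delta_{\rho,M}/\beta'$.
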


\begin{proof}
	It is enough, by \cref{prop:avg-standard}, to prove the claim for the averaged entropy. Let $\rho$ and $\rho'$ be compatible metrics bounded by $1$. Fix a separation scale $r\in(0,1)$. By uniform equivalence on the compact space $Y$, choose $u\in(0,1)$ such that
	\begin{equation}\label{eq:separation-modulus}
		\rho'(y,z)<u\Longrightarrow \rho(y,z)<r/2.
	\end{equation}
	Put $t:=ur/4$. If $(\rho')^M(\psi,\psi')\leq t$, then Markov's
inequality shows that the set where $\rho'\geq u$ has normalized volume
at most $r/4$. On its complement, \eqref{eq:separation-modulus} gives
$\rho<r/2$. Since $\rho\leq1$,
\[
\rho^M(\psi,\psi')<\frac r2+\frac r4=\frac{3r}{4}<r.
\]
By contraposition, $\rho^M(\psi,\psi')>r$ implies
$(\rho')^M(\psi,\psi')>t$. So every
$(\rho^M,r)$-separated family is $(\rho'^M,t)$-separated, with the
strict inequality required by the definition of separation.
	
	Now fix an averaged-equivariance tolerance $\delta'>0$. Choose $a\in(0,\delta'/2)$ and then choose $s\in(0,1)$ such that
	\begin{equation}\label{eq:equivariance-modulus}
		\rho(y,z)<s\Longrightarrow \rho'(y,z)<a.
	\end{equation}
	Splitting the defined part of the domain into $\{\rho<s\}$ and its complement, and using $s\leq1$ for the undefined-domain penalty, gives
	\begin{equation}\label{eq:defect-comparison}
		\Delta_{\rho',M}(g,\psi)
		\leq a+s^{-1}\Delta_{\rho,M}(g,\psi).
	\end{equation}
	Set $\delta:=s(\delta'-a)>0$. Then
	\begin{equation}\label{eq:map-inclusion}
		\Mapavg(M,Y,\rho:L,\delta,\cO)
		\subseteq
		\Mapavg(M,Y,\rho':L,\delta',\cO).
	\end{equation}
	Combining separation with \eqref{eq:map-inclusion}, for every $L$, $\cO$, and $\delta'>0$ we get
	\begin{align*}
		&\limsup_i\frac1{V_i}
		\log\Sep_t\bigl(\Mapavg(M_i,Y,\rho':L,\delta',\cO),\rho'^{M_i}\bigr)\\
		&\qquad\geq
		\limsup_i\frac1{V_i}
		\log\Sep_r\bigl(\Mapavg(M_i,Y,\rho:L,\delta,\cO),\rho^{M_i}\bigr).
	\end{align*}
	Taking the infima over $\delta'$, $L$, and $\cO$, and then letting $r\downarrow0$, gives
	\[
	h^{\mathrm{meas,avg}}_{\Sigma}(G,Y,\nu,\rho')
	\geq h^{\mathrm{meas,avg}}_{\Sigma}(G,Y,\nu,\rho).
	\]
	Interchanging $\rho$ and $\rho'$ proves equality, and \cref{prop:avg-standard} transfers it to the pointwise measure sofic entropy.
\end{proof}

\begin{proposition}[Topological-conjugacy invariance]\label{prop:top-conj-invariance}
	Let $G\curvearrowright(Y,\nu)$ and $G\curvearrowright(Y',\nu')$ be compact metrizable continuous pmp systems. Suppose that
	\[
	\Phi:Y\longrightarrow Y'
	\]
	is a $G$-equivariant homeomorphism with $\Phi_*\nu=\nu'$. Then
	\[
	h^{\mathrm{meas}}_\Sigma(G,Y,\nu)=h^{\mathrm{meas}}_\Sigma(G,Y',\nu')
	\]
	and
	\[
	h^{\mathrm{meas,avg}}_\Sigma(G,Y,\nu)
	=h^{\mathrm{meas,avg}}_\Sigma(G,Y',\nu').
	\]
	So the spatial formula is invariant under measure-preserving topological $G$-conjugacy.
\end{proposition}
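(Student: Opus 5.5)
The plan is to transport microstates along $\Phi$ by composition and compare the two model spaces directly, exactly as in the proof of \cref{prop:compact-entropy-invariance}. By that proposition the entropies do not depend on the compatible metric, so I may equip $Y'$ with the pushed-forward metric $\rho':=\rho\circ(\Phi^{-1}\times\Phi^{-1})$. This is a compatible metric bounded by $1$ because $\Phi$ is a homeomorphism. With this choice the map $\psi\mapsto\Phi\circ\psi$ from measurable maps $M\to Y$ to measurable maps $M\to Y'$ is a bijection, with inverse $\psi'\mapsto\Phi^{-1}\circ\psi'$, and it is an isometry for the integrated pseudometrics:
\[
(\rho')^M(\Phi\circ\psi,\Phi\circ\varphi)=\rho^M(\psi,\varphi).
\]
Hence it maps $(\rho^M,\eps)$-separated sets bijectively onto $((\rho')^M,\eps)$-separated sets.

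Next I check that the defects match. Equivariance gives $g\Phi(\psi(p))=\Phi(g\psi(p))$, so on $M[g]$
\[
\rho'\bigl(\Phi\psi(g.p),g\Phi\psi(p)\bigr)=\rho\bigl(\psi(g.p),g\psi(p)\bigr).
\]
The undefined-domain penalty $\dbar_M(M\setminus M[g])$ involves only $M$ and is identical on both sides. Therefore
\[
\Delta_{\rho',M}(g,\Phi\circ\psi)=\Delta_{\rho,M}(g,\psi)
\]
for every $g$, and likewise for the $L$-averages.

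For the empirical condition, $(\Phi\circ\psi)_*\dbar_M=\Phi_*(\psi_*\dbar_M)$. Since $\Phi_*:\Prob(Y)\to\Prob(Y')$ is a weak-star homeomorphism sending $\nu$ to $\nu'$, the neighborhoods $\cO$ of $\nu$ correspond bijectively to the neighborhoods $\Phi_*\cO$ of $\nu'$. Combining the three observations, composition with $\Phi$ maps $\Map(M,Y,\rho:L,\delta,\cO)$ bijectively and isometrically onto $\Map(M,Y',\rho':L,\delta,\Phi_*\cO)$, and the same holds for $\Mapavg$.

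Consequently $\Sep_\eps$ agrees term by term for every $i$, $L$, $\delta$ and $\cO$. The infima over neighborhoods range over corresponding families, so $h^{\mathrm{meas}}_{\Sigma,\eps}$ and $h^{\mathrm{meas,avg}}_{\Sigma,\eps}$ agree for each $\eps$, and passing to the limit gives both equalities. Finally, metric independence (\cref{prop:compact-entropy-invariance}) removes the special choice of $\rho'$.

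I expect no serious obstacle. The only points needing care are that $\rho'$ is genuinely compatible, which uses continuity of $\Phi^{-1}$, and that $\Phi_*$ is a homeomorphism of the weak-star topologies so that the infimum over $\cO$ is preserved. Measurability of $\Phi\circ\psi$ is automatic because $\Phi$ is Borel.
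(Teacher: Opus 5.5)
Your proposal is correct and follows essentially the same route as the paper: transport microstates by composition with $\Phi$, check that separation, defects, and the empirical condition correspond exactly (using equivariance and the weak-star homeomorphism $\Phi_*$), and then invoke \cref{prop:compact-entropy-invariance} to remove the special metric. The only difference is cosmetic: you push a metric on $Y$ forward to $Y'$, whereas the paper pulls a metric on $Y'$ back to $Y$.
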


\begin{proof}
	Let $\rho'$ be a compatible metric on $Y'$ bounded by $1$, and pull it back to the compatible metric
	\[
	\widetilde\rho(y,z):=\rho'(\Phi(y),\Phi(z))
	\qquad(y,z\in Y)
	\]
	on $Y$. For every finite-volume local $G$-space $M$, composition with $\Phi$ gives a bijection
	\[
	T_M:Y^M\longrightarrow (Y')^M,
	\qquad T_M(\psi):=\Phi\circ\psi.
	\]
	Because $\Phi$ is $G$-equivariant, for every $g\in G$ and every measurable $\psi:M\to Y$ we have
	\begin{align*}
		\Delta_{\rho',M}(g,T_M\psi)
		&=\int_{M[g]}\rho'\bigl(\Phi(\psi(g.p)),g\Phi(\psi(p))\bigr)
		\,d\overline{\vol}_M(p)
		+\overline{\vol}_M(M\setminus M[g])\\
		&=\int_{M[g]}\widetilde\rho\bigl(\psi(g.p),g\psi(p)\bigr)
		\,d\overline{\vol}_M(p)
		+\overline{\vol}_M(M\setminus M[g])\\
		&=\Delta_{\widetilde\rho,M}(g,\psi).
	\end{align*}
	Also,
	\[
	(T_M\psi)_*\overline{\vol}_M
	=\Phi_*\bigl(\psi_*\overline{\vol}_M\bigr),
	\]
	and for all $\psi,\psi':M\to Y$,
	\[
	(\rho')^M(T_M\psi,T_M\psi')
	=\widetilde\rho^M(\psi,\psi').
	\]
	
	The pushforward map
	\[
	\Phi_*:\operatorname{Prob}(Y)\longrightarrow\operatorname{Prob}(Y')
	\]
	is a homeomorphism for the weak-star topologies, with inverse $(\Phi^{-1})_*$. So if $\cO'$ is a weak-star neighborhood of $\nu'$ and
	\[
	\cO:=\Phi_*^{-1}(\cO'),
	\]
	then $\cO$ is a weak-star neighborhood of $\nu$, and for every precompact open identity neighborhood $L\subset G$ and every $\delta>0$,
	\begin{align*}
		T_M\bigl(\Map(M,Y,\widetilde\rho:L,\delta,\cO)\bigr)
		&=\Map(M,Y',\rho':L,\delta,\cO'),\\
		T_M\bigl(\Mapavg(M,Y,\widetilde\rho:L,\delta,\cO)\bigr)
		&=\Mapavg(M,Y',\rho':L,\delta,\cO').
	\end{align*}
	These bijections are isometries for the normalized integral metrics. So for every $\eps>0$, every $i$, and the corresponding neighborhoods $\cO,\cO'$,
	\begin{align*}
		\Sep_\eps\bigl(\Map(M_i,Y,\widetilde\rho:L,\delta,\cO),
		\widetilde\rho^{M_i}\bigr)=
		\Sep_\eps\bigl(\Map(M_i,Y',\rho':L,\delta,\cO'),
		(\rho')^{M_i}\bigr),
	\end{align*}
	and the same equality holds for the averaged model spaces. Since $\Phi_*$ bijects the weak-star neighborhood filters of $\nu$ and $\nu'$, taking the infima in the definitions gives
	\[
	h^{\mathrm{meas}}_{\Sigma,\eps}(G,Y,\nu,\widetilde\rho)
	=h^{\mathrm{meas}}_{\Sigma,\eps}(G,Y',\nu',\rho')
	\]
	for every $\eps>0$, and also for $h^{\mathrm{meas,avg}}_{\Sigma,\eps}$. Letting $\eps\downarrow0$ gives equality of the corresponding entropies for the metrics $\widetilde\rho$ and $\rho'$. Finally, metric independence from \cref{prop:compact-entropy-invariance} allows $\widetilde\rho$ and $\rho'$ to be replaced by arbitrary compatible metrics on $Y$ and $Y'$, respectively.
\end{proof}
	\begin{remark}\label{rem:top-vs-measure-conjugacy}

		The previous proposition compares compact models only when they are
		topologically conjugate. Two compact models of the same standard pmp action
		need only be measure conjugate. The required compact-model independence is
		proved below by identifying the local formula with Singh's
		measure-conjugacy invariant entropy.

\end{remark}

\subsection{Comparison with Singh’s pseudometric entropy}
In this subsection we verify, in the common compact-model setting, that the
spatial measure entropy of \cref{def:spatial-entropy} agrees with the
pseudometric microstate entropy introduced by Singh in
\cite[Definitions~9--11]{SinghThesis}. The two formulations differ in three ways. First, the present paper uses local $G$-spaces, so the translations are only
partially defined. Singh uses globally defined measurable maps. Second, the
present paper uses an $L^1$ model pseudometric, while Singh
uses the corresponding $L^2$ pseudometric. Third, the empirical-distribution
condition is expressed here by a weak-star neighborhood and in Singh's definition
by finitely many continuous test functions. We show that none of these differences affects the entropy.

Let $G\curvearrowright (X,\mu)$ be a jointly continuous
pmp action on a compact metrizable space  and $\Sigma=(M_i)_{i\geq 1}$ be a sofic approximation
in the sense of Definition \ref{def:sofic-approximation}, and let
$\rho:X\times X\to[0,1]$ be a continuous generating pseudometric.

	Locally compact sofic groups are unimodular. So the right Haar measure
used to define the canonical measures on the local $G$-spaces is also a left
Haar measure and may be used in Singh's Definition~4 without changing the
normalization. The completion constructed below depends only on the original
approximation $\Sigma$, not on the action, factor, or compact model.

\subsubsection{From local \texorpdfstring{$G$}{G}-spaces to Singh approximations}

Recall that for $g\in G$,
\[
M[g]:=\{p\in M:(g,p)\text{ lies in the domain of the partial action}\}.
\]
Choose a Borel completion of the partial action by setting
\begin{equation}\label{eq:completion}
	\widetilde L_i(g,p):=
	\begin{cases}
		g.p, & p\in M[g],\\
		p,   & p\notin M[g].
	\end{cases}
\end{equation}
The map $\widetilde L_i:G\times M_i\to M_i$ is Borel because the domain of the
partial action is open and the partial action is continuous there.

\begin{lemma}\label{lem:completion-is-singh}
	The sequence
$
	\widetilde\Sigma
	:=(M_i,\vol_i,\widetilde L_i)_{i\geq 1}
$
	is a sofic approximation in the sense of Singh.
\end{lemma}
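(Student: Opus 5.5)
The plan is to unwind Singh's Definition~4 of a sofic approximation into its separate requirements and check each of them for the completed maps $\widetilde L_i$. As I understand it, the definition is a sequence of finite-measure spaces with Borel maps $G\times M_i\to M_i$ satisfying four conditions, each asymptotically in $i$ and uniformly for group elements in a fixed compact set:
\begin{enumerate}
\item the identity acts trivially;
\item the maps are approximately multiplicative;
\item each map $\widetilde L_i(g,\cdot)$ is approximately measure preserving;
\item the maps are approximately free away from the identity.
\end{enumerate}
If Singh states some condition in Haar-averaged form over compacts, the uniform estimates below imply it. The one principle used throughout is this: on the good set $M_i[U_i]$, all partial-action identities hold exactly once the relevant compact set lies in $U_i$, and $\dbar_i(M_i\setminus M_i[U_i])\le\eps_i\to0$.

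\textbf{Borelness and the identity.} The domain of $\vartheta_i$ is open and $\vartheta_i$ is continuous there, so \eqref{eq:completion} is a Borel gluing of two Borel maps along a Borel set. Since $(e,p)$ always lies in the domain and $e.p=p$, we get $\widetilde L_i(e,\cdot)=\mathrm{id}$ exactly.

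\textbf{Multiplicativity.} Fix a compact $K\subset G$ and take $i$ so large that $K\cup K^2\subset U_i$. For $p\in M_i[U_i]$ and $g,h\in K$, condition (1) of \cref{def:local-sofic-model} gives that $h.p$, $g.(h.p)$ and $(gh).p$ are all defined, with $g.(h.p)=(gh).p$. Hence no completion case occurs, and
\[
\widetilde L_i(gh,p)=\widetilde L_i(g,\widetilde L_i(h,p)).
\]
The exceptional set therefore has normalized measure at most $\eps_i$, uniformly in $g,h\in K$.

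\textbf{Measure preservation.} For each $g$, the map $\widetilde L_i(g,\cdot)$ equals the partial translation $T_g:M_i[g]\to M_i[g^{-1}]$ on $M_i[g]$ and the identity on $M_i\setminus M_i[g]$. By \cref{lem:local-measure-preserving}, $(T_g)_*(\vol|_{M_i[g]})=\vol|_{M_i[g^{-1}]}$. Comparing with $\vol$, the pushforward $\widetilde L_i(g,\cdot)_*\vol_i$ differs from $\vol_i$ in total variation by at most
\[
\vol(M_i\setminus M_i[g])+\vol(M_i\setminus M_i[g^{-1}]).
\]
For $g\in K\subset U_i$ with $K$ symmetric, we have $M_i[U_i]\subset M_i[g]\cap M_i[g^{-1}]$, so this discrepancy is at most $2\eps_i V_i$, which is $o(V_i)$ uniformly over $K$. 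This gives approximate invariance in whichever normalized form Singh uses, whether by pushforward or by integrals of bounded functions.

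\textbf{Freeness.} For $p\in M_i[U_i]$, the orbit map is injective on $U_i$. Therefore $g.p\ne p$ for every $g\in U_i\setminus\{e\}$. If Singh phrases freeness through separation, I would use continuity of the chart to get a quantitative version: for a compact $K$ and a neighborhood $O$ of $e$, $g.p\notin O'.p$ for $g\in K\setminus O$, where $O'$ is a suitable smaller neighborhood. Again this holds off a set of normalized measure at most $\eps_i$. For the normalization, the same (unimodular) Haar measure defines $\vol_i$, and \cref{lem:volume-divergence} is not needed here.

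I expect the main obstacle to be matching Singh's exact quantifiers. One point is whether his freeness condition is metric, say $d(\widetilde L_i(g,p),p)$ bounded below in some sense, or phrased via separated orbit pieces. In that case I would first try to reformulate it through the chart homeomorphism $\theta_{i,p}$: images of disjoint compact subsets of $U_i$ are disjoint at good roots. The other point is whether the conditions are required pointwise in $g$ or averaged. Both follow from the uniform-on-compacts estimates above.
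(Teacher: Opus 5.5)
Your identity and multiplicativity checks are fine, but the list of axioms you verify is not Singh's Definition~4 as the paper uses it, and the axiom carrying the measure-theoretic content is never checked. Singh's definition is local and pointwise at good roots, not a discrete-style list of approximate invariance and approximate freeness. For each precompact open identity neighbourhood $U$, on a set of $p$ of normalized $\vol_i$-measure tending to $1$, the orbit map $g\mapsto\widetilde L_i(g,p)$ restricted to $U$ must:
(1) be a measure-space isomorphism of $(U,\Haar|_U)$ onto its image in $(M_i,\vol_i)$;
(2) send $e$ to $p$;
(3) satisfy $\widetilde L_i(g,\widetilde L_i(h,p))=\widetilde L_i(gh,p)$ for $g,h,gh\in U$.

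Your paragraphs give (2), (3), and the injectivity part of (1) on the good set. The measure part of (1), namely $\vol_i(\{g.p:g\in A\})=\Haar(A)$ for Borel $A\subset U$, appears nowhere. Your total-variation bound on $\widetilde L_i(g,\cdot)_*\vol_i$ is about translating the whole space by a fixed $g$, not about how the orbit map at a fixed root transports Haar measure. It cannot imply (1): every condition you check survives replacing $\vol_i$ by $2\vol_i$, while (1) does not.

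The fix is one line, and it is the paper's entire argument. For $p\in M_i[U]$, the map $g\mapsto g.p$ is a homeomorphism of $U$ onto the open neighbourhood $U.p$ of $p$. Its inverse is therefore a chart centred at $p$ with range $U$, in the sense of \cref{def:chart}. The defining property of the canonical measure then gives $\vol_i(A.p)=\Haar(A)$ for Borel $A\subset U$, and unimodularity lets this same Haar measure serve as Singh's normalization. Combine this with $\overline{\vol}_i(M_i\setminus M_i[U])\to0$, which holds because $U\subset U_i$ eventually and $M_i[U_i]\subset M_i[U]$. That verifies Singh's axioms, and your measure-preservation and freeness paragraphs become unnecessary.
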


\begin{proof}
	Fix a precompact open neighborhood $U$ of the identity. Let $M_i[U]$ be as in
	\cref{def:local-sofic-model} and put
	\[
	\beta_i(U):=\overline\vol_i(M_i\setminus M_i[U]).
	\]
	The definition of a sofic approximation implies
	\begin{equation}\label{eq:beta-to-zero}
		\beta_i(U)\longrightarrow 0.
	\end{equation}
	In fact, for all sufficiently large $i$, one has $U\subset U_i$ and so
	$M_i[U_i]\subset M_i[U]$.
	
	If $p\in M_i[U]$, then $g\mapsto g.p$ is a homeomorphism from $U$ onto an open
	neighborhood of $p$, and
	\[
	g.h.p=gh.p
	\qquad(g,h,gh\in U).
	\]
	Also, the defining property of the canonical measure gives
	\[
	\vol_i(\{g.p:g\in A\})=\Haar(A)
	\]
	for every Borel set $A\subset U$. So the restriction of
	$g\mapsto\widetilde L_i(g,p)=g.p$ to $U$ is a measure-space isomorphism onto its
	image, it sends the identity to $p$, and it satisfies the required local
	multiplication law. These are exactly conditions~(1)--(3) of
	\cite[Definition~4]{SinghThesis}. Since $M_i[U]$ has normalized measure tending to one by
	\eqref{eq:beta-to-zero}, the sequence $\widetilde\Sigma$ satisfies Singh's
	sofic-approximation axioms.
\end{proof}

\subsubsection{The two microstate spaces}

For measurable maps $\phi,\psi:M_i\to X$, define
\begin{align*}
	\rho_{1,i}(\phi,\psi)
	&:=\int_{M_i}\rho(\phi(p),\psi(p))\,d\overline\vol_i(p),\\
	\rho_{2,i}(\phi,\psi)
	&:=\left(\int_{M_i}\rho(\phi(p),\psi(p))^2
	\,d\overline\vol_i(p)\right)^{1/2}.
\end{align*}
For $g\in G$, define the Bowen equivariance error
\begin{equation}\label{eq:bowen-error}
E^B_i(\phi,g)
	:=\overline\vol_i(M_i\setminus M_i[g])
	+\int_{M_i[g]}
	\rho\bigl(\phi(g.p),g\phi(p)\bigr)\,d\overline\vol_i(p),
\end{equation}
and the completed Singh equivariance error
\begin{equation}\label{eq:singh-error}
E^S_i(\phi,g)
	:=\left(\int_{M_i}
	\rho\bigl(\phi(\widetilde L_i(g,p)),g\phi(p)\bigr)^2
	\,d\overline\vol_i(p)\right)^{1/2}.
\end{equation}
So $E^B_i(\phi,g)$ is exactly the defect appearing in \eqref{eq:defect}.

If $\mathcal F\subset C(X)$ is finite and $\eta>0$, let
\begin{equation}\label{eq:weak-star-basic}
	\cO(\mathcal F,\eta)
	:=\left\{\nu\in\Prob(X):
	\left|\int f\,d\nu-\int f\,d\mu\right|<\eta
	\text{ for every }f\in\mathcal F\right\}.
\end{equation}
These sets form a neighborhood basis of $\mu$ for the weak-star topology on
$\Prob(X)$.

For a precompact open neighborhood $U\subset G$ and $\delta>0$, write
\begin{align*}
	\Map_B(i;U,\delta,\mathcal F,\eta)
	:=\{\phi:M_i\to X:\;&E^B_i(\phi,g)<\delta\text{ for all }g\in U,\\
	&\phi_*\overline\vol_i\in\cO(\mathcal F,\eta)\},
\end{align*}
and
\begin{align*}
	\Map_S(i;U,\delta,\mathcal F,\eta)
	:=\{\phi:M_i\to X:\;&E^S_i(\phi,g)<\delta\text{ for all }g\in U,\\
	&\phi_*\overline\vol_i\in\cO(\mathcal F,\eta)\}.
\end{align*}
The first is the pointwise microstate space from Section~2, restricted to the basic
weak-star neighborhood \eqref{eq:weak-star-basic}. The second is Singh's spatial
microstate space, except that we temporarily allow separate tolerances for
equivariance and empirical distribution. This does not alter Singh's entropy:
the diagonal family formed by setting both tolerances equal is cofinal, because
for any $\delta,\eta>0$ one may replace them by $\min\{\delta,\eta\}$.

\begin{lemma}\label{lem:error-comparison}
	For every precompact open $U\subset G$, every $g\in U$, every $i$, and every
	measurable $\phi:M_i\to X$,
	\begin{equation}\label{eq:error-comparison}
		E^S_i(\phi,g)^2\le E^B_i(\phi,g)
		\quad\text{and}\quad
		E^B_i(\phi,g)\le E^S_i(\phi,g)+\beta_i(U).
	\end{equation}
	These inequalities give, for every $0<\delta<1$,
	\begin{equation}\label{eq:map-inclusions}
		\Map_B(i;U,\delta^2,\mathcal F,\eta)
		\subseteq
		\Map_S(i;U,\delta,\mathcal F,\eta),
	\end{equation}
	and, for all sufficiently large $i$,
	\begin{equation}\label{eq:map-inclusions-reverse}
		\Map_S(i;U,\delta/2,\mathcal F,\eta)
		\subseteq
		\Map_B(i;U,\delta,\mathcal F,\eta).
	\end{equation}
\end{lemma}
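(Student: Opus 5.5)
The plan is to compare the two integrands pointwise, splitting $M_i$ into the part $M_i[g]$ where the partial action is defined and its complement. Then I will use only $0\le\rho\le1$, the Cauchy--Schwarz inequality, and the inclusion $M_i[U]\subseteq M_i[g]$ for $g\in U$. The two map inclusions will follow directly, since both spaces use the same empirical condition $\phi_*\overline\vol_i\in\cO(\mathcal F,\eta)$.

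\emph{First inequality.} By \eqref{eq:completion}, $\widetilde L_i(g,p)=g.p$ for $p\in M_i[g]$. On $M_i[g]$ the Singh integrand is therefore $\rho(\phi(g.p),g\phi(p))^2$, and this is at most $\rho(\phi(g.p),g\phi(p))$ because $\rho\le1$. For $p\notin M_i[g]$ the completed action gives $\widetilde L_i(g,p)=p$, so the integrand is $\rho(\phi(p),g\phi(p))^2\le1$. This is bounded by the undefined-domain penalty, which contributes exactly $\overline\vol_i(M_i\setminus M_i[g])$ to $E^B_i$. Integrating these bounds gives $E^S_i(\phi,g)^2\le E^B_i(\phi,g)$.

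\emph{Second inequality.} I first check that $M_i[U]\subseteq M_i[g]$ when $g\in U$. Indeed, for $p\in M_i[U]$ the orbit map is defined on all of $U$ by \cref{def:local-sofic-model}, so $(g,p)$ lies in the domain of the partial action. Hence $\overline\vol_i(M_i\setminus M_i[g])\le\beta_i(U)$. For the integral term, the integrand on $M_i[g]$ coincides with the completed Singh integrand, and the latter is nonnegative on the rest of $M_i$. Cauchy--Schwarz with respect to the probability measure $\overline\vol_i$ then gives
\[
\int_{M_i[g]}\rho\bigl(\phi(g.p),g\phi(p)\bigr)\,d\overline\vol_i
\le\int_{M_i}\rho\bigl(\phi(\widetilde L_i(g,p)),g\phi(p)\bigr)\,d\overline\vol_i
\le E^S_i(\phi,g).
\]
Adding the two bounds gives $E^B_i\le E^S_i+\beta_i(U)$.

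\emph{Inclusions.} If $E^B_i(\phi,g)<\delta^2$ for all $g\in U$, the first inequality gives $E^S_i(\phi,g)<\delta$ for all $g\in U$. This proves \eqref{eq:map-inclusions}. For \eqref{eq:map-inclusions-reverse}, I use \eqref{eq:beta-to-zero} to choose $i$ so large that $\beta_i(U)<\delta/2$. Then $E^S_i<\delta/2$ gives $E^B_i<\delta/2+\delta/2=\delta$. The empirical condition is untouched in both directions, so the inclusions hold.

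The only point requiring care is the inclusion $M_i[U]\subseteq M_i[g]$, which lets the undefined-domain penalty be controlled uniformly in $g\in U$ by $\beta_i(U)$. It is immediate from condition (2) of \cref{def:local-sofic-model}. Everything else is pointwise bookkeeping.
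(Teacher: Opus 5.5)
Your proposal is correct and follows the paper's own proof essentially step for step. You split $M_i$ into $M_i[g]$ and its complement and use $\rho^2\le\rho$ together with the unit penalty for the first bound. For the second bound you use $M_i[U]\subseteq M_i[g]$ together with Cauchy--Schwarz, and you deduce the two inclusions in the same way the paper does. The only cosmetic difference is that you apply Cauchy--Schwarz to the full completed integrand on $M_i$, whereas the paper applies it to the restricted integrand on $M_i[g]$.
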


\begin{proof}
	Fix $g\in U$. Since $M_i[U]\subset M_i[g]$, one has
	\begin{equation}\label{eq:missing-mass}
		\overline\vol_i(M_i\setminus M_i[g])\le\beta_i(U).
	\end{equation}
	Put
	\[
	a(p):=\rho(\phi(g.p),g\phi(p))\quad(p\in M_i[g])
	\]
	and
	\[
	c(p):=\rho(\phi(\widetilde L_i(g,p)),g\phi(p))
	\quad(p\in M_i\setminus M_i[g]).
	\]
	Because $0\le a,c\le1$ and $\widetilde L_i(g,p)=g.p$ on $M_i[g]$,
	\begin{align*}
		E^S_i(\phi,g)^2
		&=\int_{M_i[g]} a(p)^2\,d\overline\vol_i(p)
		+\int_{M_i\setminus M_i[g]}c(p)^2\,d\overline\vol_i(p)\\
		&\le\int_{M_i[g]} a(p)\,d\overline\vol_i(p)
		+\overline\vol_i(M_i\setminus M_i[g])
		=E^B_i(\phi,g).
	\end{align*}
	But by Cauchy--Schwarz and \eqref{eq:missing-mass},
	\begin{align*}
		E^B_i(\phi,g)
		&\le\beta_i(U)+\int_{M_i[g]} a(p)\,d\overline\vol_i(p)\\
		&\le\beta_i(U)
		+\left(\int_{M_i[g]} a(p)^2\,d\overline\vol_i(p)\right)^{1/2}
		\le\beta_i(U)+E^S_i(\phi,g).
	\end{align*}
	This proves \eqref{eq:error-comparison}. The inclusion
	\eqref{eq:map-inclusions} follows directly. By
	\eqref{eq:beta-to-zero}, for all sufficiently large $i$ one has
	$\beta_i(U)<\delta/2$, and then \eqref{eq:map-inclusions-reverse} follows from
	the second inequality in \eqref{eq:error-comparison}.
\end{proof}

\begin{lemma}\label{lem:l1-l2-separation}
	For all measurable $\phi,\psi:M_i\to X$,
	\begin{equation}\label{eq:l1-l2-model}
		\rho_{1,i}(\phi,\psi)
		\le\rho_{2,i}(\phi,\psi)
		\le\rho_{1,i}(\phi,\psi)^{1/2}.
	\end{equation}
	These inequalities give, for every $A\subset\Map(M_i,X)$ and every $0<\eps<1$,
	\begin{equation}\label{eq:sep-comparison}
		\Sep_\eps(A,\rho_{1,i})
		\le\Sep_\eps(A,\rho_{2,i})
		\le\Sep_{\eps^2}(A,\rho_{1,i}).
	\end{equation}
\end{lemma}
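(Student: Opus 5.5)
The statement is Lemma~\ref{lem:l1-l2-separation}. I will prove the pointwise inequalities \eqref{eq:l1-l2-model} from two elementary facts about the probability measure $\overline\vol_i$, and then deduce the separation bounds \eqref{eq:sep-comparison} by comparing which sets are separated.

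For the first inequality in \eqref{eq:l1-l2-model}, put $u(p):=\rho(\phi(p),\psi(p))\in[0,1]$. Since $\overline\vol_i$ is a probability measure, Cauchy--Schwarz (equivalently Jensen's inequality) gives $\int u\,d\overline\vol_i\le(\int u^2\,d\overline\vol_i)^{1/2}$, which is $\rho_{1,i}\le\rho_{2,i}$. For the second inequality, $0\le u\le1$ gives $u^2\le u$ pointwise. Integrating gives $\rho_{2,i}^2\le\rho_{1,i}$, and taking square roots gives $\rho_{2,i}\le\rho_{1,i}^{1/2}$.

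For \eqref{eq:sep-comparison}, I will show that each separated family of one kind is a separated family of the other kind.

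\emph{Left inequality.} Let $E\subseteq A$ be $(\rho_{1,i},\eps)$-separated. For distinct $\phi,\psi\in E$ we have $\rho_{2,i}(\phi,\psi)\ge\rho_{1,i}(\phi,\psi)>\eps$. So $E$ is $(\rho_{2,i},\eps)$-separated. Taking the supremum over such $E$ gives the inequality.

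\emph{Right inequality.} Let $E\subseteq A$ be $(\rho_{2,i},\eps)$-separated. For distinct $\phi,\psi\in E$ we have $\eps<\rho_{2,i}(\phi,\psi)\le\rho_{1,i}(\phi,\psi)^{1/2}$. Squaring, which preserves the strict inequality since both sides are nonnegative, gives $\rho_{1,i}(\phi,\psi)>\eps^2$. So $E$ is $(\rho_{1,i},\eps^2)$-separated. Taking the supremum again gives the inequality.

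The conventions for $\Sep$ of the empty set, and for infinite cardinalities, are respected automatically: both arguments are injections between families of separated subsets of the same set $A$.

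No step is a real obstacle. The only thing to check is that the strict inequality in the definition of separation survives each comparison, and it does, because both maps $t\mapsto t$ and $t\mapsto t^2$ are strictly increasing on $[0,\infty)$.
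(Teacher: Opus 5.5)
Your proposal is correct and follows the paper's proof: you apply $\|f\|_1\le\|f\|_2$ on the probability space $(M_i,\overline\vol_i)$ and the pointwise bound $f^2\le f$ to $f=\rho(\phi,\psi)\in[0,1]$. You then read off the separation inequalities from the resulting metric comparisons, as the paper does. Your explicit check that the strict inequality in the definition of separation survives squaring is a detail the paper leaves implicit.
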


\begin{proof}
	Apply $\|f\|_1\le\|f\|_2$ and $f^2\le f$ to the function
	$f(p)=\rho(\phi(p),\psi(p))\in[0,1]$. The separation-number inequalities are
	immediate from \eqref{eq:l1-l2-model}.
\end{proof}

\subsubsection{Equality of the entropies}

For $0<\eps<1$, define
\begin{align*}
	b_\eps
	:=\inf_{\mathcal F}\inf_{\eta>0}\inf_U\inf_{\delta>0}
	\limsup_{i\to\infty}\frac{1}{\vol_i(M_i)}
	\log\Sep_\eps
	\bigl(\Map_B(i;U,\delta,\mathcal F,\eta),\rho_{1,i}\bigr),
\end{align*}
where $\mathcal F$ ranges over finite subsets of $C(X)$ and $U$ over precompact
open neighborhoods of the identity. Because the sets
$\cO(\mathcal F,\eta)$ form a weak-star neighborhood basis,
\begin{equation}\label{eq:bowen-b-epsilon}
	h^{\mathrm{meas}}_\Sigma(G,X,\mu,\rho)=\sup_{0<\eps<1} b_\eps.
\end{equation}
In the same way, let
\begin{align*}
	s_\eps
	:=\inf_{\mathcal F}\inf_{\eta>0}\inf_U\inf_{\delta>0}
	\limsup_{i\to\infty}\frac{1}{\vol_i(M_i)}
	\log\Sep_\eps
	\bigl(\Map_S(i;U,\delta,\mathcal F,\eta),\rho_{2,i}\bigr).
\end{align*}
By the cofinality observation above,
\[
\sup_{0<\eps<1}s_\eps
=h_{\widetilde\Sigma}^{\mathrm{Singh,pseudo}}(\mu,\rho),
\]
where the right-hand side is Singh's pseudometric entropy computed with the
completed approximation $\widetilde\Sigma$.

	\begin{theorem}[Singh--Bowen comparison]\label{thm:identification-bowen}
	Under the assumptions of this subsection,
	\begin{equation}\label{eq:main-equivalence}
		h^{\mathrm{meas}}_\Sigma(G,X,\mu,\rho)
		=h_{\widetilde\Sigma}^{\mathrm{Singh,pseudo}}(\mu,\rho).
	\end{equation}
	Here $\widetilde\Sigma$ is the Singh approximation built by completing
	the same local approximation $\Sigma$. The common value is independent of
	how the partial action is completed outside its domain, and the completed
	approximation depends only on $\Sigma$, not on the action or compact model.
\end{theorem}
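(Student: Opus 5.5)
The proof will compare $b_\eps$ and $s_\eps$ directly, using the inclusions of \cref{lem:error-comparison} for the microstate spaces and \cref{lem:l1-l2-separation} for the separation numbers. By \eqref{eq:bowen-b-epsilon} and the cofinality identity for $s_\eps$, it suffices to show that, for every $0<\eps<1$,
\[
 b_\eps\le s_\eps\le b_{\eps^2}.
\]
Taking suprema over $\eps$ then gives \eqref{eq:main-equivalence}, since $\eps\mapsto\eps^2$ maps $(0,1)$ onto itself.

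For the first inequality, fix $\mathcal F$, $\eta$, $U$ and $\delta\in(0,1)$. By \eqref{eq:map-inclusions-reverse}, for all large $i$,
\[
\Map_S(i;U,\delta/2,\mathcal F,\eta)\subseteq\Map_B(i;U,\delta,\mathcal F,\eta).
\]
By the first inequality in \eqref{eq:sep-comparison},
\[
\Sep_\eps\bigl(\Map_B(i;U,\delta,\mathcal F,\eta),\rho_{1,i}\bigr)\le\Sep_\eps\bigl(\Map_B(i;U,\delta,\mathcal F,\eta),\rho_{2,i}\bigr).
\]
I also need monotonicity of $\Sep_\eps$ under inclusion, which is immediate from the definition. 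Combining these, the Bowen quantity at parameter $\delta$ is at least the Singh quantity at parameter $\delta/2$. Infimizing over $\delta$ on both sides, and then over $U,\eta,\mathcal F$ (these are the same parameters on both sides), gives $s_\eps\le b_\eps$.

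The direction appears reversed, so I will recheck it. Smaller spaces have smaller $\Sep$, so $\Map_S\subseteq\Map_B$ gives Singh $\le$ Bowen at the level of sets. For the metrics, $\rho_1\le\rho_2$ gives $\Sep_\eps(\cdot,\rho_1)\le\Sep_\eps(\cdot,\rho_2)$. These two inequalities point in opposite directions, so this pairing is the wrong one. Instead I pair the set inclusion \eqref{eq:map-inclusions}, $\Map_B(\delta^2)\subseteq\Map_S(\delta)$, with the metric bound $\Sep_\eps(\rho_1)\le\Sep_\eps(\rho_2)$. This yields
\[
\Sep_\eps(\Map_B(\delta^2),\rho_{1,i})\le\Sep_\eps(\Map_S(\delta),\rho_{2,i}),
\]
hence $b_\eps\le s_\eps$. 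For the other direction I pair \eqref{eq:map-inclusions-reverse} with $\Sep_\eps(\rho_2)\le\Sep_{\eps^2}(\rho_1)$, which gives
\[
\Sep_\eps(\Map_S(\delta/2),\rho_{2,i})\le\Sep_{\eps^2}(\Map_B(\delta),\rho_{1,i}),
\]
hence $s_\eps\le b_{\eps^2}$. Reparametrizing $\delta\mapsto\delta^2$ and $\delta\mapsto\delta/2$ is harmless because the infimum over $\delta>0$ is unaffected. The normalizations $\vol_i(M_i)=V_i$ agree on both sides.

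The main obstacle is bookkeeping rather than analysis, and it has two parts. First, the pointwise inclusion \eqref{eq:map-inclusions-reverse} holds only for $i\ge i_0(U,\delta)$, which is harmless inside a $\limsup$. Second, the $\limsup$ and the infima interact monotonically: each inequality holds for fixed parameters, so it passes to the infima. For the final claims, independence of the completion follows because $E^B_i$ does not involve $\widetilde L_i$ at all, so any completion gives the same value; dependence only on $\Sigma$ is clear from \eqref{eq:completion}. Finally, I must confirm that $\widetilde\Sigma$ is a Singh approximation, which is \cref{lem:completion-is-singh}, and that Singh's entropy uses $\rho_2$ separation and test-function neighborhoods exactly as in $s_\eps$, which is the cofinality remark.
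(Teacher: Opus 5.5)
Once you discard your first, mis-paired attempt (which you rightly recognize yields no comparison), your argument is correct and is exactly the paper's proof: pairing \eqref{eq:map-inclusions} with $\Sep_\eps(\cdot,\rho_{1,i})\le\Sep_\eps(\cdot,\rho_{2,i})$, and \eqref{eq:map-inclusions-reverse} with $\Sep_\eps(\cdot,\rho_{2,i})\le\Sep_{\eps^2}(\cdot,\rho_{1,i})$, gives $b_\eps\le s_\eps\le b_{\eps^2}$, and taking suprema over $\eps\in(0,1)$ finishes. Your completion-independence argument is the same observation the paper makes via the bound $0\le c\le1$: the Bowen side never involves $\widetilde L_i$, and the comparison holds for every Borel completion.
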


\begin{proof}
	Fix $0<\eps<1$. By \eqref{eq:map-inclusions} and the first inequality in
	\eqref{eq:sep-comparison}, for every choice of
	$U,\mathcal F,\eta,$ and $0<\delta<1$,
	\begin{align*}
		\Sep_\eps
		\bigl(\Map_B(i;U,\delta^2,\mathcal F,\eta),\rho_{1,i}\bigr)\le
		\Sep_\eps
		\bigl(\Map_S(i;U,\delta,\mathcal F,\eta),\rho_{2,i}\bigr).
	\end{align*}
	Taking normalized logarithms, the limsup, and then the indicated infima gives
	\begin{equation}\label{eq:b-less-s}
		b_\eps\le s_\eps.
	\end{equation}
	Here replacing $\delta$ by $\delta^2$ does not affect the infimum, since
	$\{\delta^2:0<\delta<1\}$ is cofinal at zero.

	For the reverse direction, by \eqref{eq:map-inclusions-reverse}, for all sufficiently large $i$,
	\[
	\Map_S(i;U,\delta/2,\mathcal F,\eta)
	\subseteq\Map_B(i;U,\delta,\mathcal F,\eta).
	\]
	Using the second inequality in \eqref{eq:sep-comparison},
	\begin{align*}
		\Sep_\eps
		\bigl(\Map_S(i;U,\delta/2,\mathcal F,\eta),\rho_{2,i}\bigr)
		\le
		\Sep_{\eps^2}
		\bigl(\Map_B(i;U,\delta,\mathcal F,\eta),\rho_{1,i}\bigr)
	\end{align*}
	for all sufficiently large $i$. This gives
	\begin{equation}\label{eq:s-less-b}
		s_\eps\le b_{\eps^2}.
	\end{equation}
	Combining \eqref{eq:b-less-s} and \eqref{eq:s-less-b},
	\[
	b_\eps\le s_\eps\le b_{\eps^2}.
	\]
	Taking the supremum over $0<\eps<1$ gives
	\[
	\sup_{0<\eps<1}b_\eps
	=\sup_{0<\eps<1}s_\eps,
	\]
	because $\eps\mapsto\eps^2$ maps $(0,1)$ onto $(0,1)$. This proves
	\eqref{eq:main-equivalence}. The argument used no property of the completion
	outside the estimate $0\le c\le1$; so every Borel completion gives the same
	entropy.
\end{proof}

	\begin{corollary}[Identification with Singh's measure entropy]
	\label{cor:singh-measure-identification}
	Assume in addition that $G$ is nondiscrete. In the notation of Singh's
	thesis, for every dynamically generating sequence $P$, every countable set
	$\Gamma\subset G$ which generates $P$ in Singh's sense, and every
	constant $C\geq1$,
	\begin{align*}
		h^{\mathrm{meas}}_\Sigma(G,X,\mu,\rho)
		&=h_{\widetilde\Sigma}^{\mathrm{Singh,pseudo}}(\mu,\rho)\\
		&=h_{\widetilde\Sigma}^{\mathrm{meas,Singh}}
		\bigl(G\curvearrowright(X,\mu)\bigr)\\
		&=h_{\operatorname{Hom}_C}(P,\Gamma)
		=h_{\operatorname{UP}_C}(P,\Gamma).
	\end{align*}
	So the local spatial entropy is invariant under measure
	conjugacy and is independent of the compact metrizable continuous model and
	compatible metric used to represent the underlying standard pmp action.
\end{corollary}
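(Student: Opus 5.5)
The chain of equalities in \cref{cor:singh-measure-identification} will be assembled from three pieces, only one of which needs new work. The first equality is \cref{thm:identification-bowen}, applied to the completed approximation $\widetilde\Sigma$ of \cref{lem:completion-is-singh}; it needs no nondiscreteness. For the remaining equalities I will invoke Singh's thesis directly, with the appendix serving only to check that its hypotheses are met. Nondiscreteness is exactly the hypothesis under which Singh's Theorem~3.4.2 and Proposition~3.4.3 identify the pseudometric entropy of a compact model with his measure entropy $h^{\mathrm{meas,Singh}}_{\widetilde\Sigma}$. Definition~11 of the thesis, together with his equivalence theorem, identifies that measure entropy with $h_{\operatorname{Hom}_C}(P,\Gamma)$ and $h_{\operatorname{UP}_C}(P,\Gamma)$ for every dynamically generating sequence $P$, generating countable $\Gamma$, and $C\ge1$.

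So the concrete steps, in order, are these.
\begin{enumerate}
\item Verify that $\widetilde\Sigma$ is a Singh sofic approximation. This is \cref{lem:completion-is-singh}, together with the observation that unimodularity lets the right Haar measure normalize Singh's Definition~4.
\item Check that the compact model $G\curvearrowright(X,\mu)$ with the continuous generating pseudometric $\rho$ is an admissible input for Singh's pseudometric formula. A compatible metric bounded by $1$ is in particular a continuous generating pseudometric, and Singh's empirical condition via finitely many test functions corresponds to the basic neighborhoods $\cO(\mathcal F,\eta)$ of \eqref{eq:weak-star-basic}.
\item Quote Singh's identification of the pseudometric entropy with $h^{\mathrm{meas,Singh}}_{\widetilde\Sigma}$ and with the $\operatorname{Hom}_C$ and $\operatorname{UP}_C$ quantities.
\item Deduce the invariance statements. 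Singh's measure entropy is defined intrinsically from the standard pmp action, independently of any topological model, and is a measure-conjugacy invariant. Hence any two compact models of the same action, related by a measure conjugacy, give the same value of $h^{\mathrm{meas}}_\Sigma$. Independence of the metric also follows, or directly from \cref{prop:compact-entropy-invariance}.
\end{enumerate}

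The main obstacle is step 2 and step 3: matching normalizations and conventions precisely enough that Singh's results apply verbatim. Three points need checking. First, his microstates are globally defined maps, and the completion \eqref{eq:completion} is Borel but not continuous. Second, the $L^2$ versus $L^1$ metrics and the separate tolerances were already reconciled in \cref{lem:error-comparison,lem:l1-l2-separation} and by the cofinality remark. Third, the exponent normalization is by $\vol_i(M_i)$ in both settings. I would first read off precisely which axioms Singh's Theorem~3.4.2 uses and confirm them against \cref{lem:completion-is-singh}, handling the Borel (rather than continuous) completion by the remark that the comparison theorem is insensitive to the completion off $M_i[g]$.
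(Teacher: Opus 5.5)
Your proposal is correct and follows the paper's route: the first equality is \cref{thm:identification-bowen} for the completed approximation $\widetilde\Sigma$, and the remaining equalities and the measure-conjugacy invariance are quoted from Singh's Theorems~3.2.3, 3.3.2, 3.4.2, Proposition~3.4.3 and Definition~11, using nondiscreteness. The hypothesis checks you list (admissibility of $\widetilde\Sigma$, the $L^1$ versus $L^2$ comparison, insensitivity to the Borel completion) are already supplied by \cref{lem:completion-is-singh,lem:error-comparison,lem:l1-l2-separation} and the closing remark in the proof of \cref{thm:identification-bowen}, which the paper relies on in the same way.
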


\begin{proof}
	The first equality is \cref{thm:identification-bowen}. Singh's
	Theorem~3.4.2 identifies the unrestricted and $L^2$-bounded pseudometric
	formulas for nondiscrete groups. Proposition~3.4.3 identifies the latter
	with the unital-homomorphism formula, while Theorem~3.3.2 and
	Definition~11 identify the homomorphism and unital-positive-map formulas and
	define their common value to be the measure entropy of the action. The
	independence of the generating data proved in Chapter~3 makes this common
	value a measure-conjugacy invariant; see
	\cite[Theorems~3.2.3, 3.3.2, 3.4.2, Proposition~3.4.3, and
	Definition~11]{SinghThesis}.
\end{proof}

\begin{remark}\label{rem:scope-comparison}

	\Cref{thm:identification-bowen} compares the local formula for
	$\Sigma$ with Singh's formula for the particular completion
	$\widetilde\Sigma$ built from $\Sigma$; it does not compare entropies
	attached to two unrelated sofic approximation sequences.  Compactness of the
	target is used to work directly in Singh's spatial framework.  Since every
	standard pmp action of an lcsc group has a compact metrizable continuous
	model, no noncompact-target comparison is needed for the present theorem.

\end{remark}

\end{document}